\pgfplotsset{compat=1.18}
\newtheorem{theorem}{Theorem}%  meant for continuous numbers
\newtheorem{assumption}{Assumption}%
\newtheorem{lemma}{Lemma}%
\newtheorem{definition}{Definition}%
\newcommand{\llbrace}{\lbrace\hspace{-0.15cm}\lbrace}
\newcommand{\rrbrace}{\rbrace\hspace{-0.15cm}\rbrace}
\newcommand{\norm}[1]{\| #1\|}
\newcommand{\trinorm}[1]{{\vert\kern-0.25ex\vert\kern-0.25ex\vert #1 \vert\kern-0.25ex\vert\kern-0.25ex\vert}}
\newcommand*\circled[1]{\tikz[baseline=(char.base)]{
            \node[shape=circle,draw,inner sep=2pt] (char) {#1};}}
\begin{document}

\title{Polytopal discontinuous Galerkin methods for low-frequency poroelasticity coupled to unsteady Stokes flow}

\author[$\star$]{Michele Botti}
\author[$\star$]{Ivan Fumagalli}
\author[$\star$]{Ilario Mazzieri}

\affil[$\star$]{MOX, Laboratory for Modeling and Scientific Computing, Dipartimento di Matematica, Politecnico di Milano, Piazza Leonardo da Vinci 32, I-20133 Milano, Italy}

\affil[ ]{\texttt {\{michele.botti,ivan.fumagalli,ilario.mazzieri\}@polimi.it}}

\maketitle

\noindent{\bf Keywords }: Multiphysics; Polygonal and
polyhedral meshes; Stability and convergence analysis; Wave propagation.
	
\begin{abstract}
We focus on the numerical analysis of a polygonal discontinuous Galerkin scheme for the simulation of the exchange of fluid between a deformable saturated poroelastic structure and an adjacent free-flow channel. We specifically address wave phenomena described by the low-frequency Biot model in the poroelastic region and unsteady Stokes flow in the open channel, possibly an isolated cavity or a connected fracture system. The coupling at the interface between the two regions is realized by means of transmission conditions expressing conservation laws. 
The spatial discretization hinges on the weak form of the two-displacement poroelasticity system and a stress formulation of the Stokes equation with weakly imposed symmetry. We present a complete stability analysis for the proposed semi-discrete formulation and derive a-priori hp-error estimates.
\end{abstract}

\maketitle

\section{Introduction}\label{sec1}
This paper presents a new discontinuous Galerkin formulation for the numerical solution of the dynamic Stokes–Biot problem, which models the interaction between the free flow of an incompressible fluid and its interaction with a deformable porous medium. This coupled phenomenon, known as fluid–poroelastic structure interaction, has gained significant attention in recent years due to its wide range of applications. These include geomechanical modeling, hydrogeology, environmental science, and biomedical engineering. Specific examples include predicting and managing gas and oil extraction processes from fractured reservoirs, groundwater flow cleanup in deformable aquifers, industrial filter design, and modeling blood-vessel interactions in blood flow. 

The unsteady Stokes equations govern the fluid dynamics, while the low-frequency poroelasticity system describes the wave propagation in the deformable saturated porous medium. The Stokes and Biot regions are coupled through transmission conditions at the interface that enforce the continuity of normal flux, the Beavers–Joseph–Saffman (BJS) slip condition with friction for the tangential velocity, the stress balance, and the continuity of normal stress.

The Stokes–Biot systems, including Stokes-Darcy flows or fluid-structure interaction problems have been widely studied in the literature, see, e.g. \cite{Discacciati2002, Riviere2005, Ervin2009, Gatica2009, Vassilev2014, Richter2017}. 
The first mathematical analysis of the Stokes–Biot system appeared in \cite{Showalter2005}, where a fully dynamic model was reformulated as a parabolic system to demonstrate well-posedness. A numerical investigation was presented in \cite{badia2009coupling}, where the Navier–Stokes equations were used for free fluid flow, and a variational multiscale finite element method was proposed, offering both monolithic and iterative partitioned solutions. In \cite{Bukac2015c}, a non-iterative operator splitting scheme was introduced for an arterial flow model featuring a thin elastic membrane between two regions, utilizing a pressure-based formulation for flow in the poroelastic domain. The work in \cite{Bukac2015b} considered a mixed Darcy model within the Biot system, employing Nitsche’s method to weakly enforce the continuity of normal flux, while \cite{Ambartsumyan2018} introduced a Lagrange multiplier formulation for imposing this continuity. A dimensionally reduced Brinkman–Biot model for fracture flow in poroelastic media was developed and analyzed in \cite{Bukac2017}. 
Well-posedness for the fully dynamic Navier–Stokes/Biot system, using a pressure-based Darcy formulation, was established in \cite{Cesmelioglu2017}. Coupling the Stokes–Biot system with transport processes was explored in \cite{Ambartsumyan2019b}, and a second-order decoupling scheme for a nonlinear Stokes–Biot model was developed in \cite{Kunwar2020}.
In the recent years, a variety of discretization techniques have been introduced for the Stokes–Biot system, including  mixed finite element methods \cite{Wen2020,LiYotov2022}, a staggered finite element method \cite{bergkamp2020staggered}, and a non-conforming finite element method \cite{Wilfrid2020}.

This paper presents and analyzes a new polygonal discontinuous Galerkin scheme (PolydG) for the unsteady Stokes-Biot system. 
In the poroelastic domain, we address wave propagation using the low-frequency Biot model written in the so-called two-displacement formulation \cite{AntoniettiMazzieriNatipoltri2021}. On the other hand, in the fluid domain, we consider the stress formulation of the Stokes equation, similar to \cite{cancrini2024}, with weakly-imposed symmetry. This choice is suggested by the transmission conditions at the interface between the two domains, expressing conservation laws in terms of relations between stress and flow. 
In particular, this strategy allows to avoid both the introduction of a Lagrange multiplier unknown to enforce the coupling as done, e.g., in \cite{Ambartsumyan2018}, and additional penalty terms at the interface as in \cite{AntoniettiBottiMazzieri2023}.
PolydG discretization have been applied successfully to several studies addressing different problem classes such as: second-order elliptic problems \cite{CangianiDongGeorgoulisHouston_2017} and references therein, parabolic differential equations \cite{Cangiani2017}, flows in fractured porous media \cite{Antonietti2019}, fluid-structure interaction problems \cite{AntoniettiVerani2019}, elastodynamics \cite{AntoniettiMazzieri2018}, nonlinear sound waves \cite{AntoniettiMazzieri2020}, coupled wave propagation problems \cite{Antonietti2020_CMAME,AntoniettiBonaldi2020, AntoniettiMazzieriNatipoltri2021,Antonietti2022_VJM,AntoniettiBottiMazzieri2023}, thermo-elasticity in \cite{Antonietti2023_SISC,bonetti2023numerical}, and multi-physics brain modeling in \cite{Corti2023_M3AS,Corti2023,fumagalli2024polytopal}.

We organize the rest of the paper. In Section \ref{sec:model-problem} we present the mathematical model which includes the derivation of the stress formulation of the Stokes problem and  the continuous weak formulation. The spatial discretization with the PolydG method is addressed in Section \ref{sec:polydg_discr}
together with its stability and error analysis. 
Time integration and numerical experiments are presented in Section \ref{sec:time_int} and Section \ref{sec:nume_res}, respectively. Finally, in Section \ref{sec:conc} we draw some conclusions.

\section{The physical model and governing equations}\label{sec:model-problem}
We introduce the dynamic poroelasticity model, the unsteady Stokes problem, and the transmission conditions describing the interaction between the two systems. Then, we derive the variational formulation of the coupled problem and investigate its well-posedness. 
We start this section by introducing some instrumental notation.

\subsection{Notation}

Let $\Omega \subset \mathbb{R}^d$, $d=2,3$ be an open, convex polygonal domain decomposed as the union of two disjoint, polygonal subdomains, i.e., $\Omega  = \Omega _p\cup \Omega _f$, representing the poroelastic and the fluid domains, respectively. The two subdomains share part of their boundary, resulting in the interface $\Gamma_I = \partial \Omega_p \cap  \partial \Omega_f$. The Lipschitz boundary of $\Omega$  is denoted by $\partial \Omega = \Gamma_{p} \cup \Gamma_{f}$, where $\Gamma_{\diamond} = \partial \Omega_\diamond \setminus \Gamma_I$, for $\diamond = \{p,f\}$,  being  $\partial \Omega_\diamond$ the union of two disjoint portion $\Gamma_\diamond^D$ and $\Gamma_\diamond^N$ with positive measure and where Dirichlet and Neumann conditions are imposed, respectively. 
The outer unit normal vectors to $\partial \Omega_p$ and $\partial \Omega_f$ are denoted by $\bm n_p$ and $\bm n_f$, respectively, so that $\bm n_f$ = -$\bm n_p$ on $\Gamma _I$.

In the following, for a simply connected $X \subset \Omega$ and $\ell \geq 0$, the notation $\bm{H}^\ell(X)$ will be employed in place of $[H^\ell(X)]^d$ for vector valued Sobolev spaces, assuming by convention that $\bm{H}^0(X) \equiv \bm{L}^2(X)$. Moreover, we will denote with $(\cdot, \cdot)_X$ the scalar product in $\bm{L}^2(X)$ and with $||\cdot||_X$ the associated norm.
In addition, we will use $\bm{H}({\rm div},X)$ to denote the space of $\bm{L}^2(X)$ functions with square integrable divergence. A similar notation will be adopted for tensor-valued functions, i.e. $\mathbb{L}^2(X)$ and $\mathbb{H}({\rm div},X)$ stand for $[\bm{L}^2(X)]^d$ and $[\bm{H}({\rm div},X)]^d$, respectively.
For a given final time $T>0$, $k\in\mathbb{N}$, and a Hilbert space $H$, the usual notation $C^k([0,T];H)$ is adopted for the space of $H$-valued functions, $k$-times continuously differentiable in $[0,T]$. 
The notation $x\lesssim y$ stands for $x\leq C y$, with $C>0$, independent of the discretization parameters, but possibly dependent on the physical coefficients and the final time $T$.

In the following, for tensor fields $\bm \tau$ we will use the notation 
\begin{equation*}
{\rm tr}(\bm{\tau}) = \sum_{i=1}^d \tau _{ii}, \quad {\rm dev}(\bm{\tau}) = \bm{\tau} - \frac{1}{d}{\rm tr}(\bm{\tau})\bm{I}, \quad  {\rm skew}(\bm \tau) =  
\frac{\bm{\tau} -\bm{\tau}^{\rm T}}2 
\end{equation*}
to indicate the trace, the deviatoric part, and the skew symmetric part of $\bm \tau$, respectively. 

\subsection{The poroelasto-fluid problem}\label{sec:poroelastic-fluid-model}

In the poroelastic domain $\Omega_p$, for a final observation time $T > 0$, we consider the following Biot equations: 
\begin{equation}\label{eq:biot-system}
   \begin{cases} 
    \rho \ddot{\bm{u}}_p + \rho _f \ddot{\bm{w}}_p - 
    \nabla \cdot \bm{\sigma}_p  = \bm{f}_p, 
     & in  \; \Omega _p \times (0,T], \\
    \rho_f \ddot{\bm{u}}_p + \rho_w \ddot{\bm{w}}_p + \frac{\eta}{\kappa} \dot{\bm{w}}_p + \nabla p_p = \bm{g}_p, 
 & in \;  \Omega _p \times (0,T],\\
  \bm u_p = \bm 0, & on \;  \Gamma_p^D \times (0,T], \\
  \bm w_p \cdot \bm n_p = 0, & on \;  \Gamma_p^D \times (0,T], \\
  \bm \sigma_p \bm n_p = \bm g_p^N, & on \;  \Gamma_p^N \times (0,T], \\
   p_p = q_p^N, & on \;  \Gamma_p^N \times (0,T], \\
  \bm u_p = \bm u_{p0}, \quad \dot{\bm u}_p = \bm v_{p0}, & in \;  \Omega_p \times \{0\}, \\
  \bm w_p = \bm w_{p0}, \; \; \dot{\bm w}_p = \bm z_{p0}, & in \;  \Omega_p \times \{0\},  
    \end{cases}
\end{equation}
 where $\bm{u}_p$ is the solid and $\bm w_p $ is the filtration displacement, respectively.
In \eqref{eq:biot-system}, the average density $\rho$  is given by $\rho  = \phi \rho _f +(1 - \phi )\rho _s$, where $\rho _s > 0 $ is the solid density, $\rho _f > 0$ is the saturating fluid density, and $\rho _w$ is defined as $\rho _w = \frac{a}{\phi} \rho _f$, with $\phi$ being the porosity satisfying $0 < \phi _0 \leq  \phi  \leq  \phi _1 < 1$ and $a \geq 1$ the tortuosity measuring the deviation of the fluid paths from straight streamlines.
In \eqref{eq:biot-system}, $\eta  > 0$ represents the dynamic viscosity of the fluid, $k > 0$ is the absolute permeability, $\bm{f}_p, \bm{g}_p, \bm{g}_p^D$ and $q_p^N$ are given (regular enough) loading and source terms, respectively, and $\bm u_{p0}, \bm v_{p0}, \bm w_{p0}$, and $\bm z_{p0}$ are regular enough given initial conditions.
In $\Omega _p$, we assume the following constitutive laws which allow to express the pore pressure $p_p$ and stress tensor $\bm{\sigma}_p$ in terms of the two displacements $\bm{u}$ and $\bm{w}$:
\begin{equation}\label{eq:constitutive-poro}
p_p(\bm{u},\bm{w}) = -m (\beta \nabla \cdot \bm{u} + \nabla \cdot \bm{w}),  \qquad 
\bm{\sigma}_p  (\bm{u},\bm{w}) = \bm \sigma_e(\bm u)  - \beta p_p(\bm{u},\bm{w}) \bm{I},  
\end{equation}
where the elastic stress $\bm \sigma_e(\bm u) = \mathbb{C} \bm \varepsilon(\bm u)  = 2\mu \bm{\varepsilon} (\bm{u}) + \lambda (\nabla \cdot \bm{u}) \bm{I}
$, being $\mathbb{C}$ the stiffness tensor and $\bm \varepsilon (\bm{u}) = \frac{1}{2} (\nabla \bm{u} + {\nabla \bm{u}}^T)$ the strain tensor (symmetric gradient) . In \eqref{eq:constitutive-poro}, $\lambda  \geq  0$ and $\mu  \geq  \mu_0 > 0$ are the Lam\'e coefficients of the elastic skeleton. The Biot--Willis coefficient $\beta$  and Biot modulus $m$ are such that $\phi  < \beta  \leq  1$ and $m \geq  m_0 > 0$. 

In the fluid domain $\Omega _f$, we consider a free incompressible viscous fluid with mass density $\rho _f > 0$ and dynamic viscosity $\mu _f > 0$. Assuming that the fluid viscosity is sufficiently high, % (so that the convective term is negligible),
the Stokes' system of equations governs the fluid flow: 
\begin{equation}\label{eq:stokes_up}
    \begin{cases}
         \dot{\bm{u}}_f -  \rho_f^{-1}\nabla \cdot \bm \sigma_f
         = \bm{h}_f,  
        &  in  \;  \Omega _f \times (0,T], \\
        \nabla \cdot \bm{u}_f = 0, & 
         in \; \Omega_f \times (0,T], \\
        \bm{u}_f = \bm{g}_f^D, &
        on \; \Gamma^D_f, \times (0,T],\\
        \bm{\sigma}_f \bm n_f = \bm{g}_f^N, &
        on \; \Gamma^N_f \times (0,T],\\
        \bm{u}_f = \bm{u}_{f0}, &
        in \; \Omega_f \times \{0\} ,      
    \end{cases}
\end{equation}
where $\bm{u}_f$ and $p_f$ are the fluid velocity and pressure, respectively, $\bm \sigma_f  = 2 \mu _f \bm   \varepsilon (\bm{u}_f) - p_f \bm I$ is the fluid stress tensor and $\bm{h}_f$ is (a regular enough) body force per unit mass exerted on the fluid. In \eqref{eq:stokes_up}, $\bm g_f^D, \bm g_f^N$ and $\bm u_{f0}$ are regular enough boundary and initial conditions, respectively.  
The first equation in \eqref{eq:stokes_up} represents the conservation of total momentum of the flow, while the second the mass conservation.
In this paper, we consider a different formulation of the Stokes problem, which is more convenient to accurately represent the momentum conservation and formulate the coupling conditions that will be discussed later.
To this aim, we define 
\begin{equation}\label{eq:def-sigma}
    \bm{\Sigma}_f(t) =  \int_{0}^{t} 
    \bm \sigma_f(s) \, ds,
\end{equation}
and integrate in time over $(0,t)$ the first equation in \eqref{eq:stokes_up} to get 
\begin{equation}\label{eq:1st-eq-rewrite}
 \bm{u}_f(t)  - \rho_f^{-1} \nabla \cdot \bm{\Sigma}_f(t) =  \int_{0}^t \bm{h}_f(s) \,ds + \bm{u}_{f0}.
\end{equation}
Using now  definition \eqref{eq:def-sigma} we can infer that
\begin{equation}\label{eq:dev}
    \frac{1}{2\mu_f} {\rm dev}(\dot{\bm \Sigma}_f) = \bm\varepsilon (\bm{u}_f),
\end{equation}
which directly encodes the incompressibility constraint (second equation) in \eqref{eq:stokes_up}.
Next, by introducing the rotation $\bm  r_f =  \nabla \bm{u}_f - \bm{\varepsilon} (\bm{u}_f) $ and combining \eqref{eq:1st-eq-rewrite} and \eqref{eq:dev} we get 
\begin{equation}\label{eq:stokes-sigma}
    \begin{cases}
        (2\mu_f)^{-1} {\rm dev}(\dot{\bm \Sigma}_f)  -  \nabla (\rho _f^{-1} \nabla \cdot \bm{\Sigma}_f) +\bm r_f = \bm F_f ,
        & in  \; \Omega_f \times (0,T],\\ 
        {\rm skew} (\dot{\bm \Sigma}_f) =  \bm 0, & in  \; \Omega_f \times (0,T],\\
       \rho _f^{-1}\nabla \cdot \bm{\Sigma}_f = \bm G_f,  &   on  \; \Gamma_{f}^D \times (0,T],  \\
       \bm{\Sigma}_f \bm n_f = \int_0^t \bm g_f^N(s) \, ds,&
        on \; \Gamma^N_f \times (0,T],\\
        \bm \Sigma_f = \bm 0, &   in  \; \Omega_{f} \times \{0\},
    \end{cases}
\end{equation}
$$
\text{with }
\bm F_f = \nabla\left(\int_{0}^t \bm{h}_f(s) \,ds + \bm{u}_{f0}\right) \;\text{ and }\; 
\bm G_f = \bm g_f^D -\left(\int_{0}^t \bm{h}_f(s) \,ds - \bm{u}_{f0}\right)_{|\partial\Omega_f}.
$$

The poroelastic-fluid coupling is achieved by imposing interface conditions that must account for the conservation of mass and overall momentum. Therefore, these conditions will encompass the continuity of both the normal fluid flux and the stress. Two additional constitutive relations are involved: one describes the relationship between the filtration velocity and the pressure increment, the other addresses the impact of the tangential stress component on the velocity increment. The former is the Robin boundary condition, while the  latter is the Beavers-Joseph-Saffman (BJS) slip condition.
Hence,  on $ \Gamma _I \times (0,T]$ we impose: 
\begin{equation}\label{eq:coupling-conditions}
    \begin{cases}
        (\alpha \dot{\bm{u}}_p + \dot{\bm{w}}_p) \cdot \bm{n}_p = \bm{u}_f \cdot \bm{n}_p, & ({\rm flux \, conservation}), \\
        \dot{\bm{\Sigma}}_f \bm{n}_p \cdot \bm{n}_p = 
        \gamma \dot{\bm{w}}_p \cdot \bm{n}_p - p_p, & ({\rm Robin \, condition}), \\
        \alpha \dot{\bm{\Sigma}}_f \bm{n}_p \cdot \bm{n}_p
        = \bm{\sigma}_p \bm{n}_p \cdot \bm{n}_p, & ({\rm normal \, stress \, conservation}), \\
        \dot{\bm{\Sigma}}_f \bm{n}_p \wedge \bm{n}_p = \bm{\sigma}_p \bm{n}_p \wedge \bm{n}_p = 
        \delta (\bm{u}_f - \dot{\bm{u}}_p) \wedge \bm{n}_p, & ({\rm BJS \, condition}).
    \end{cases}
\end{equation}
Here $\alpha > 0$ is a coefficient related to the fraction of the contact surface $\Gamma _I$ where the diffusion paths of the porous medium are exposed to the fluid in the open channel, 
$\gamma \geq 0$ is the fluid entry resistance and 
$\delta >0$ depends on the slip rate coefficient and the conductivity tensor.
Furthermore, for a vector field $\bm v$ defined on $\Gamma_I$,  the notation $\bm{v}\wedge \bm n_p$ denotes the tangential component of $\bm v$. In the case $d=2$, we have $\bm v\wedge \bm n_p=\bm v\cdot \bm t_p$, where $\bm{t}_p$ is the tangential unit vector to the interface $\Gamma _I$, directed in such a way that the angle measured from $\bm{t}_p$ to $\bm{n}_p$ is positive. 

Finally, the poroelastic-fluid interaction problem
is obtained by combining \eqref{eq:biot-system} and \eqref{eq:stokes-sigma} with conditions \eqref{eq:coupling-conditions}.

\subsection{Weak Formulation}
For the sake of presentation, in the following, we will consider $\bm g_p^N = \bm 0$, $q_p^N = 0$ on $\Gamma_p^N$  in \eqref{eq:biot-system} and $\bm g_f^D = \bm g_f^N = \bm 0$ on $ \Gamma_f^N$ in  \eqref{eq:stokes-sigma}. The general case can be treated analogously. 
We introduce the Sobolev space 
$$
\bm V = \bm H^1_{0,\Gamma_p^D}(\Omega_p) \times \bm H_{0,\Gamma_p^D}({\rm div},\Omega_p) \times \mathbb{H}_{0,\Gamma_f^N}({\rm div},\Omega_f) \times [L^2(\Omega_f)]^{d^*},
$$
with $d^* = 1$ for $d=2$ and $d^* = 3$ for $d=3$ corresponding to the dimension of skew-symmetric matrices in $\mathbb{R}^{d\times d}$.
Then, the weak formulation of Biot-Stokes reads as follows: $\forall t \in (0,T]$, find $(\bm u_p, \bm w_p, \bm{\Sigma}_f, \bm r_f)(t) \in \bm V$ such that $\forall (\bm{v},\bm{z},\bm{\tau}, \bm \lambda)\in \bm V$ it holds
\begin{multline}\label{eq:weak-form}
\mathcal{M}^p((\ddot{\bm u}_p,\ddot{\bm{w}}_p),(\bm v,\bm z)) + 
\mathcal{M}^f(\dot{\bm{\Sigma}_f},\bm{\tau})
+ \mathcal{D}^p(\dot{\bm{w}}_p, \bm z) +  \mathcal{D}^f(\dot{\bm{\Sigma}}_f, \bm \tau) \\ + \mathcal{A}^p((\bm{u}_p,\bm{w}_p),(\bm{v},\bm{z})) 
+ \mathcal{A}^f(\bm{\Sigma}_f, \bm{\tau}) + \mathcal{B}^f(\bm r_f, \bm \tau) - \mathcal{B}^f(\bm \lambda, \dot{\bm \Sigma}_f) \\
+ \mathcal{C}^{pf} ((\dot{\bm{u}}_p,\dot{\bm{w}}_p),\bm{\tau})  - \mathcal{C}^{fp} (\dot{\bm{\Sigma}_f},(\bm{v},\bm{z})) = \mathcal{F}(\bm{v},\bm{z},\bm{\tau}) 
\end{multline}
with initial conditions given as in \eqref{eq:biot-system} and $\eqref{eq:stokes-sigma}$ and where for any $\bm u, \bm v \in  \bm H^1_{0,\Gamma_p^D}(\Omega_p)  $, $\bm w, \bm z \in \bm H_{0,\Gamma_p^D}({\rm div},\Omega_p) $, $\bm \Sigma, \bm \tau \in \mathbb{H}_{0,\Gamma_f^N}({\rm div},\Omega_f)$ and $\bm r, \bm \lambda \in [L^2(\Omega_f)]^{d^*}$ we have 
\begin{align}\label{eq:bilinear_forms}
\mathcal{M}^p((\bm u,\bm w),(\bm v,\bm z)) & =(\rho \bm u,\bm v)_{\Omega_p} + (\rho_f \bm w,\bm v)_{\Omega_p} + (\rho_f \bm u,\bm z)_{\Omega_p} + (\rho_w \bm w,\bm z)_{\Omega_p}, \nonumber \\[1mm]
\mathcal{M}^f(\bm \Sigma, \bm \tau) & = ((2\mu_f)^{-1} {\rm{dev}}(\bm{\Sigma}),{\rm{dev}}(\bm\tau))_{\Omega_f},  \nonumber \\[1mm]
\mathcal{D}^p(\bm w, \bm z) & = (\eta k^{-1}\bm w ,\bm z )_{\Omega_p} +  \langle\gamma \bm{w} \cdot \bm{n}_p, \bm{z} \cdot \bm{n}_p\rangle_{\Gamma_I},  \nonumber \\[1mm]
\mathcal{D}^f(\bm \Sigma, \bm \tau) & = (\delta^{-1} \bm{\Sigma} \bm{n}_p \wedge \bm{n}_p, \bm{\tau} \bm{n}_p \wedge \bm{n}_p )_{\Gamma_I},  \nonumber \\[1mm]
\mathcal{A}^p((\bm{u},\bm{w}),(\bm{v},\bm{z})) & = (\bm{\sigma}_e(\bm{u}), \bm{\varepsilon}(\bm{v}))_{\Omega_p}  
+ (m(\beta \nabla \cdot \bm{u} + \nabla \cdot \bm{w}), \beta \nabla \cdot \bm{v} + \nabla \cdot \bm{z})_{\Omega_p},   \\[1mm]
\mathcal{A}^f(\bm{\Sigma},\bm{\tau}) & = (\rho_f^{-1} \nabla \cdot \bm{\Sigma},\nabla \cdot \bm{\tau})_{\Omega_f},  \nonumber
\\[1mm]
\mathcal{B}^f(\bm r, \bm \tau) & = (\bm r, {\rm skew}(\bm \tau))_{\Omega_f},  \nonumber
\\[1mm]
C^{pf}((\bm{u},\bm{w}),\bm{\tau})  & = \langle(\alpha \bm{u}+\bm{w}) \cdot \bm{n}_p, \bm{\tau} \bm{n}_p \cdot \bm{n}_p\rangle_{\Gamma_I} + \langle\bm{u} \wedge \bm{n}_p, \bm{\tau} \bm{n}_p \wedge \bm{n}_p\rangle_{\Gamma_I},  \nonumber \\[1mm]
C^{fp}(\bm{\Sigma},(\bm{v},\bm{z}))  & = \langle\bm{\Sigma} \bm{n}_p \cdot \bm{n}_p, (\alpha \bm{v} + \bm{z}) \cdot \bm{n}_p\rangle_{\Gamma_I} + \langle\bm{\Sigma} \bm{n}_p \wedge \bm{n}_p, \bm{v} \wedge \bm{n}_p\rangle_{\Gamma_I},  \nonumber\\[1mm]
\mathcal{F}(\bm{v},\bm{z},\bm{\tau})  & = 
(\bm{f}_p,\bm{v})_{\Omega_p}+(\bm{g}_p,\bm{z})_{\Omega_p}+
(\bm F_f,\bm{\tau})_{\Omega_f} - \langle \bm G_f, \bm \tau \bm n_f \rangle_{\Gamma_I\cup\Gamma_f^D},  \nonumber
\end{align}
where $\langle\cdot,\cdot\rangle_{\Gamma_I}$ denotes the $H^{\frac{1}{2}}(\Gamma_I)$ - $H^{-\frac{1}{2}}(\Gamma_I)$ duality product. We also remark that, according to the assumption $\bm g_f^D = \bm 0$ and integration by parts, we can rewrite the third and fourth term appearing in the definition of the linear functional $\mathcal{F}$ to obtain the alternative expression
$$
\mathcal{F}(\bm{v},\bm{z},\bm{\tau}) = 
(\bm{f}_p,\bm{v})_{\Omega_p}+(\bm{g}_p,\bm{z})_{\Omega_p}-
\left(\int_0^t\bm{h}_f(s)\ ds + \bm u_{f0},\nabla\cdot\bm{\tau}\right)_{\Omega_f}.
$$

%\begin{remark}\todo{Ivan: toglierei questo remark}
%    Formulation \eqref{eq:weak-form} is obtained after: (i) multiplication of the first two equations in \eqref{eq:biot-system} by $\bm v \in \bm H^1_{0,\Gamma_p^D}(\Omega_p)$ and $\bm z \in \bm H_{0,\Gamma_p^D}({\rm div},\Omega_p)$, respectively, integration by parts in $\Omega_p$ and use of the second, third and fourth conditions in \eqref{eq:coupling-conditions}; (ii) multiplication of the first two equations in \eqref{eq:stokes-sigma} by $\bm \tau \in \bm H_{0,\Gamma_f^N}({\rm div},\Omega_f)$ and $\bm \lambda \in \bm L^2(\Omega_f)$, respectively, integration by parts in $\Omega_f$ and use of the first and fourth conditions in \eqref{eq:coupling-conditions} together with \eqref{eq:1st-eq-rewrite}. %\end{remark}

\subsection{Stability analysis}

This section presents the stability analysis for the continuous problem \eqref{eq:weak-form}. The arguments in the proof of the main theorem will also be used in the discrete setting in Section \ref{sec:polydg_discr}. For any $(\bm u, \bm w, \bm \Sigma) \in C^1([0,T];\bm H^1_{0,\Gamma_p^D}(\Omega_p) \times \bm H_{0,\Gamma_p^D}({\rm div},\Omega_p) \times \mathbb{H}_{0,\Gamma_f^N}({\rm div},\Omega_f))$ we introduce the energy norms
\begin{align*}
\| (\bm u, \bm w, \bm \Sigma) (t)\|^2_{\mathcal{E}}  &=    \| (\bm u, \bm w)(t) \|^2_{\mathcal{E}_p} + \| \bm \Sigma (t) \|^2_{\mathcal{E}_f},\\
    \| (\bm u, \bm w)(t) \|^2_{\mathcal{E}_p} & =  \|  \dot{\bm u}(t) \|^2_{\Omega_p} +  \|  \dot{\bm w}(t) \|^2_{\Omega_p} +  \| (\eta/ k)^{\frac12} {\bm{w}} (t) \|^2_{\Omega_p}   \\ & \quad + \|\gamma^\frac12 {\bm{w}} \cdot \bm{n}_p(t)\|_{\Gamma_I}^2
  +  \| \mathbb{C}^\frac12 \bm \varepsilon(\bm u) (t) \|_{\Omega_p}^2 + \| m^\frac12 \nabla \cdot  (\beta \bm u + \bm w)(t) \|_{\Omega_p}^2,  \\
\| \bm \Sigma (t) \|^2_{\mathcal{E}_f} & = 
  \| (2\mu_f)^{-\frac12} {\rm dev}({\bm{\Sigma}}) (t)\|^2_{\Omega_f} 
 +  \|\rho_f^{-\frac12} \nabla \cdot \bm \Sigma (t)\|^2_{\Omega_f}  +  \| \delta^{-\frac12} {\bm{\Sigma}} \bm{n}_p \wedge \bm{n}_p (t)\|_{\Gamma_I}^2.
\end{align*}

\begin{lemma}\label{lemma:stab-poro}
The bilinear forms $\mathcal{M}^p$, $\mathcal{A}^p$, and $\mathcal{D}^p$ defined in \eqref{eq:bilinear_forms} are such that for any $\bm u, \bm v \in \bm H^1_{0,\Gamma_p^D}(\Omega_p)$ and any $ \bm w, \bm z \in \bm H_{0,\Gamma_p^D}({\rm div},\Omega_p)$ it holds
\begin{align}
\mathcal{M}^p((\bm{u},\bm{w}),(\bm{v},\bm{z})) & \lesssim (\| \bm{u} \|_{\Omega_p} + \| \bm{w} \|_{\Omega_p})( \| \bm{v} \|_{\Omega_p} + \| \bm{z} \|_{\Omega_p}), \label{eq:M-cont}\\ 
\mathcal{M}^p((\bm{u},\bm{w}),(\bm{u},\bm{w})) & \gtrsim  \|\bm{u}\|_{\Omega_p}^2 + \|\bm{w}\|_{\Omega_p}^2,  \label{eq:M-coer} \\
\mathcal{A}^p((\bm{u},\bm{w}),(\bm{v},\bm{z})) + \mathcal{D}^p(\bm{w},\bm{z}) & \lesssim 
\|\bm u\|_{1,\Omega_p} \| \bm v \|_{1,\Omega_p} + \|\bm w\|_{{\rm div},\Omega_p}\|\bm z\|_{{\rm div},\Omega_p}, \label{eq:A-cont}\\
\mathcal{A}^p((\bm{u},\bm{w}),(\bm{u},\bm{w})) + \mathcal{D}^p(\bm{w},\bm{w}) & \gtrsim 
\| \bm u \|_{1,\Omega_p}^2 + \| \bm w\|_{{\rm div}, \Omega_p}^2. \label{eq:A-coer}
\end{align}
\end{lemma}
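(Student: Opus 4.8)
The plan is to prove the four bounds separately: \eqref{eq:M-cont} and \eqref{eq:A-cont} are routine Cauchy--Schwarz estimates, \eqref{eq:M-coer} follows from a uniform positive-definiteness property of the density matrix, and \eqref{eq:A-coer} is the only step requiring a genuine argument.

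\emph{Continuity.} For \eqref{eq:M-cont} I would apply Cauchy--Schwarz term by term in $\mathcal{M}^p$, bound $\rho,\rho_f,\rho_w$ by their finite suprema over the admissible parameter range, and collect the four contributions using $ab+cd\le(a+c)(b+d)$ for nonnegative reals. For \eqref{eq:A-cont} the elastic term is controlled by $\|\bm\sigma_e(\bm u)\|_{\Omega_p}\|\bm\varepsilon(\bm v)\|_{\Omega_p}\lesssim\|\bm u\|_{1,\Omega_p}\|\bm v\|_{1,\Omega_p}$ using boundedness of $\mathbb{C}$ and $\|\bm\varepsilon(\cdot)\|_{\Omega_p}\le\|\nabla(\cdot)\|_{\Omega_p}$; the Biot coupling term is bounded, after a triangle inequality, by $m\,(\|\nabla\cdot\bm u\|_{\Omega_p}+\|\nabla\cdot\bm w\|_{\Omega_p})(\|\nabla\cdot\bm v\|_{\Omega_p}+\|\nabla\cdot\bm z\|_{\Omega_p})$ and hence by the product $(\|\bm u\|_{1,\Omega_p}+\|\bm w\|_{{\rm div},\Omega_p})(\|\bm v\|_{1,\Omega_p}+\|\bm z\|_{{\rm div},\Omega_p})$, since $\|\nabla\cdot(\cdot)\|_{\Omega_p}\lesssim\|\cdot\|_{1,\Omega_p}$ for the solid displacement and $\|\nabla\cdot(\cdot)\|_{\Omega_p}\le\|\cdot\|_{{\rm div},\Omega_p}$ for the filtration displacement; finally the volumetric part of $\mathcal{D}^p$ is immediate, and its interface part is handled by Cauchy--Schwarz on $\Gamma_I$ together with the continuity of the normal trace on $\bm H({\rm div},\Omega_p)$ (this is where $\gamma$ enters the hidden constant).

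\emph{Coercivity of $\mathcal{M}^p$.} Pointwise, $\mathcal{M}^p((\bm u,\bm w),(\bm u,\bm w))$ is the $\Omega_p$-integral of the quadratic form of the symmetric $2\times2$ matrix $R$ with entries $R_{11}=\rho$, $R_{12}=R_{21}=\rho_f$, $R_{22}=\rho_w$, acting componentwise on $(\bm u,\bm w)$. I would check that $R$ is uniformly positive definite: inserting $\rho=\phi\rho_f+(1-\phi)\rho_s$ and $\rho_w=\tfrac{a}{\phi}\rho_f$ gives $\det R=(a-1)\rho_f^2+\tfrac{a(1-\phi)}{\phi}\rho_s\rho_f\ge\tfrac{1-\phi_1}{\phi_1}\rho_s\rho_f>0$ (using $a\ge1$ and $0<\phi_0\le\phi\le\phi_1<1$), while ${\rm tr}\,R=\rho+\rho_w$ is bounded above; hence the smallest eigenvalue of $R$ is bounded below by some $c_0>0$ depending only on the data, and \eqref{eq:M-coer} follows.

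\emph{Coercivity of $\mathcal{A}^p+\mathcal{D}^p$.} Since $\gamma\ge0$, the interface contribution to $\mathcal{D}^p(\bm w,\bm w)$ is nonnegative and I simply drop it. From $\mathcal{A}^p$ I keep $(\bm\sigma_e(\bm u),\bm\varepsilon(\bm u))_{\Omega_p}=2\mu\|\bm\varepsilon(\bm u)\|_{\Omega_p}^2+\lambda\|\nabla\cdot\bm u\|_{\Omega_p}^2\ge2\mu_0\|\bm\varepsilon(\bm u)\|_{\Omega_p}^2$ and apply Korn's first inequality (licit because $\bm u$ vanishes on $\Gamma_p^D$, which has positive measure) to obtain $\gtrsim\|\bm u\|_{1,\Omega_p}^2$. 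The volumetric part of $\mathcal{D}^p$ yields $(\eta/k)\|\bm w\|_{\Omega_p}^2\gtrsim\|\bm w\|_{\Omega_p}^2$. It then remains to recover $\|\nabla\cdot\bm w\|_{\Omega_p}^2$: from $m_0\|\beta\nabla\cdot\bm u+\nabla\cdot\bm w\|_{\Omega_p}^2$ together with the triangle inequality $\|\nabla\cdot\bm w\|_{\Omega_p}\le\|\beta\nabla\cdot\bm u+\nabla\cdot\bm w\|_{\Omega_p}+\beta\|\nabla\cdot\bm u\|_{\Omega_p}$ and the already-controlled bound $\|\nabla\cdot\bm u\|_{\Omega_p}\lesssim\|\bm u\|_{1,\Omega_p}$. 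Summing the three pieces gives \eqref{eq:A-coer}. I expect this last reconstruction to be the only real obstacle: the Biot energy only controls the \emph{combined} divergence $\beta\nabla\cdot\bm u+\nabla\cdot\bm w$, so one is forced to first secure $\|\bm u\|_{1,\Omega_p}$ from the elastic term through Korn's inequality and only afterwards peel off $\|\nabla\cdot\bm w\|_{\Omega_p}$; the resulting constants unavoidably degenerate as $\mu_0\to0$ or $m_0\to0$, consistent with the convention that the hidden constant may depend on the physical coefficients.
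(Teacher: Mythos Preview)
Your proof is correct and self-contained; the paper itself does not give an argument here but simply refers to \cite[Lemma~2.3]{AntoniettiMazzieriNatipoltri2021}, and your write-up is precisely the standard proof one finds there (positive-definiteness of the density matrix for \eqref{eq:M-coer}, Korn plus the triangle-inequality reconstruction of $\|\nabla\cdot\bm w\|_{\Omega_p}$ for \eqref{eq:A-coer}).

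One minor remark: for \eqref{eq:A-cont} you obtain the product-of-sums bound $(\|\bm u\|_{1,\Omega_p}+\|\bm w\|_{{\rm div},\Omega_p})(\|\bm v\|_{1,\Omega_p}+\|\bm z\|_{{\rm div},\Omega_p})$, which is actually what the cross terms $m\beta(\nabla\cdot\bm u,\nabla\cdot\bm z)_{\Omega_p}$ and $m\beta(\nabla\cdot\bm w,\nabla\cdot\bm v)_{\Omega_p}$ force; the sum-of-products form displayed in the statement should be read in that spirit. This has no bearing on the downstream stability analysis, where only the qualitative continuity is used.
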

\begin{proof}
    See \cite[Lemma 2.3]{AntoniettiMazzieriNatipoltri2021}. 
\end{proof}

\begin{lemma}\label{lemma:stab-fluid}
The bilinear forms $\mathcal{M}^f$ and $\mathcal{A}^f$ defined in \eqref{eq:bilinear_forms} are such that for any $\bm \Sigma, \bm \tau \in \mathbb{H}_{0,\Gamma_f^N}({\rm div},\Omega_f)$
\begin{align}
\mathcal{M}^f(\bm{\Sigma},\bm{\tau}) + \mathcal{A}^f(\bm{\Sigma},\bm{\tau}) & \lesssim \| \bm \Sigma \|_{{\rm div},\Omega_f}\| \bm \tau \|_{{\rm div},\Omega_f}, \label{eq:AM-cont-fluid}\\ 
\mathcal{M}^f(\bm{\Sigma},\bm{\Sigma}) + \mathcal{A}^f(\bm{\Sigma},\bm{\Sigma})  & \gtrsim 
 \| \bm \Sigma \|_{{\rm div},\Omega_f}^2, \label{eq:AM-coer-fluid}.
\end{align}
\end{lemma}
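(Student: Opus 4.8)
The continuity bound \eqref{eq:AM-cont-fluid} follows at once from the Cauchy--Schwarz inequality applied to the two inner products defining $\mathcal{M}^f$ and $\mathcal{A}^f$, together with the elementary facts that the splitting $\bm\tau = {\rm dev}(\bm\tau) + \tfrac{1}{d}\,{\rm tr}(\bm\tau)\,\bm{I}$ is orthogonal in $\mathbb{L}^2(\Omega_f)$, so that $\|{\rm dev}(\bm\tau)\|_{\Omega_f}\le\|\bm\tau\|_{\Omega_f}$, and that $\|\nabla\cdot\bm\tau\|_{\Omega_f}\le\|\bm\tau\|_{{\rm div},\Omega_f}$. One obtains
$$
\mathcal{M}^f(\bm\Sigma,\bm\tau)+\mathcal{A}^f(\bm\Sigma,\bm\tau)\le \max\{(2\mu_f)^{-1},\rho_f^{-1}\}\Big(\|\bm\Sigma\|_{\Omega_f}\|\bm\tau\|_{\Omega_f}+\|\nabla\cdot\bm\Sigma\|_{\Omega_f}\|\nabla\cdot\bm\tau\|_{\Omega_f}\Big),
$$
and a discrete Cauchy--Schwarz inequality gives \eqref{eq:AM-cont-fluid} with a constant depending only on the strictly positive parameters $\mu_f$ and $\rho_f$.

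The coercivity estimate \eqref{eq:AM-coer-fluid} is the substantive part. Since $(2\mu_f)^{-1}$ and $\rho_f^{-1}$ are bounded below by positive constants, it is enough to establish the coefficient-free inequality
$$
\|\bm\tau\|_{{\rm div},\Omega_f}^2 \;\lesssim\; \|{\rm dev}(\bm\tau)\|_{\Omega_f}^2 + \|\nabla\cdot\bm\tau\|_{\Omega_f}^2 \qquad\text{for all }\bm\tau\in\mathbb{H}_{0,\Gamma_f^N}({\rm div},\Omega_f),
$$
which is the classical inequality underpinning dual-mixed (stress-based) formulations of elasticity and Stokes flow; see, e.g., \cite{cancrini2024} and the references therein. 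Using the orthogonality of the above splitting, $\|\bm\tau\|_{\Omega_f}^2 = \|{\rm dev}(\bm\tau)\|_{\Omega_f}^2 + \tfrac{1}{d}\|{\rm tr}(\bm\tau)\|_{\Omega_f}^2$, so that, the term $\|\nabla\cdot\bm\tau\|_{\Omega_f}$ being already available, the estimate reduces to bounding $\|{\rm tr}(\bm\tau)\|_{\Omega_f}$ in terms of the right-hand side.

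To this end I would rely on the surjectivity of the divergence onto $L^2(\Omega_f)$: since the portion $\partial\Omega_f\setminus\Gamma_f^N$ of the boundary (which contains $\Gamma_f^D$ and $\Gamma_I$) is a proper subset of $\partial\Omega_f$, for $q:={\rm tr}(\bm\tau)\in L^2(\Omega_f)$ there exists $\bm\phi\in\bm{H}^1(\Omega_f)$ with $\bm\phi=\bm 0$ on $\partial\Omega_f\setminus\Gamma_f^N$, $\nabla\cdot\bm\phi=q$, and $\|\bm\phi\|_{1,\Omega_f}\lesssim\|q\|_{\Omega_f}$ (a Bogovskii-type right inverse / inf--sup property of the divergence). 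Integrating the $\mathbb{H}({\rm div},\Omega_f)$ duality $(\nabla\cdot\bm\tau,\bm\phi)_{\Omega_f}$ by parts, the boundary contribution $\langle\bm\tau\bm n_f,\bm\phi\rangle_{\partial\Omega_f}$ vanishes because $\bm\phi=\bm 0$ on $\partial\Omega_f\setminus\Gamma_f^N$ and $\bm\tau\bm n_f=\bm 0$ on $\Gamma_f^N$; decomposing $\bm\tau$ and using ${\rm tr}(\nabla\bm\phi)=\nabla\cdot\bm\phi={\rm tr}(\bm\tau)$ then yields the identity
$$
\tfrac{1}{d}\|{\rm tr}(\bm\tau)\|_{\Omega_f}^2 = -(\nabla\cdot\bm\tau,\bm\phi)_{\Omega_f} - ({\rm dev}(\bm\tau),\nabla\bm\phi)_{\Omega_f}.
$$
Cauchy--Schwarz together with $\|\bm\phi\|_{1,\Omega_f}\lesssim\|{\rm tr}(\bm\tau)\|_{\Omega_f}$ then gives $\|{\rm tr}(\bm\tau)\|_{\Omega_f}\lesssim\|\nabla\cdot\bm\tau\|_{\Omega_f}+\|{\rm dev}(\bm\tau)\|_{\Omega_f}$, hence $\|\bm\tau\|_{\Omega_f}^2\lesssim\|{\rm dev}(\bm\tau)\|_{\Omega_f}^2+\|\nabla\cdot\bm\tau\|_{\Omega_f}^2$; adding $\|\nabla\cdot\bm\tau\|_{\Omega_f}^2$ and multiplying by the lower bounds of the coefficients produces \eqref{eq:AM-coer-fluid}.

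The main obstacle is precisely this control of ${\rm tr}(\bm\tau)$: the operators ${\rm dev}$ and $\nabla\cdot$ both annihilate the tensor field $c\,\bm{I}$ with $c$ constant, so no coefficient-free lower bound is possible on all of $\mathbb{H}({\rm div},\Omega_f)$, and the estimate genuinely exploits the essential boundary condition $\bm\tau\bm n_f=\bm 0$ on the positive-measure set $\Gamma_f^N$ (which removes those kernel elements) together with the inf--sup/right-inverse property of the divergence operator. All hidden constants depend only on $\Omega_f$, $\mu_f$ and $\rho_f$, and not on the discretization.
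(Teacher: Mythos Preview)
Your proposal is correct and follows the same line as the paper, which simply observes that continuity follows from the definition of the deviatoric operator and refers to \cite[Lemma~2.2]{cancrini2024} for the coercivity bound. Your argument is in fact a self-contained proof of precisely that cited ``dev--div'' lemma: the Bogovskii-type right inverse of the divergence together with the vanishing-normal-trace condition on $\Gamma_f^N$ is exactly the mechanism used there to control $\|{\rm tr}(\bm\tau)\|_{\Omega_f}$, so you have reproduced the referenced result rather than taken a different route.
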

\begin{proof}
The continuity in \eqref{eq:AM-cont-fluid} directly follows from the definition of the deviatoric operator, whereas for \eqref{eq:AM-coer-fluid} we refer the reader to \cite[Lemma 2.2]{cancrini2024}. 
\end{proof}

\begin{theorem}\label{teo:stability}
For any time $t\in (0,T]$ let $(\bm u_p, \bm w_p, \bm \Sigma_f, \bm r_f)(t) \in \bm V$ be the solution to problem \eqref{eq:weak-form}. Then, it holds

\begin{equation*}
\sup_{t\in(0,T]}\| (\bm u_p, \bm w_p, \bm \Sigma_f) (t)\|_{\mathcal{E}}
 \lesssim  \mathcal{G}_0 + 
 \int_0^T \mathcal{N}(\bm f_p, \bm g_p, \bm F_f, \bm G_f)(s) \, ds,
\end{equation*}
where
\begin{align}
    &\mathcal{G}_0^2  = 
 \| (\bm u_p, \bm w_p, \bm \Sigma_f) (0)\|_{\mathcal{E}}^2 
 %+ \|\bm F_f(0)\|^2_{\Omega_f} + \|\bm G_f(0)\|_{\Omega_f}^2 \nonumber \\ & \qquad \qquad \qquad \qquad \qquad \qquad 
 + \sup_{t\in[0,T]} \left(\|\bm F_f(t)\|^2_{\Omega_f} +  \|\bm G_f(t)\|_{\Gamma_I\cup\Gamma_f^D}^2\right), \label{eq:G_def}\\
    &\mathcal{N}(\bm f_p, \bm g_p, \bm F_f, \bm G_f)(t)  = \| \bm f_p(t) \|_{\Omega_p}
+ \| \bm g_p(t) \|_{\Omega_p} + \|\dot{\bm H}_f(t)\|_{\Omega_f} + \|\dot{\bm G}_f(t)\|_{\Gamma_I\cup\Gamma_f^D}. \label{eq:N_def}
\end{align}
\end{theorem}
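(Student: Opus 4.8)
The plan is to derive an energy identity by testing the weak formulation \eqref{eq:weak-form} with the time derivatives of the solution components, namely $(\bm v, \bm z, \bm \tau, \bm \lambda) = (\dot{\bm u}_p, \dot{\bm w}_p, \dot{\bm \Sigma}_f, \bm r_f)$, and then integrating in time over $(0,t)$. The choice of test functions is dictated by the structure of the energy norms: the mass form $\mathcal{M}^p$ paired with $(\ddot{\bm u}_p, \ddot{\bm w}_p)$ and $(\dot{\bm u}_p, \dot{\bm w}_p)$ produces $\tfrac12\tfrac{d}{dt}$ of the kinetic terms $\|\dot{\bm u}_p\|^2 + \|\dot{\bm w}_p\|^2$ (here one uses the symmetry and positive-definiteness encoded in \eqref{eq:M-coer}); the form $\mathcal{A}^p$ paired with $(\dot{\bm u}_p, \dot{\bm w}_p)$ gives $\tfrac12\tfrac{d}{dt}$ of $\|\mathbb{C}^{1/2}\bm\varepsilon(\bm u_p)\|^2 + \|m^{1/2}\nabla\cdot(\beta\bm u_p + \bm w_p)\|^2$; the form $\mathcal{D}^p$ paired with $\dot{\bm w}_p$ gives the nonnegative dissipative contribution $\|(\eta/k)^{1/2}\dot{\bm w}_p\|^2 + \|\gamma^{1/2}\dot{\bm w}_p\cdot\bm n_p\|^2$ — note this differs slightly from the energy norm, which has $\bm w_p$ rather than $\dot{\bm w}_p$ in those two slots, so I will need to also exploit these as time-integrated quantities or absorb them via Gronwall. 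On the fluid side, $\mathcal{M}^f(\dot{\bm\Sigma}_f, \dot{\bm\Sigma}_f) + \mathcal{A}^f(\bm\Sigma_f, \dot{\bm\Sigma}_f)$ yields $\|(2\mu_f)^{-1/2}\mathrm{dev}(\dot{\bm\Sigma}_f)\|^2$ plus $\tfrac12\tfrac{d}{dt}\|\rho_f^{-1/2}\nabla\cdot\bm\Sigma_f\|^2$, and $\mathcal{D}^f(\dot{\bm\Sigma}_f, \dot{\bm\Sigma}_f)$ gives $\|\delta^{-1/2}\dot{\bm\Sigma}_f\bm n_p\wedge\bm n_p\|^2$ — again a dissipation term in the \emph{rate} rather than directly the norm quantity $\|\delta^{-1/2}\bm\Sigma_f\bm n_p\wedge\bm n_p\|^2$ appearing in $\mathcal{E}_f$.

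The next key observation is the cancellation of the skew-symmetry and coupling terms. With $\bm\tau = \dot{\bm\Sigma}_f$ and $\bm\lambda = \bm r_f$, the pair $\mathcal{B}^f(\bm r_f, \dot{\bm\Sigma}_f) - \mathcal{B}^f(\bm r_f, \dot{\bm\Sigma}_f) = 0$ cancels identically. Similarly, the coupling terms $\mathcal{C}^{pf}((\dot{\bm u}_p, \dot{\bm w}_p), \dot{\bm\Sigma}_f) - \mathcal{C}^{fp}(\dot{\bm\Sigma}_f, (\dot{\bm u}_p, \dot{\bm w}_p))$ should cancel up to the dissipative interface contributions already counted: comparing the definitions of $C^{pf}$ and $C^{fp}$, the normal-flux term $\langle(\alpha\dot{\bm u}_p + \dot{\bm w}_p)\cdot\bm n_p, \dot{\bm\Sigma}_f\bm n_p\cdot\bm n_p\rangle$ appears with opposite signs, and likewise the tangential term $\langle\dot{\bm u}_p\wedge\bm n_p, \dot{\bm\Sigma}_f\bm n_p\wedge\bm n_p\rangle$; so the antisymmetric coupling structure ensures these vanish in the energy balance. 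This is precisely the design feature the authors highlighted — the formulation is chosen so that the interface conservation laws telescope. After these cancellations, integrating over $(0,t)$ leaves $\tfrac12\|(\bm u_p, \bm w_p, \bm\Sigma_f)(t)\|_{\mathcal E}^2$ plus nonnegative dissipation, minus the data contribution $\int_0^t \mathcal{F}(\dot{\bm u}_p, \dot{\bm w}_p, \dot{\bm\Sigma}_f)(s)\,ds$ and minus the value at $t=0$.

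Then I bound the right-hand side. The terms $(\bm f_p, \dot{\bm u}_p)_{\Omega_p}$ and $(\bm g_p, \dot{\bm w}_p)_{\Omega_p}$ are controlled by $\|\bm f_p\|_{\Omega_p}\|\dot{\bm u}_p\|_{\Omega_p}$ and similarly, directly in terms of $\mathcal N$ and the energy. For the fluid loading, I use the alternative expression of $\mathcal{F}$ noted after \eqref{eq:bilinear_forms}, i.e. $-(\bm H_f(s), \nabla\cdot\dot{\bm\Sigma}_f)_{\Omega_f}$ where $\bm H_f(s) = \int_0^s \bm h_f + \bm u_{f0}$; here one integrates by parts \emph{in time} to move the derivative off $\bm\Sigma_f$: $\int_0^t (\bm H_f, \nabla\cdot\dot{\bm\Sigma}_f)\,ds = (\bm H_f(t), \nabla\cdot\bm\Sigma_f(t)) - \int_0^t(\dot{\bm H}_f, \nabla\cdot\bm\Sigma_f)\,ds$ (using $\bm\Sigma_f(0)=0$), which explains why $\mathcal{G}_0$ contains $\sup_t\|\bm F_f(t)\|^2$ and $\mathcal N$ contains $\|\dot{\bm H}_f\|$; the boundary term $\langle\bm G_f, \dot{\bm\Sigma}_f\bm n_f\rangle$ is handled by the same time integration by parts, yielding the $\|\bm G_f\|$ and $\|\dot{\bm G}_f\|$ contributions, and its interface/Dirichlet trace is bounded using a trace inequality together with $\|\dot{\bm\Sigma}_f\bm n_f\|$ controlled by $\|\dot{\bm\Sigma}_f\|_{\mathrm{div},\Omega_f}$, or directly by the dissipation $\mathcal{D}^f$ on $\Gamma_I$. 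Collecting everything, one arrives at
\begin{equation*}
\| (\bm u_p, \bm w_p, \bm\Sigma_f)(t)\|_{\mathcal E}^2 \lesssim \mathcal G_0^2 + \int_0^t \mathcal N(s)\, \|(\bm u_p,\bm w_p,\bm\Sigma_f)(s)\|_{\mathcal E}\, ds + \int_0^t \|(\bm u_p,\bm w_p,\bm\Sigma_f)(s)\|_{\mathcal E}^2\,ds,
\end{equation*}
where the last term accounts for the mismatch between the $\dot{\bm w}_p$-dissipation and the $\bm w_p$-energy-norm entry (and analogously for the fluid interface term), handled by writing $\bm w_p(s) = \bm w_{p0} + \int_0^s \dot{\bm w}_p$ and Cauchy–Schwarz. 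Applying a Gronwall-type lemma (the nonlinear version handling the $\sqrt{\cdot}$ weight, or squaring after a preliminary Young step) and then taking the supremum over $t\in(0,T]$ gives the claimed bound.

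The main obstacle I anticipate is the careful treatment of the interface and boundary duality terms: verifying that the $\mathcal{C}^{pf}/\mathcal{C}^{fp}$ pairing genuinely cancels (and does not leave residual terms of the wrong sign), and controlling the $H^{1/2}$--$H^{-1/2}$ duality products $\langle\bm G_f, \dot{\bm\Sigma}_f\bm n_f\rangle_{\Gamma_I\cup\Gamma_f^D}$ after the time integration by parts — this requires a trace estimate bounding $\|\bm\Sigma_f\bm n_f\|_{H^{-1/2}(\partial\Omega_f)}$ by $\|\bm\Sigma_f\|_{\mathrm{div},\Omega_f}$, which is standard but must be invoked with the right constants so that the resulting term can be absorbed into the energy via Young's inequality. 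The secondary subtlety is the bookkeeping in the Gronwall step, since the dissipative terms naturally produced control time-derivatives $\dot{\bm w}_p$ and $\dot{\bm\Sigma}_f\bm n_p\wedge\bm n_p$ rather than the undifferentiated quantities in $\|\cdot\|_{\mathcal E}$, so one must be slightly careful to recover control of the latter through an additional fundamental-theorem-of-calculus argument before closing the estimate.
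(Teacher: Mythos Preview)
Your proposal is correct and follows essentially the same route as the paper: test with $(\dot{\bm u}_p,\dot{\bm w}_p,\dot{\bm\Sigma}_f,\bm r_f)$, observe the exact cancellation of the $\mathcal{B}^f$ and $\mathcal{C}^{pf}/\mathcal{C}^{fp}$ terms, integrate in time, handle the fluid forcing by temporal integration by parts (producing the $\sup_t\|\bm F_f\|$, $\sup_t\|\bm G_f\|$ and $\|\dot{\bm H}_f\|$, $\|\dot{\bm G}_f\|$ contributions, with the boundary term controlled by the $\mathbb{H}(\mathrm{div})$ trace inequality), and close with Gronwall. The only cosmetic difference is that the paper recovers the non-rate pieces of the energy norm ($\|(\eta/k)^{1/2}\bm w_p\|^2$, $\|\gamma^{1/2}\bm w_p\cdot\bm n_p\|^2$, $\|(2\mu_f)^{-1/2}\mathrm{dev}\,\bm\Sigma_f\|^2$, $\|\delta^{-1/2}\bm\Sigma_f\bm n_p\wedge\bm n_p\|^2$) directly via the inequality $\mathcal{B}(\bm\psi,\bm\psi)(t)\lesssim\mathcal{B}(\bm\psi,\bm\psi)(0)+\int_0^t\mathcal{B}(\dot{\bm\psi},\dot{\bm\psi})\,ds$---which is precisely your fundamental-theorem-of-calculus step---rather than carrying an additional $\int_0^t\|\cdot\|_{\mathcal E}^2\,ds$ into the Gronwall argument.
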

\begin{proof}

We consider $(\bm{v},\bm{z},\bm{\tau}, \bm \lambda) = (\dot{\bm{u}_p},\dot{\bm{w}}_p,\dot{\bm{\Sigma}}_f, \bm r_f) $ in \eqref{eq:weak-form} to get 
\begin{multline*}
\mathcal{M}^p((\ddot{\bm u}_p,\ddot{\bm{w}}_p),(\dot{\bm u}_p,\dot{\bm w}_p)) + 
\mathcal{M}^f(\dot{\bm{\Sigma}}_f,\dot{\bm{\Sigma}}_f)
+ \mathcal{D}^p(\dot{\bm{w}}_p, \dot{\bm w}_p) +  \mathcal{D}^f(\dot{\bm{\Sigma}}_f, \dot{\bm{\Sigma}}_f) \\ + \mathcal{A}^p((\bm{u}_p,\bm{w}_p),(\dot{\bm{u}}_p,\dot{\bm{w}}_p)) 
+ \mathcal{A}^f(\bm{\Sigma}_f, \dot{\bm{\Sigma}}_f)
%+ \mathcal{B}^f(\bm r, \dot{\bm \Sigma}) - \mathcal{B}^f( ??, {\rm skew}(\bm \Sigma)) 
= \mathcal{F}(\dot{\bm{u}}_p,\dot{\bm{w}}_p,\dot{\bm{\Sigma}}_f). 
\end{multline*}
Integrating in time between $0$ and $t$ the above equation leads to 
\begin{multline*}
\frac12 \mathcal{M}^p((\dot{\bm u}_p,\dot{\bm{w}}_p),(\dot{\bm u}_p,\dot{\bm w}_p))(t) + \int_0^t\mathcal{D}^p(\dot{\bm{w}}_p, \dot{\bm w}_p)(s) \, ds   
+ \frac12 \mathcal{A}^p((\bm{u}_p,\bm{w}_p),(\bm{u}_p,\bm{w}_p))(t)      \\
+ \int_0^t \mathcal{M}^f(\dot{\bm{\Sigma}}_f,\dot{\bm{\Sigma}}_f)(s) \, ds  
 + \frac12 \mathcal{A}^f({\bm{\Sigma}}_f,{\bm{\Sigma}}_f)(t) + \int_0^t \mathcal{D}^f(\dot{\bm{\Sigma}}_f,\dot{\bm{\Sigma}}_f)(s)  \,ds  \\ 
   = \int_0^t \mathcal{F}(\dot{\bm{u}}_p,\dot{\bm{w}}_p,\dot{\bm{\Sigma}}_f)(s) \, ds 
+\frac12 \mathcal{M}^p((\bm v_{p0},\bm{z}_{p0}),(\bm v_{p0},\bm{z}_{p0}))  
\\ + \frac12 \mathcal{A}^p((\bm{u}_{p0},\bm{w}_{p0}),(\bm{u}_{p0},\bm{w}_{p0}))  + \frac12 \mathcal{A}^f({\bm{\Sigma}}_{f0},{\bm{\Sigma}}_{f0}).
\end{multline*}
Next, using that $\mathcal{B}(\bm \psi, \bm \psi)(t) \lesssim \mathcal{B}(\bm \psi, \bm \psi)(0) + \int_0^t \mathcal{B}(\dot{\bm \psi}, \dot{\bm \psi})(s)\, ds$ for $\mathcal{B}(\cdot,\cdot) = \{ \mathcal{D}^p(\cdot,\cdot), \mathcal{D}^f(\cdot,\cdot), \mathcal{M}^f(\cdot,\cdot)\}$  with $\bm \psi = \{ \bm w, \bm \Sigma, \dot{\bm \Sigma}\}$, respectively, together with \eqref{eq:M-cont}-\eqref{eq:M-coer}, \eqref{eq:A-cont}-\eqref{eq:A-coer} and \eqref{eq:AM-cont-fluid}-\eqref{eq:AM-coer-fluid}, we obtain
\begin{equation}\label{eq:partial-stability-estimate}
 \| (\bm u, \bm w)(t) \|^2_{\mathcal{E}_p} + \| \bm \Sigma (t) \|^2_{\mathcal{E}_f}  
   \lesssim \int_0^t \mathcal{F}(\dot{\bm{u}}_p,\dot{\bm{w}}_p,\dot{\bm{\Sigma}}_f)(s) \, ds +
\| (\bm u_{p0}, \bm w_{p0}) \|^2_{\mathcal{E}_p} + \| \bm \Sigma_{f0} \|^2_{\mathcal{E}_f}.
\end{equation}
Then, we apply the Cauchy-Schwarz inequality for the forcing terms in the porous domain, while we integrate by parts the other terms to get  
\begin{multline*}
\int_0^t \mathcal{F}(\dot{\bm{u}}_p,\dot{\bm{w}}_p,\dot{\bm{\Sigma}}_f)(s) \, ds \lesssim \int_0^t (\| \bm f_p(s) \|_{\Omega_p}
+ \| \bm g_p(s) \|_{\Omega_p})
\|(\bm{u}_p,\bm{w}_p) (s) \|_{\mathcal{E}_p}\, ds \\
+ (\bm F_f, \bm\Sigma_f)_{\Omega_f}(t) - 
(\bm F_f, \bm\Sigma_f)_{\Omega_f}(0) - \int_0^t (\dot{\bm F}_f, \bm\Sigma_f)_{\Omega_f}(s) \, ds \\
- \langle \bm G_f, \bm{\Sigma}_f \bm n_p\rangle_{\Gamma_I\cup\Gamma_f^D}(t)
+ \langle \bm G_f, \bm{\Sigma}_f \bm n_p\rangle_{\Gamma_I\cup\Gamma_f^D}(0) + 
\int_0^t \langle \dot{\bm G}_f, \bm{\Sigma}_f \bm n_p \rangle_{\Gamma_I\cup\Gamma_f^D} (s) \, ds. 
\end{multline*}
Applying again Cauchy-Schwarz and Young inequalities, using the trace inequality in $\mathbb{H}({\rm div},\Omega_f)$ together with  \cite[Lemma 2.2]{cancrini2024},
and recalling that $\boldsymbol{\Sigma}_f|_{t=0}={\bm 0}$, we obtain
\begin{multline*}
\int_0^t \mathcal{F}(\dot{\bm{u}}_p,\dot{\bm{w}}_p,\dot{\bm{\Sigma}}_f)(s) \, ds \lesssim \int_0^t (\| \bm f_p(s) \|_{\Omega_p}
+ \| \bm g_p(s) \|_{\Omega_p})
\|(\bm{u}_p,\bm{w}_p) (s) \|_{\mathcal{E}_p}\, ds \\
+ \frac{1}{2\epsilon} \|\bm F_f(t)\|^2_{\Omega_f} + \frac{1}{2\epsilon} \|\bm G_f(t)\|_{\Gamma_I\cup\Gamma_f^D}^2 + \epsilon\|\bm \Sigma_f(t)\|_{\mathcal{E}_f}^2  
\\  + \int_0^t \|\dot{\bm F}_f(s)\|_{\Omega_f}  \|\bm\Sigma_f(s)\|_{\mathcal{E}_f} + \|\dot{\bm G}_f(s)\|_{\Gamma_I\cup\Gamma_f^D}  \|\bm\Sigma_f(s)\|_{\mathcal{E}_f} \, ds.
\end{multline*}
Plugging the above estimate into \eqref{eq:partial-stability-estimate} and choosing $\epsilon$ small enough, we get 
\begin{multline*}
 \| (\bm u, \bm w)(t) \|^2_{\mathcal{E}_p} + \| \bm \Sigma (t) \|^2_{\mathcal{E}_f}
 \lesssim  \mathcal{G}_0^2
  + 
 \int_0^t \mathcal{N}(\bm f_p, \bm g_p, \bm F_f, \bm G_f) \left(  \| (\bm u, \bm w)(t) \|_{\mathcal{E}_p} + \| \bm \Sigma (t) \|_{\mathcal{E}_f} \right) \, ds
\end{multline*}
with $\mathcal{G}_0$ and $\mathcal{N}$ defined as in \eqref{eq:G_def} and \eqref{eq:N_def}, respectively.
%
%\begin{align*}
%    \mathcal{G}(\bm F_f, \bm G_f)^2 & = \|\bm F_f(0)\|^2_{\Omega_f} +  \|\bm \Sigma_{f0}\|_{\mathcal{E}_f}^2 + \|\bm G_f(0)\|_{\Omega_f}^2 + \sup_{t\in(0,T]} \|\bm F_f(t)\|^2_{\Omega_f} +  \|\bm G_f(t)\|_{\Omega_f}^2, \\
%    \mathcal{N}(\bm f_p, \bm g_p, \bm F_f, \bm G_f) & = \| \bm f_p \|_{\Omega_p}
%+ \| \bm g_p \|_{\Omega_p} + \|\dot{\bm H}_f\|_{\Omega_f} + \|\dot{\bm g}_f\|_{\Gamma_I}. 
%\end{align*}
Finally, we prove the assertion by taking the supremum over $t\in (0,T]$ and applying Gronwall's Lemma.
\end{proof}

\section{Discontinuous Galerkin space discretization}\label{sec:polydg_discr}
In this section, we present the PolydG discretization of the coupled problem consisting of systems \eqref{eq:biot-system}, \eqref{eq:stokes-sigma} and \eqref{eq:coupling-conditions}.

\subsection{Preliminaries}
We introduce a polytopic mesh $\mathcal{T}_h$ made of general polygons or polyhedra in two or three dimensions, respectively, and define $\mathcal{T}_h$ as $\mathcal{T}_h=\mathcal{T}^p_h \cup\mathcal{T}^f_h$, where $\mathcal{T}^{\diamond}_h=\{K\in\mathcal{T}_h:\overline{K}\subseteq\overline{\Omega}_{\diamond}\}$, with $\diamond=\{p,f\}$. We assume that the meshes $\mathcal{T}_h^p$  and $\mathcal{T}_h^f$ are aligned with $\Omega_p$ and $\Omega_f$, respectively.
We set the polynomial degrees $p_{p,K}\geq 1$ and  $p_{f,K}\geq 1$ in each mesh element of $\mathcal{T}_h^p$ and $\mathcal{T}_h^f$. 
The discrete polynomial spaces are introduced as follows: $$\bm{V}_h^p=[\mathcal{P}_{p_p}(\mathcal{T}_h^p)]^d, \; \bm{S}_h^f=[\mathcal{P}_{p_f}(\mathcal{T}_h^f)]^{d\times d} \; {\rm and} 
 \; \bm{\Lambda}_h^f=[\mathcal{P}_{p_f-1}(\mathcal{T}_h^f)]^{d^*}.  $$  
Moreover, $\mathcal{P}_{r}(\mathcal{T}^{\diamond}_h)$ is the space of piecewise polynomials in $\Omega_{\diamond}$ of total degree less than or equal to $r$ in any $K\in\mathcal{T}_h^{\diamond}$ with $\diamond=\{p,f\}$.

In the following, we assume that the model parameters in \eqref{eq:biot-system} and \eqref{eq:stokes-sigma} are element-wise constant for all $K\in\mathcal{T}_h^p \cup \mathcal{T}_h^f$.
To deal with polygonal and polyhedral elements, we define \textit{element-interface}  the intersection of the $(d-1)$-dimensional faces of  
any two neighboring elements of $\mathcal{T}_h$. If $d=2$, an interface/face is a line segment and the set of all interfaces/faces is denoted by $\mathcal{F}_h$.
If $d=3$, an interface is a polygon we assume could be further decomposed into planar triangles collected in the set  $\mathcal{F}_h$.  
We decompose $\mathcal{F}_h$ as $\mathcal{F}_h=\mathcal{F}_{h}^I \cup \mathcal{F}_h^p \cup  \mathcal{F}_h^f$, where
$ \mathcal{F}_{h}^I = \{F\in\mathcal{F}_h:F\subset\partial K^p\cap\partial K^f,K^p\in\mathcal{T}_{h}^p,K^f\in\mathcal{T}_{h}^f\},$  and 
$\mathcal{F}_h^p$, and $\mathcal{F}_h^f$ denote all the faces of $\mathcal{T}_h^p$, and $\mathcal{T}_h^f$ respectively, not laying on $\Gamma_I$.
Finally, the faces of $\mathcal{T}_h^p$ and $\mathcal{T}_h^f$ can be further written as the union of \textit{internal} ($i$) and \textit{boundary} ($b$) faces, respectively, namely,
$\mathcal{F}^p_h=\mathcal{F}^{p,i}_h\cup\mathcal{F}^{p,b}_h$ and $\mathcal{F}^f_h=\mathcal{F}^{f,i}_h\cup\mathcal{F}^{f,b}_h$, where $\mathcal{F}^{\diamond,b}_h = \mathcal{F}^{\diamond,N}_h \cup \mathcal{F}^{\diamond,D}_h$, with $\diamond =\{p,f\}$, include both the edges where Neumann and Dirichlet conditions are imposed. Following \cite{CangianiDongGeorgoulisHouston_2017}, we next introduce the main assumption on $\mathcal{T}_h$. 
\\
\begin{definition}\label{def::polytopic_regular}
A mesh $\mathcal{T}_h$ is said to be \textit{polytopic-regular} if for any $ K \in \mathcal{T}_h$, there exists a set of non-overlapping $d$-dimensional simplices contained in $K$, denoted by $\{S_K^F\}_{F\subset{\partial K}}$, such that for any face $F\subset\partial K$, it holds $h_K\lesssim d|S_K^F| \, |F|^{-1}$.
\end{definition}

\medskip
\begin{assumption}\label{ass:mesh}
The mesh $\mathcal{T}_h$ satisfies the following assumptions:
\begin{itemize}
    \item[a)] The sequence of meshes $\{\mathcal{T}_h\}_h$ is assumed to be \textit{uniformly} polytopic regular in the sense of  Definition~\ref{def::polytopic_regular}.
    \item[b)] For any pair of neighboring elements $K^\pm\in\mathcal{T}_h^{\diamond}$, it holds $h_{K^+}\lesssim h_{K^-}\lesssim h_{K^+},\ \ p_{\diamond,K^+}\lesssim p_{\diamond,K^-}\lesssim p_{\diamond,K^+}$, with $\diamond=\{e,p\}$.
\end{itemize}
\end{assumption}
This will allow us to avoid technicalities in the following proofs. We remark that these assumptions do not restrict the number of faces per element or their measure relative to the diameter of the element they belong to as pointed out in \cite{CangianiDongGeorgoulisHouston_2017}.
Under 
Assumption~\ref{ass:mesh}, the following \textit{trace-inverse inequality} holds:
\begin{align}
& ||v||_{L^2(\partial K)}\lesssim ph_K^{-1/2}||v||_{L^2(K)}
&& \forall \ K\in\mathcal{T}_h \ \forall v \in \mathcal{P}_p(K).
\label{eq::traceinv}
\end{align}
Next, we  make the following assumption for later use.
\medskip
\begin{assumption}\label{ass:covering}
Any mesh $\mathcal{T}_h$ admits a covering $\mathcal{T}_{\S}$ in the sense of Definition \ref{def::polytopic_regular} such that 
i) $\max_{K\in\mathcal{T}_h} \textrm{card}\{K^\prime\in\mathcal{T}_h:K^\prime\cap\mathcal{K}\neq\emptyset,\ \mathcal{K}\in\mathcal{T}_{\S} \text{ s.t.} \ K\subset\mathcal{K}\}\lesssim 1$ and 
ii) $h_\mathcal{K}\lesssim h_K $ for each pair $K\in\mathcal{T}_h, \ \mathcal{K}\in\mathcal{T}_{\S}$ with $K\subset\mathcal{K}$.
\label{ass::2}
\end{assumption}

Finally, as in  \cite{Arnoldbrezzicockburnmarini2002}, for sufficiently piecewise smooth scalar-, vector- and tensor-valued fields $\psi$, $\bm{v}$ and $\bm{\tau}$, respectively, we define the averages and jumps on each \textit{element-interface}  $F\in\mathcal{F}_h^{p,i}\cup\mathcal{F}_h^{f,i}\cup  \mathcal{F}_h^{I}$ shared by the elements $K^{\pm}\in \mathcal{T}_h$ as follows:
\begin{align*}
\llbrace \psi \rrbrace &= \frac{\psi^++\psi^-}{2}, 
&   \llbracket\psi\rrbracket  &=  \psi^+\bm{n}^++\psi^-\bm{n}^-, & & \\
\llbrace \bm{v}\rrbrace  & = \frac{\bm{v}^+ + \bm{v}^-}{2}, &  
 \llbracket\bm{v}\rrbracket & = \bm{v}^+\otimes\bm{n}^++\bm{v}^-\otimes\bm{n}^-, 
 & \llbracket\bm{v}\rrbracket_{\bm n} & = \bm{v}^+\cdot\bm{n}^++\bm{v}^-\cdot\bm{n}^-, \\
 \llbrace \bm{\tau} \rrbrace & = \frac{\bm{\tau}^+ + \bm{\tau}^-}{2}, 
 & \llbracket\bm{\tau}\rrbracket & = \bm{\tau}^+\bm{n}^+ + \bm{\tau}^-\bm{n}^-, & &
\end{align*}
where $\otimes$ is the tensor product in $\mathbb{R}^3$, $\cdot^{\pm}$ denotes the trace on $F$ taken within $K^\pm$, and $\bm{n}^\pm$ is the outer normal vector to $\partial K^\pm$. Accordingly, on \textit{boundary} faces $F\in\mathcal{F}_h^{p,b}\cup\mathcal{F}_h^{f,b}$, we set
$
\llbracket\psi\rrbracket = \psi\bm{n},\
\llbrace\psi \rrbrace = \psi,\
\llbrace\bm{v}\rrbrace= \bm{v},\
\llbracket\bm{v}\rrbracket= \bm{v}\otimes\bm{n},\
\llbracket\bm{v}\rrbracket_{\bm n} =\bm{v}\cdot\bm{n},\
\llbrace\bm{\tau}\rrbrace= \bm{\tau}, \llbracket \bm \tau \rrbracket = \bm \tau \bm n.$

For later use, we also define $\nabla_h$ and $(\nabla_h \cdot)$ to be the broken gradient and divergence operators, respectively, 
set $\bm{\varepsilon}_h(\bm{v})=(\nabla_h \bm{v} + \nabla_h \bm{v}^T)/2$ and use the short-hand notation  
$(\cdot,\cdot)_{\Omega_{\diamond}}=\sum_{K\in\mathcal{T}_h^{\diamond}} \int_K\cdot$ and $\langle\cdot,\cdot\rangle_{\mathcal{F}_h^{\diamond}}=\sum_{F\in\mathcal{F}_h^{\diamond}}\int_F\cdot$ for $\diamond = \{p,f\}$. 
In the following, we assume that $\mathbb{C}$, $m$ and $\rho_f$ are element-wise constant and we define 
$\overline{\mathbb{C}}_K=(|\mathbb{C}^{1/2}|_2^2)_{|K}$, $\overline{m}_{K}=( m)_{|K}$ for all $K\in\mathcal{T}_h^p$ and $\overline{\rho}_{f,K}=\rho_{f|K}$ for all $K\in\mathcal{T}_h^f$.

\subsection{Semi-discrete PolydG formulation}
We define the discrete space $\bm V_h = \bm V_h^p \times \bm V_h^p \times \bm S_h^f \times \bm \Lambda_h^f$ and introduce the semi-discrete problem: $\forall t \in (0,T]$, find $(\bm u_{ph}, \bm w_{ph}, \bm{\Sigma}_{fh}, \bm r_{fh})(t) \in \bm V_h$ s. t. $\forall (\bm{v},\bm{z},\bm{\tau}, \bm \lambda)\in \bm V_h$ it holds
\begin{multline}\label{eq:weak-dg}
\mathcal{M}^p((\ddot{\bm u}_{ph},\ddot{\bm{w}}_{ph}),(\bm v,\bm z)) + 
\mathcal{M}^f(\dot{\bm{\Sigma}}_{fh},\bm{\tau})
+ \mathcal{D}^p(\dot{\bm{w}}_{ph}, \bm z) +  \mathcal{D}^f(\dot{\bm{\Sigma}}_{fh}, \bm \tau)\\ + \mathcal{A}^p_h((\bm{u}_{ph},\bm{w}_{ph}),(\bm{v},\bm{z})) 
+ \mathcal{A}_h^f(\bm{\Sigma}_{fh}, \bm{\tau})  + \mathcal{B}^f(\bm r_{fh}, \bm \tau) - \mathcal{B}^f(\bm \lambda, \dot{\bm \Sigma}_{fh})  \\
+ \mathcal{C}^{pf} ((\dot{\bm{u}}_{ph},\dot{\bm{w}}_{ph}),\bm{\tau})  - \mathcal{C}^{fp} (\dot{\bm{\Sigma}}_{fh},(\bm{v},\bm{z})) = \mathcal{F}(\bm{v},\bm{z},\bm{\tau}), 
\end{multline}
with initial conditions $\bm{u}_{ph}(0)=\bm u_{0h}, \dot{\bm{u}}_{ph}(0)=\bm{v}_{0h}, \bm{w}_{ph}(0)=\bm{w}_{0h}, \dot{\bm{w}}_{ph}(0)=\bm{z}_{0h}$ and $\bm{\Sigma}_{fh}(0)=\bm{0}$, where 
$\bm u_{0h}, \bm v_{0h}, \bm w_{0h}$ and $ \bm z_{0h}$ are the $\bm L^2$-orthogonal projection of the initial data in \eqref{eq:biot-system}.
The bilinear form $\mathcal{A}_h^p(\cdot, \cdot)$ can be splitted as
\begin{equation}\label{eq:def_Ah}
\mathcal{A}_h^p((\bm u,\bm w),(\bm v,\bm z)) = \mathcal{A}_h^e(\bm{u},\bm v) + \mathcal{B}_h^p(\beta\bm{u}+\bm{w},\beta\bm{v}+\bm{z})
\end{equation}
where for any $ \bm u,\bm w,\bm v,\bm z \in \bm V_h^p$ and for any $\bm \Sigma,\bm \tau \in  \bm S_h^f$ it holds
\begin{align}
\mathcal{A}_h^e(\bm{u},\bm{v}) & = ( \bm{\sigma}_{eh}(\bm u), \bm{\varepsilon}_h(\bm{v}))_{\Omega_p}  -  
         \langle \llbrace \bm{\sigma}_{eh}(\bm{u})\rrbrace,
         \llbracket \bm{v} \rrbracket\rangle_{\mathcal{F}_h^p} \nonumber
         \\ & \qquad \qquad \qquad \qquad -  
         \langle \llbracket \bm{u} \rrbracket, \llbrace \bm{\sigma}_{eh}(\bm{v})\rrbrace\rangle_{\mathcal{F}_h^p} +
         \langle \chi_e
         \llbracket \bm{u} \rrbracket, \llbracket \bm{v} \rrbracket\rangle_{\mathcal{F}_h^p}, \label{def:bilinear_poro_e}\\[1mm]
    \mathcal{B}_h^p(\bm{w},\bm{z}) & = ( m\nabla_h\cdot\bm{w}, \nabla_h\cdot\bm{z})_{\Omega_p}  -  
  \langle \llbrace m\nabla_h\cdot\bm{w} \rrbrace, \llbracket \bm{z} \rrbracket_{\bm n} \rangle_{\mathcal{F}_h^p}  
    \nonumber \\ & \qquad \qquad  \qquad \qquad -    \langle \llbracket \bm{w} \rrbracket_{\bm n}, \llbrace m \nabla_h\cdot\bm{z} \rrbrace\rangle_{\mathcal{F}_h^p} + \langle \chi_p
         \llbracket \bm{w} \rrbracket_{\bm n}, \llbracket \bm{z} \rrbracket_{\bm n} \rangle_{\mathcal{F}_h^p}, \label{def:bilinear_poro_p} \\[1mm]
      \mathcal{A}_h^f(\bm{\Sigma},\bm{\tau}) & = (\rho_f^{-1} \nabla_h\cdot\bm{\Sigma}, \nabla_h\cdot\bm{\tau})_{\Omega_f}  -  \ \langle \llbrace \rho_f^{-1} \nabla_h \cdot \bm{\Sigma} \rrbrace, \llbracket \bm{\tau} \rrbracket \rangle_{\mathcal{F}_h^f} \nonumber \\ & \qquad \qquad\qquad\qquad - 
      \langle \llbracket \bm{\Sigma} \rrbracket, \llbrace \rho_f^{-1} \nabla_h\cdot\bm{\tau}\rrbrace \rangle_{\mathcal{F}_h^f}  +  \langle \chi_f\llbracket \bm{\Sigma} \rrbracket, \llbracket \bm{\tau} \rrbracket\rangle_{\mathcal{F}_h^f}. \label{def:bilinear_fluid}
\end{align}

Finally, we define the penalization functions $\chi_e, \chi_p\in L^\infty(\Omega_p)$ and $\chi_f \in L^\infty(\Omega_f)$ appearing in \eqref{def:bilinear_poro_e},\eqref{def:bilinear_poro_p} and \eqref{def:bilinear_fluid}, respectively:

\begin{equation}\label{eq:stab_term_1}
\chi_e|_F =
   \begin{cases}
     c_1 \max\limits_{{K \in \{K^+, K^-\}}}
         \overline{\mathbb{C}}_K \, p_{p,K}^2 h_{K}^{-1} &
     \forall F \in \mathcal{F}_h^{p,i}, \quad 
     F \subseteq \partial K^+ \cap \partial K^-,\\
     \overline{\mathbb{C}}_K \, p_{p,K}^2 h_{K}^{-1} & 
     \forall F \in \mathcal{F}_h^{p,b}, \quad 
     F \subseteq \partial K,
   \end{cases}
\end{equation}
\begin{equation}\label{eq:stab_term_2}
\chi_p|_F =
   \begin{cases}
     c_2 \max\limits_{{K \in \{K^+, K^-\}}}
     \overline{m}_K \, p_{p,K}^2 h_{K}^{-1} & 
     \forall F \in \mathcal{F}_h^{p,i}, \quad 
     F \subseteq \partial K^+ \cap \partial K^-,\\
     \overline{m}_K \, p_{p,K}^2 h_{K}^{-1} &
     \forall F \in \mathcal{F}_h^{p,b}, \quad 
     F \subseteq \partial K,
   \end{cases}
\end{equation}

\begin{equation}\label{eq:stab_term_3}
\chi_f|_F =
   \begin{cases}
     c_3 \max\limits_{{K \in \{K^+, K^-\}}}
     (\overline{\rho_f}_K)^{-1}  p_{f,K}^2 h_{K}^{-1} &
     \forall F \in F_h^{f,i}, \quad 
     F \subseteq \partial K^+ \cap \partial K^-\\
     (\overline{\rho_f}_K)^{-1}  p_{f,K}^2 h_{K}^{-1} &
     \forall F \in \mathcal{F}_h^{p,b}, \quad 
     F \subseteq \partial K,
   \end{cases}
\end{equation}
with $c_1$, $c_2$, $c_3 > 0$ positive constants to be suitably chosen. 

\subsection{Stability and semi-discrete error analysis}

To carry out the stability analysis of the semi-discrete problem \eqref{eq:weak-dg}, we introduce the energy norm defined for any $(\bm u,\bm w) \in C^1((0,T];\bm{V}_h^p \times \bm{V}_h^p)$ and $\bm \Sigma \in C^0((0,T];\bm{V}_h^f)$ as
\begin{equation}\label{eq::norm_def1}
\| (\bm u, \bm w, \bm \Sigma) (t)\|^2_{\rm E }  =    \| (\bm u, \bm w)(t) \|^2_{{\rm E}_p} + \| \bm \Sigma (t) \|^2_{{\rm E_f}},
\end{equation}
with
\begin{align*}
    \| (\bm u, \bm w)(t) \|^2_{\rm{E}_p} & =  \|  \dot{\bm u}(t) \|^2_{\Omega_p} +  \|  \dot{\bm w}(t) \|^2_{\Omega_p} +  \| (\eta/ k)^{\frac12} {\bm{w}} (t) \|^2_{\Omega_p} \\ & \qquad  \qquad\qquad\qquad + \textcolor{black}{\|\gamma^\frac12 {\bm{w}} \cdot \bm{n}_p(t)\|_{\Gamma_I}^2}   
  +  \|\bm u\|_{\rm dG,e}^2 + | (\beta \bm u + \bm w)(t) |_{\rm dG,p}^2,  \\
\| \bm \Sigma (t) \|^2_{{\rm E_f}} & = 
  \| (1/2\mu_f)^{\frac12} {\rm dev}({\bm{\Sigma}}) (t)\|^2_{\Omega_f} 
 +  |  \bm \Sigma (t) |^2_{ \rm dG, f} +  \textcolor{black}{\| \delta^{-\frac12} {\bm{\Sigma}} \bm{n}_p \wedge \bm{n}_p (t)\|_{\Gamma_I}^2,}
\end{align*}
and where
\begin{align*}
\|\bm{v}\|_{\rm dG,e}^2 & =\|\mathbb{C}^{1/2}\bm{\varepsilon}_h(\bm{v})\|_{\Omega_p}^2+\|\chi_e^{1/2}\llbracket\bm{v}\rrbracket\|_{\mathcal{F}_h^p\cup \mathcal{F}_h^{p,D}}^2  & \forall \bm v\in \bm V_h^p, \\
|\bm{z}|_{\rm dG,p}^2 &=\|{m}^{1/2}\nabla_h\cdot\bm z\|_{\Omega_p}^2+
\|\chi_p^{1/2}\llbracket\bm{z}\rrbracket_{\bm n}\|_{\mathcal{F}_h^p \cup \mathcal{F}_h^{p,D}}^2 & \forall \bm z\in \bm V_h^p,\\
    |\boldsymbol{\sigma}|_{\rm dG, f}^2 & = \| (\rho_f)^{-\frac12} \nabla_h \cdot \boldsymbol{\sigma} \|^2_{\Omega_f} + \| \ \chi_f^{1/2}\llbracket \boldsymbol{\sigma}\rrbracket\ \|^2_{\mathcal{F}_h^f \cup \mathcal{F}_h^{f,N}} & \forall \boldsymbol{\sigma} \in \boldsymbol{S}_h^f,
\end{align*}
with $\chi_e, \chi_p$ and $\chi_f$ defined as in \eqref{eq:stab_term_1}, \eqref{eq:stab_term_2}, and \eqref{eq:stab_term_3}, respectively.
For later use, we also define the following augmented norm or any $(\bm u,\bm w) \in C^1((0,T]; \bm H^2(\mathcal{T}_h^p) \times \bm H^2(\mathcal{T}_h^p))$ and any $\bm \Sigma \in C^0((0,T];\mathbb{ H}^2(\mathcal{T}_h^f))$ as
\begin{equation*}\label{eq::norm_def1_agm}
\trinorm{(\bm u, \bm w, \bm \Sigma)}^2_{\rm E }  =    \trinorm{ (\bm u, \bm w)(t) }^2_{{\rm E}_p} + \trinorm{ \bm \Sigma (t) }^2_{{\rm E_f}},
\end{equation*}
with
\begin{align*}
    \trinorm{ (\bm u, \bm w)(t) }^2_{\rm{E}_p} & =  \|  \dot{\bm u}(t) \|^2_{\Omega_p} +  \|  \dot{\bm w}(t) \|^2_{\Omega_p} +  \| (\eta/ k)^{\frac12} {\bm{w}} (t) \|^2_{\Omega_p} \\ & \qquad  \qquad\qquad\qquad + \textcolor{black}{\|\gamma^\frac12 {\bm{w}} \cdot \bm{n}_p(t)\|_{\Gamma_I}^2}   
  +  \trinorm{\bm u(t)}_{\rm dG,e}^2 + \trinorm{ (\beta \bm u + \bm w)(t)} _{\rm dG,p}^2,  \\
\trinorm{ \bm \Sigma (t) }^2_{{\rm E_f}} & = 
  \| (1/2\mu_f)^{\frac12} {\rm dev}({\bm{\Sigma}}) (t)\|^2_{\Omega_f} 
 +  \trinorm{ \bm \Sigma (t) }^2_{ \rm dG, f} +  \| \delta^{-\frac12} {\bm{\Sigma}} \bm{n}_p \wedge \bm{n}_p (t)\|_{\Gamma_I}^2,
\end{align*}
and where
\begin{align*}
\trinorm{\bm{v}}_{\rm dG,e}^2 &=\norm{\bm{v}}_{\rm dG,e}^2+\norm{\chi_e^{-1/2}\llbrace\mathbb{C}\bm{\varepsilon}_h(\bm{v})\rrbrace}_{\mathcal{F}_h^p}^2
&& \forall \bm v\in \bm H^2(\mathcal{T}_h^p),\\
\trinorm{\bm{z}}_{\rm dG,p}^2 &= |\bm{z}|_{\rm dG,p}^2+
\norm{\chi_p^{-1/2}\llbrace m \nabla_h\cdot\bm z \rrbrace}_{\mathcal{F}_h^p\cup \mathcal{F}_h^I}^2
&& \forall \bm z\in \bm H^2(\mathcal{T}_h^p),\\
   \trinorm{\boldsymbol{\sigma}}_{\rm dG,f}^2 & = |\boldsymbol{\sigma}|_{\rm dG,f}^2 +  \| \chi_f^{-1/2}\llbrace \bm{\nabla}_h \cdot \boldsymbol{\sigma}\rrbrace  \|^2_{\mathcal{F}_h^f} && \forall \boldsymbol{\sigma} \in \mathbb{H}^2(\mathcal{T}_h^f).
\end{align*}

The following Lemma establishes the coercivity and boundedness of the discrete bilinear forms $\mathcal{A}_h^e, \mathcal{B}_h^p$ and $\mathcal{A}_h^f$ defined in \eqref{def:bilinear_poro_e},\eqref{def:bilinear_poro_p} and \eqref{def:bilinear_fluid}, respectively.
\begin{lemma}\label{lem::cont_coerc}
Let Assumption \ref{ass:mesh} be satisfied. Then, it holds
\begin{align*}
& \mathcal{A}_h^e(\bm u,\bm v)\lesssim 
\|\bm u\|_{\rm dG,e}
\|\bm v\|_{\rm dG,e}
&&  \mathcal{A}_h^e(\bm u,\bm u)\gtrsim 
\|\bm u\|_{\rm dG,e}^2
&& \forall \bm u,\bm v\in\bm V_h^p,\\
& \mathcal{B}_h^p(\bm u,\bm v)\lesssim 
|\bm u|_{\rm dG,p}
|\bm v|_{\rm dG,p}
&& 
\mathcal{B}_h^p(\bm u,\bm u)\gtrsim 
|\bm u|_{\rm dG,p}^2
&& \forall \bm u,\bm v\in\bm V_h^p,\\
& \mathcal{A}_h^f(\boldsymbol{\sigma}, \boldsymbol{\tau})  \lesssim |\boldsymbol{\sigma}_h|_{\rm dG,f}
|\boldsymbol{\tau}_h|_{\rm dG,f}
&& \mathcal{A}_h^f(\boldsymbol{\sigma}, \boldsymbol{\sigma})  \gtrsim |\boldsymbol{\sigma}_h|_{\rm dG,f}^2  && \forall \boldsymbol{\sigma},\boldsymbol{\tau} \in \boldsymbol{S}_h^f,\\
& \mathcal{A}_h^e(\bm u,\bm v)\lesssim
\trinorm{\bm u}_{\rm dG,e}
\|\bm v\|_{\rm dG,e}
&&\qquad\forall \bm u \in\bm H^2(\mathcal{T}_h^p) 
&&\forall \bm v \in \bm{V}_h^p,\\
&\mathcal{B}_h^p(\bm w,\bm z)\lesssim
\trinorm{\bm w}_{\rm dG,p}
|\bm z|_{\rm dG,p}
&&\qquad\forall \bm w \in\bm H^2(\mathcal{T}_h^p)
&&\forall \bm z \in \bm{V}_h^p,\\
& \mathcal{A}_h^f(\boldsymbol{\sigma}, \boldsymbol{\tau})  \lesssim \trinorm{\boldsymbol{\sigma}}_{\rm dG,f} |\boldsymbol{\tau}|_{\rm dG,f}  && 
\qquad  \forall \boldsymbol{\sigma} \in \mathbb{H}^2(\mathcal{T}_h^f) &&  \forall \boldsymbol{\tau} \in \boldsymbol{S}_h^f.     
\end{align*}
The coercivity bounds hold provided that the stability parameters $c_1,c_2$ and $c_3$  in \eqref{eq:stab_term_1}, \eqref{eq:stab_term_2} and \eqref{eq:stab_term_3}, respectively, are chosen sufficiently large.
\end{lemma}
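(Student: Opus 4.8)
The plan is to handle $\mathcal{A}_h^e$, $\mathcal{B}_h^p$ and $\mathcal{A}_h^f$ simultaneously, since all three share the symmetric interior-penalty structure
$(\mathbb{K}\,\mathrm{D}_h u,\mathrm{D}_h v)_{\Omega}-\langle\llbrace\mathbb{K}\,\mathrm{D}_h u\rrbrace,\llbracket v\rrbracket\rangle-\langle\llbracket u\rrbracket,\llbrace\mathbb{K}\,\mathrm{D}_h v\rrbrace\rangle+\langle\chi\llbracket u\rrbracket,\llbracket v\rrbracket\rangle$
over the relevant faces, where $(\mathrm{D}_h,\mathbb{K},\chi)$ equals $(\bm\varepsilon_h,\mathbb{C},\chi_e)$, $(\nabla_h\cdot,m,\chi_p)$ or $(\nabla_h\cdot,\rho_f^{-1},\chi_f)$, and the matching $\mathrm{dG}$-norm is $\|\mathbb{K}^{1/2}\mathrm{D}_h v\|_\Omega^2+\|\chi^{1/2}\llbracket v\rrbracket\|_{\mathcal{F}_h}^2$. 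The only non-routine ingredient is a discrete trace estimate: for a \emph{polynomial} argument $v$, combining the trace--inverse inequality \eqref{eq::traceinv} (valid by Assumption~\ref{ass:mesh}), the element-wise constancy of the coefficients, and the lower bound $\chi|_F\gtrsim c\,\overline{\mathbb{K}}_K\,p_K^2 h_K^{-1}$ on each face $F\subseteq\partial K$ built into \eqref{eq:stab_term_1}--\eqref{eq:stab_term_3} (with $c\in\{c_1,c_2,c_3\}$), one obtains $\|\chi^{-1/2}\llbrace\mathbb{K}\,\mathrm{D}_h v\rrbrace\|_{\mathcal{F}_h}^2\lesssim c^{-1}\|\mathbb{K}^{1/2}\mathrm{D}_h v\|_\Omega^2$, also using $|\mathbb{K}\bm\xi|^2\le\overline{\mathbb{K}}_K|\mathbb{K}^{1/2}\bm\xi|^2$ pointwise on each $K$.

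Given this, continuity in the energy norm follows from element-/face-wise Cauchy--Schwarz: the volume term is bounded by $\|\mathbb{K}^{1/2}\mathrm{D}_h u\|_\Omega\|\mathbb{K}^{1/2}\mathrm{D}_h v\|_\Omega$, while each consistency term, rewritten as $\langle\chi^{-1/2}\llbrace\mathbb{K}\,\mathrm{D}_h u\rrbrace,\chi^{1/2}\llbracket v\rrbracket\rangle$, is estimated by the product of $\|\chi^{-1/2}\llbrace\cdot\rrbrace\|_{\mathcal{F}_h}$ (controlled by the trace estimate) and $\|\chi^{1/2}\llbracket\cdot\rrbracket\|_{\mathcal{F}_h}$ (part of the norm). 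For coercivity I would expand the quadratic form and bound the doubled cross-term by Young's inequality, $2\langle\llbrace\mathbb{K}\,\mathrm{D}_h u\rrbrace,\llbracket u\rrbracket\rangle\le\tfrac{C}{c}\|\mathbb{K}^{1/2}\mathrm{D}_h u\|_\Omega^2+\tfrac12\|\chi^{1/2}\llbracket u\rrbracket\|_{\mathcal{F}_h}^2$ with $C$ mesh-independent, so that
\[
\mathcal{A}_h^{\bullet}(u,u)\ \ge\ \Big(1-\tfrac{C}{c}\Big)\|\mathbb{K}^{1/2}\mathrm{D}_h u\|_\Omega^2+\tfrac12\|\chi^{1/2}\llbracket u\rrbracket\|_{\mathcal{F}_h}^2 ,
\]
which yields the three coercivity bounds once $c_1,c_2,c_3$ are fixed larger than the corresponding constants $C$.

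For the last three (augmented-norm) estimates, one argument lies in $\bm H^2(\mathcal{T}_h^p)$ or $\mathbb{H}^2(\mathcal{T}_h^f)$, so its mean flux $\|\chi^{-1/2}\llbrace\mathbb{K}\,\mathrm{D}_h\cdot\rrbrace\|_{\mathcal{F}_h}$ cannot be recovered through an inverse inequality; but this quantity is exactly the extra term included in $\trinorm{\cdot}_{\rm dG,\bullet}$, whereas the other consistency term still carries the polynomial test function and is treated as above. Cauchy--Schwarz then gives $\mathcal{A}_h^{\bullet}(u,v)\lesssim\trinorm{u}_{\rm dG,\bullet}\|v\|_{\rm dG,\bullet}$, using $\|v\|_{\rm dG,\bullet}\le\trinorm{v}_{\rm dG,\bullet}$ whenever a polynomial appears in the ``rough'' slot.

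The hard part is not a genuine obstacle but a bookkeeping subtlety in the trace estimate: one must exploit that $\chi$ on an interface is the \emph{maximum} over the two neighbouring elements (so restriction to either side still dominates $\overline{\mathbb{K}}_K p_K^2 h_K^{-1}$), and that \eqref{eq::traceinv} already sums over all faces of $K$, so no dependence on the (unbounded) number of faces per element enters; with this in hand, choosing $c_1,c_2,c_3$ sufficiently large closes the coercivity bounds.
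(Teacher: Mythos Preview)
Your proposal is correct and follows exactly the standard symmetric interior-penalty argument that the paper defers to by citing \cite[Lemma~A.3]{AntoniettiMazzieriNatipoltri2021} and \cite[Lemma~3]{cancrini2024}; the paper gives no self-contained proof beyond those references. Your unified treatment of the three forms, the use of the trace--inverse inequality \eqref{eq::traceinv} together with the face-wise maximum in $\chi$ to control the consistency terms, the Young-inequality coercivity step, and the handling of the augmented norms are precisely the mechanisms underlying those cited results.
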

\begin{proof}
The proof is based on employing 
the same arguments as in \cite[Lemma A.3]{AntoniettiMazzieriNatipoltri2021} and in 
\cite[Lemma 3]{cancrini2024}.
\end{proof}

\begin{theorem}
For any time $t\in (0,T]$ let $(\bm u_{ph}, \bm w_{ph}, \bm \Sigma_{fh}, \bm r_{fh})(t) \in \bm V_h$ the solution to problem \eqref{eq:weak-dg}. Then, it holds

\begin{equation*}
\sup_{t\in(0,T]}\| (\bm u_{ph}, \bm w_{ph}, \bm \Sigma_{fh}) (t)\|_{\rm{E}}
 \lesssim  \mathcal{G}_{0h}+ 
 \int_0^T \mathcal{N}(\bm f_p, \bm g_p, \bm F_f, \bm G_f)(s) \, ds
\end{equation*}
where 
\begin{align}
    \mathcal{G}_{0h}^2 & = 
 \| (\bm u_{ph}, \bm w_{ph}, \bm \Sigma_{fh}) (0)\|_{\mathcal{E}}^2 
 + \sup_{t\in[0,T]}\left( \|\bm F_f(t)\|^2_{\Omega_f} +  \|\bm G_f(t)\|_{\Gamma_I\cup\Gamma_f^D}^2\right), \label{eq:Gh_def}
\end{align}
and $\mathcal{N}$ is defined as in \eqref{eq:N_def}.
\end{theorem}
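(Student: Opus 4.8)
The plan is to reproduce, essentially verbatim, the argument of Theorem~\ref{teo:stability}, with the continuous forms $\mathcal{A}^p$ and $\mathcal{A}^f$ replaced by their PolydG counterparts $\mathcal{A}_h^p$ (split as in \eqref{eq:def_Ah}) and $\mathcal{A}_h^f$, and with Lemma~\ref{lem::cont_coerc} used in place of Lemmas~\ref{lemma:stab-poro}--\ref{lemma:stab-fluid}. First I would take $(\bm v,\bm z,\bm\tau,\bm\lambda) = (\dot{\bm u}_{ph},\dot{\bm w}_{ph},\dot{\bm\Sigma}_{fh},\bm r_{fh})$ in \eqref{eq:weak-dg}. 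Exactly as in the continuous case, upon this choice the two $\mathcal{B}^f$-contributions cancel (since $\bm\tau=\dot{\bm\Sigma}_{fh}$ and $\bm\lambda=\bm r_{fh}$), and the transpose structure of the interface forms gives $\mathcal{C}^{pf}((\dot{\bm u}_{ph},\dot{\bm w}_{ph}),\dot{\bm\Sigma}_{fh}) = \mathcal{C}^{fp}(\dot{\bm\Sigma}_{fh},(\dot{\bm u}_{ph},\dot{\bm w}_{ph}))$, so these terms cancel too; what remains is an energy identity involving only $\mathcal{M}^p$, $\mathcal{M}^f$, $\mathcal{D}^p$, $\mathcal{D}^f$, $\mathcal{A}_h^p$, $\mathcal{A}_h^f$ and $\mathcal{F}$.

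Second, I would exploit the symmetry (and time-independence of the coefficients) of $\mathcal{M}^p$, $\mathcal{A}_h^e$, $\mathcal{B}_h^p$ and $\mathcal{A}_h^f$ to rewrite $\mathcal{M}^p((\ddot{\bm u}_{ph},\ddot{\bm w}_{ph}),(\dot{\bm u}_{ph},\dot{\bm w}_{ph}))$, $\mathcal{A}_h^p((\bm u_{ph},\bm w_{ph}),(\dot{\bm u}_{ph},\dot{\bm w}_{ph}))$ and $\mathcal{A}_h^f(\bm\Sigma_{fh},\dot{\bm\Sigma}_{fh})$ as $\tfrac12\tfrac{d}{dt}$ of the associated quadratic forms, integrate over $(0,t)$, and use the elementary bound $\mathcal{B}(\bm\psi,\bm\psi)(t)\lesssim \mathcal{B}(\bm\psi,\bm\psi)(0)+\int_0^t\mathcal{B}(\dot{\bm\psi},\dot{\bm\psi})(s)\,ds$ for $\mathcal{B}\in\{\mathcal{D}^p,\mathcal{D}^f,\mathcal{M}^f\}$ to recover, at time $t$, the $\Gamma_I$-trace norms and the $\mathrm{dev}(\bm\Sigma_{fh})$-term of the energy norm $\|\cdot\|_{\rm E}$. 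Since $\bm\Sigma_{fh}(0)=\bm 0$, all initial fluid contributions vanish; combining the coercivity \eqref{eq:M-coer} of $\mathcal{M}^p$ with the dG coercivity bounds of Lemma~\ref{lem::cont_coerc} (valid for $c_1,c_2,c_3$ large enough) then yields $\|(\bm u_{ph},\bm w_{ph})(t)\|_{{\rm E}_p}^2 + \|\bm\Sigma_{fh}(t)\|_{{\rm E}_f}^2 \lesssim \int_0^t \mathcal{F}(\dot{\bm u}_{ph},\dot{\bm w}_{ph},\dot{\bm\Sigma}_{fh})(s)\,ds + \|(\bm u_{ph},\bm w_{ph})(0)\|_{{\rm E}_p}^2$, the last term being precisely the porous part of $\mathcal{G}_{0h}^2$.

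Third, I would bound the forcing as in the continuous proof: Cauchy--Schwarz on the porous part produces $\int_0^t(\|\bm f_p\|_{\Omega_p}+\|\bm g_p\|_{\Omega_p})\|(\bm u_{ph},\bm w_{ph})(s)\|_{{\rm E}_p}\,ds$, while on the fluid part one integrates by parts in time (using $\bm\Sigma_{fh}(0)=\bm 0$), then applies Cauchy--Schwarz, Young's inequality, the $\mathbb{H}(\mathrm{div})$-trace inequality and the discrete counterpart of \cite[Lemma~2.2]{cancrini2024} to control $\|\bm\Sigma_{fh}\|_{\Omega_f}$ and the $H^{-1/2}(\Gamma_I\cup\Gamma_f^D)$-norm of $\bm\Sigma_{fh}\bm n_f$ by $\|\bm\Sigma_{fh}\|_{{\rm E}_f}$. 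Choosing the Young parameter small and absorbing the resulting $\|\bm\Sigma_{fh}(t)\|_{{\rm E}_f}^2$-term into the left-hand side, and estimating the time-endpoint terms by $\sup_{[0,T]}(\|\bm F_f\|_{\Omega_f}^2+\|\bm G_f\|_{\Gamma_I\cup\Gamma_f^D}^2)$, leaves $\|(\bm u_{ph},\bm w_{ph},\bm\Sigma_{fh})(t)\|_{\rm E}^2 \lesssim \mathcal{G}_{0h}^2 + \int_0^t \mathcal{N}(\bm f_p,\bm g_p,\bm F_f,\bm G_f)(s)\,\|(\bm u_{ph},\bm w_{ph},\bm\Sigma_{fh})(s)\|_{\rm E}\,ds$, with $\mathcal{N}$ as in \eqref{eq:N_def}. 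Taking the supremum over $t\in(0,T]$ and applying Gronwall's lemma gives the assertion.

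The main obstacle is the one ingredient that is not already supplied by Lemma~\ref{lem::cont_coerc} and not a purely routine transcription of the continuous argument: the discrete analogue of \cite[Lemma~2.2]{cancrini2024}, i.e.\ a broken Korn-/inf-sup-type inequality guaranteeing that the full $\mathbb{L}^2(\Omega_f)$-norm of the (weakly symmetric, hence a priori non-symmetric) tensor $\bm\Sigma_{fh}$ — and therefore its $\mathbb{H}(\mathrm{div})$-trace on $\Gamma_I\cup\Gamma_f^D$ — is dominated by $\|\mathrm{dev}(\bm\Sigma_{fh})\|_{\Omega_f}$, $\|\nabla_h\cdot\bm\Sigma_{fh}\|_{\Omega_f}$ and the jump seminorm $\|\chi_f^{1/2}\llbracket\bm\Sigma_{fh}\rrbracket\|$, that is, by $\|\bm\Sigma_{fh}\|_{{\rm E}_f}$. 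Everything else — the cancellations of the coupling and $\mathcal{B}^f$ terms, the $\tfrac{d}{dt}$-identity, the two integrations by parts in time, and the Gronwall step — carries over line by line from the proof of Theorem~\ref{teo:stability}.
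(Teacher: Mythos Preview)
Your proposal is correct and follows essentially the same route as the paper, which simply states that ``the assertion follows the lines for the proof of Theorem~\ref{teo:stability} and uses the results in Lemma~\ref{lem::cont_coerc}.'' Your identification of the discrete analogue of \cite[Lemma~2.2]{cancrini2024} as the one nontrivial ingredient beyond Lemma~\ref{lem::cont_coerc} is apt; the paper glosses over this point, implicitly relying on the broken version of that estimate established in \cite{cancrini2024} for the same PolydG stress formulation.
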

\begin{proof}
The assertion follows the lines for the proof of Theorem \ref{teo:stability} and uses the results in Lemma \ref{lem::cont_coerc}. 
\end{proof}

\subsection{Error analysis}
In this section, we prove an a-priori error estimate in the energy norm \eqref{eq::norm_def1} for the semi-discrete problem \eqref{eq:weak-dg}. 
We start by introducing the following notation
 for any time $t\in (0,T]$,
\begin{align*}
    \bm e^u(t) & = (\bm u_p - \bm u_{ph})(t)   
 & = & \; (\bm u_p - \bm u_{pI})(t) &+&\;(\bm u_{pI} - \bm u_{ph})(t) & = &\; \bm e_I^u(t) - \bm e_h^u(t),\\
    \bm e^w(t) & = (\bm w_p - \bm w_{ph})(t)  & = &\; (\bm w_p - \bm w_{pI})(t) &+ &\; (\bm w_{pI} - \bm w_{ph})(t) & = &\; \bm e_I^w(t) - \bm e_h^w(t),\\
    \bm e^\Sigma(t) & = (\bm \Sigma_f - \bm \Sigma_{fh})(t)  & = &\; (\bm \Sigma_f - \bm \Sigma_{fI})(t) &+&\; (\bm \Sigma_{fI} - \bm \Sigma_{fh})(t) & = &\; \bm e_I^\Sigma(t) - \bm e_h^\Sigma(t), \\
    \bm e^r(t) & = (\bm r_f - \bm r_{fh})(t)  & = &\; (\bm r_f - \bm r_{fI})(t) &+&\; (\bm r_{fI} - \bm r_{fh})(t) & = &\; \bm e_I^r(t) - \bm e_h^r(t),
\end{align*}
and observe that \eqref{eq:weak-dg} is \textit{strongly consistent} in the sense that the error equation reads as follows for any $(\bm{v},\bm{z},\bm{\tau}, \bm \lambda)\in \bm V_h$:
\begin{multline}\label{eq:error}
\mathcal{M}^p((\ddot{\bm e}^u,\ddot{\bm{e}}^w),(\bm v,\bm z)) + 
\mathcal{M}^f(\dot{\bm{e}}^\Sigma,\bm{\tau})
+ \mathcal{D}^p(\dot{\bm{e}}^w, \bm z) +  \mathcal{D}^f(\dot{\bm{e}}^\Sigma, \bm \tau) + \mathcal{A}^p_h((\bm{e}^u,\bm{e}^w),(\bm{v},\bm{z})) \\
+ \mathcal{A}_h^f(\bm{e}^\Sigma, \bm{\tau})  + \mathcal{B}^f(\bm e^r, \bm \tau) - \mathcal{B}^f(\bm \lambda, \dot{\bm e}^\Sigma)  
+ \mathcal{C}^{pf} ((\dot{\bm{e}}^u,\dot{\bm{e}}^w),\bm{\tau})  - \mathcal{C}^{fp} (\dot{\bm{e}}^\Sigma,(\bm{v},\bm{z})) = 0. 
\end{multline}
For an open bounded polytopic domain $\Upsilon\subset\mathbb{R}^d$ and a generic polytopic mesh $\mathcal{T}_h$ over $\Upsilon$ satisfying Assumption \ref{ass::2},
as in  \cite{cangiani2014hp}, we can introduce the Stein extension operator $\tilde{\mathcal{E}}:H^m(\kappa)\rightarrow H^m(\mathbb{R}^d)$ \cite{stein1970singular}, for any $\kappa\in\mathcal{T}_h$ and $m\in\mathbb{N}_0$, such that $\tilde{\mathcal{E}}v|_\kappa=v$ and $\norm{\tilde{\mathcal{E}}v}_{m,\mathbb{R}^d}\lesssim\norm{v}_{m,\kappa}$. The corresponding vector-valued and tensor valued versions mapping $\bm H^m(\kappa)$ and $\mathbb{H}^m(\kappa)$ onto $\bm H^m(\mathbb{R}^d)$ and $\mathbb{H}^m(\mathbb{R}^d)$ act component-wise and are denoted in the same way. 
In what follows, for any $\kappa\in\mathcal{T}_h$, we will denote by $\mathcal{K}_\kappa$ the simplex belonging to the covering $\mathcal{T}_{\S}$ such that $\kappa\subset\mathcal{K}_\kappa$, cf. Assumption \ref{ass::2}.
\\
The next Lemma provides the interpolation bounds that are instrumental for the derivation of the a-priori error estimate.
\\
%%%
\begin{lemma}
For any $(\bm u,\bm w) \in  C^1([0,T];\, \bm H^m(\mathcal{T}_h^p)\times \bm H^\ell(\mathcal{T}_h^p))$, with $m,\ell\geq 2$, there exists $(\bm u_I,\bm w_I)\in C^1([0,T];\bm{V}_h^p\times \bm{V}_h^p)$ s.t.:
\begin{equation}\label{eq:interp_est}
\begin{aligned}
\trinorm{(\bm u-\bm u_I,\bm w-\bm w_I)}_{\rm E_p }^2 \lesssim&
\sum_{\kappa\in\mathcal{T}_h^p}{\frac{h_\kappa^{2(s_\kappa-1)}}{p_{p,\kappa}^{2m-3}}}\left(\norm{\widetilde{\mathcal{E}}\dot{\bm u}}_{m,\mathcal{K}_\kappa}^2+\norm{\widetilde{\mathcal{E}}\bm u}_{m,\mathcal{K}_\kappa}^2\right)\\+&
\sum_{\kappa\in\mathcal{T}_h^p}{\frac{h_\kappa^{2(r_\kappa-1)}}{p_{p,\kappa}^{2\ell-3}}}\left(\norm{\widetilde{\mathcal{E}}\dot{\bm w}}_{\ell,\mathcal{K}_\kappa}^2+\norm{\widetilde{\mathcal{E}}\bm w}_{\ell,\mathcal{K}_\kappa}^2\right).
\end{aligned}
\end{equation}
where $s_\kappa = \min(m,p_{p,\kappa}+1)$, and  $r_\kappa = \min(\ell,p_{p,\kappa}+1)$.
Also, for any  $\bm \Sigma \in  C^0([0,T];\, \mathbb{H}^n(\mathcal{T}_h^f))$, with $n\geq 2$, there exists $\bm \Sigma_I \in C^0([0,T];\bm{S}_h^f)$ s.t.:
\begin{equation}\label{eq:interp_est_sigma}
\begin{aligned}
\trinorm{\bm \Sigma-\bm \Sigma_I}_{\rm E_f}^2 \lesssim&
\sum_{\kappa\in\mathcal{T}_h^f}{\frac{h_\kappa^{2(q_\kappa-1)}}{p_{f,\kappa}^{2n-3}}} \norm{\widetilde{\mathcal{E}}\bm \Sigma }_{n,\mathcal{K}_\kappa}^2,
\end{aligned}
\end{equation}
where $q_\kappa = \min(n,p_{f,\kappa}+1)$.
Moreover, for any  $\bm r \in  C^0([0,T];\, \bm H^\nu(\mathcal{T}_h^f))$, with $\nu\geq 2$, there exists $\bm r_I \in C^0([0,T];\boldsymbol{\Lambda}_h^f)$ s.t.:
\begin{equation}\label{eq:interp_est_sigma}
\begin{aligned}
\|\bm r-\bm r_I\|_{\Omega_f}^2 \lesssim&
\sum_{\kappa\in\mathcal{T}_h^f}{\frac{h_\kappa^{2\zeta_\kappa}}{(p_{f,\kappa}-1)^{2\nu}}} \norm{\widetilde{\mathcal{E}}\bm r }_{\nu,\mathcal{K}_\kappa}^2,
\end{aligned}
\end{equation}
where $\zeta_\kappa = \min(\nu,p_{f,\kappa})$.\label{lem::interp2}
\end{lemma}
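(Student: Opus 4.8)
The plan is to construct the interpolants $(\bm u_I,\bm w_I)$, $\bm\Sigma_I$ and $\bm r_I$ elementwise and componentwise by composing the Stein extension $\widetilde{\mathcal{E}}$ with an $hp$-optimal polynomial approximation on the covering simplices $\mathcal{K}_\kappa\in\mathcal{T}_{\S}$, and then to bound each contribution to the (augmented) energy norm separately. Concretely, for fixed $t\in[0,T]$ and each $\kappa\in\mathcal{T}_h^p$, I would extend $\bm u(t)$ to $\bm H^m(\mathbb{R}^d)$ through $\widetilde{\mathcal{E}}$ and invoke the $hp$-approximation estimates of \cite{cangiani2014hp,CangianiDongGeorgoulisHouston_2017}: there is a polynomial $\Pi_\kappa\bm u\in[\mathcal{P}_{p_{p,\kappa}}(\mathcal{K}_\kappa)]^d$ with
\[
\norm{\widetilde{\mathcal{E}}\bm u-\Pi_\kappa\bm u}_{q,\kappa}^2
\;\lesssim\;\frac{h_\kappa^{2(s_\kappa-q)}}{p_{p,\kappa}^{2(m-q)}}\,\norm{\widetilde{\mathcal{E}}\bm u}_{m,\mathcal{K}_\kappa}^2,
\qquad 0\le q\le\min(m,2),
\]
$s_\kappa=\min(m,p_{p,\kappa}+1)$, and I would set $\bm u_I|_\kappa=(\Pi_\kappa\bm u)|_\kappa\in\bm V_h^p$. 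The interpolants $\bm w_I$, $\bm\Sigma_I$, $\bm r_I$ are built the same way, with regularity indices $\ell,n,\nu$ and polynomial degrees $p_{p,\kappa},p_{f,\kappa},p_{f,\kappa}-1$, yielding the exponents $r_\kappa,q_\kappa,\zeta_\kappa$. Since $\widetilde{\mathcal{E}}$ and $\Pi_\kappa$ are linear and independent of $t$, time differentiation commutes with interpolation, so $\dot{\bm u}_I=(\dot{\bm u})_I$; together with the assumed $C^1$-in-time regularity this both produces the terms $\norm{\widetilde{\mathcal{E}}\dot{\bm u}}_{m,\mathcal{K}_\kappa}^2$, $\norm{\widetilde{\mathcal{E}}\dot{\bm w}}_{\ell,\mathcal{K}_\kappa}^2$ on the right-hand side of \eqref{eq:interp_est} and makes all bounds uniform in $t$.

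Then I would estimate the norm term by term. The element-volume contributions --- $\norm{\dot{\bm u}-\dot{\bm u}_I}_{\Omega_p}$, $\norm{\dot{\bm w}-\dot{\bm w}_I}_{\Omega_p}$, $\norm{(\eta/k)^{1/2}(\bm w-\bm w_I)}_{\Omega_p}$, $\norm{\mathbb{C}^{1/2}\bm{\varepsilon}_h(\bm u-\bm u_I)}_{\Omega_p}$, $\norm{m^{1/2}\nabla_h\cdot(\beta(\bm u-\bm u_I)+(\bm w-\bm w_I))}_{\Omega_p}$, the ${\rm dev}$ and divergence terms of $\trinorm{\bm\Sigma-\bm\Sigma_I}_{\rm E_f}$, and $\norm{\bm r-\bm r_I}_{\Omega_f}$ --- follow directly from the bound above with $q=0$ or $q=1$, using that $\mathbb{C},m,\rho_f$ are elementwise constant and absorbing the surplus positive powers of $h_\kappa$ (from $h_\kappa^{2s_\kappa}$ against $h_\kappa^{2(s_\kappa-1)}$) and of $p_{\cdot,\kappa}$. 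For the penalty and interface face terms --- $\norm{\chi_e^{1/2}\llbracket\bm u-\bm u_I\rrbracket}$, $\norm{\chi_p^{1/2}\llbracket\beta(\bm u-\bm u_I)+(\bm w-\bm w_I)\rrbracket_{\bm n}}$, $\norm{\chi_f^{1/2}\llbracket\bm\Sigma-\bm\Sigma_I\rrbracket}$, $\norm{\gamma^{1/2}(\bm w-\bm w_I)\cdot\bm n_p}_{\Gamma_I}$, $\norm{\delta^{-1/2}(\bm\Sigma-\bm\Sigma_I)\bm n_p\wedge\bm n_p}_{\Gamma_I}$ --- I would pass from $F$ to $\partial\kappa$ and use the scaled multiplicative trace inequality $\norm{v}_{\partial\kappa}^2\lesssim h_\kappa^{-1}\norm{v}_\kappa^2+\norm{v}_\kappa\norm{\nabla v}_\kappa$; with the $q=0,1$ bounds this gives $\norm{\bm u-\bm u_I}_{\partial\kappa}^2\lesssim h_\kappa^{2s_\kappa-1}p_{p,\kappa}^{-(2m-1)}\norm{\widetilde{\mathcal{E}}\bm u}_{m,\mathcal{K}_\kappa}^2$, and multiplying by $\chi_e|_F\sim\overline{\mathbb{C}}_\kappa\,p_{p,\kappa}^2 h_\kappa^{-1}$ produces precisely the factor $h_\kappa^{2(s_\kappa-1)}p_{p,\kappa}^{-(2m-3)}$ (the unweighted $\Gamma_I$ terms carry no penalty weight and are of higher order). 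Finally, for the flux terms of the augmented norms --- $\norm{\chi_e^{-1/2}\llbrace\mathbb{C}\bm{\varepsilon}_h(\bm u-\bm u_I)\rrbrace}_{\mathcal{F}_h^p}$, $\norm{\chi_p^{-1/2}\llbrace m\nabla_h\cdot(\beta(\bm u-\bm u_I)+(\bm w-\bm w_I))\rrbrace}$, $\norm{\chi_f^{-1/2}\llbrace\nabla_h\cdot(\bm\Sigma-\bm\Sigma_I)\rrbrace}_{\mathcal{F}_h^f}$ --- I would apply the same trace inequality to $\nabla(\bm u-\bm u_I)$ (respectively to the divergence), where $m,\ell,n\ge2$ is exactly what is needed so that the second-order derivatives appearing in $\norm{\nabla v}_\kappa$ are controlled; this yields $\norm{\nabla(\bm u-\bm u_I)}_{\partial\kappa}^2\lesssim h_\kappa^{2s_\kappa-3}p_{p,\kappa}^{-(2m-3)}\norm{\widetilde{\mathcal{E}}\bm u}_{m,\mathcal{K}_\kappa}^2$, and multiplying by $\chi_e^{-1}|_F\sim\overline{\mathbb{C}}_\kappa^{-1}p_{p,\kappa}^{-2}h_\kappa$ leaves $h_\kappa^{2(s_\kappa-1)}p_{p,\kappa}^{-(2m-1)}$, again within the claimed rate. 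The bounded-overlap property of Assumption~\ref{ass:covering} and the local quasi-uniformity of Assumption~\ref{ass:mesh} then allow summing all the elementwise bounds over $\kappa$ without loss; the estimate for $\bm r_I$ is merely the $q=0$ case with polynomial degree $p_{f,\kappa}-1$, so that $s_\kappa$ becomes $\min(\nu,p_{f,\kappa})=\zeta_\kappa$ and the $p$-denominator $(p_{f,\kappa}-1)^{2\nu}$.

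The only genuinely delicate point --- and the main obstacle --- is the exponent bookkeeping: one must track precisely how each trace or inverse estimate trades a power of $h_\kappa$ for a power of $p_{\cdot,\kappa}$, so that it is the penalty and flux terms, not the volume terms, that dictate the final rate $h_\kappa^{2(s_\kappa-1)}p_{p,\kappa}^{-(2m-3)}$, and one must make sure the multiplicative trace inequality is applied only to quantities whose one-higher derivative is still controlled by the assumed Sobolev regularity ($m,\ell,n,\nu\ge2$). Everything else is routine for polytopic dG spaces; the detailed elementwise computations for the poroelastic and the fluid contributions are, respectively, analogous to those carried out in \cite{AntoniettiMazzieriNatipoltri2021} and \cite{cancrini2024}.
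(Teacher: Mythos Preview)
Your proposal is correct and follows essentially the same approach as the paper, which simply cites \cite[Lemma~33]{CangianiDongGeorgoulisHouston_2017} together with \cite[Lemma~4.2]{AntoniettiMazzieriNatipoltri2021} and \cite[Lemma~4.1]{cancrini2024}; you have unpacked precisely the elementwise Stein-extension/$hp$-projection construction and the term-by-term trace/penalty bookkeeping that those references contain. The only cosmetic difference is that the paper's proof is a two-line appeal to existing lemmas, whereas you spell out the mechanism (including the commutation of interpolation with $\partial_t$ and the identification of the penalty/flux terms as rate-limiting), which is exactly what those lemmas encode.
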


\begin{proof}
    We prove \eqref{eq:interp_est} by combining the results in \cite[Lemma 33]{CangianiDongGeorgoulisHouston_2017} with  the ones in \cite[Lemma 4.2] {AntoniettiMazzieriNatipoltri2021} while we obtain \eqref{eq:interp_est_sigma} by combining again the results in \cite[Lemma 33]{CangianiDongGeorgoulisHouston_2017} with the ones in \cite[Lemma 4.1]{cancrini2024}.
\end{proof}

In addition to the continuity and coercivity results of Lemma \ref{lem::cont_coerc}, the a-priori error analysis requires an inf-sup condition for the constraint form $\mathcal{B}^f(\cdot,\cdot)$, as follows:
\begin{assumption}\label{hp:infsup}
    There exist a constant $\beta_h^f>0$ such that the following inequality holds :
\begin{equation}\label{eq:infsup}
\sup_{\boldsymbol{\tau}\in\boldsymbol{S}_h^f\setminus\{\boldsymbol{0}\}}\frac{\mathcal{B}^f(\boldsymbol{\lambda}_h,\boldsymbol{\tau})}{\|\boldsymbol{\tau}\|_{{\rm E}_f} +\| \ \chi_f^{1/2}\llbracket \boldsymbol{\sigma}\rrbracket\ \|^2_{\mathcal{F}_h^I}} \geq \beta_h^f\|\boldsymbol{\lambda}_h\|_{\Omega_f},
\qquad \forall \,\boldsymbol{\lambda}_h\in\boldsymbol{\Lambda}_h^f.
\end{equation}
\end{assumption}
Although the previous result is not available for polygonal meshes (and will be the subject of future work), it can be proven for matching simplicial meshes (that coincide with their covering $\mathcal{T}_{\S}$ defined in Assumption \ref{ass:covering}). In this case, the proof of the inf-sup inequality \eqref{eq:infsup} is based on \cite{boffi2009reduced}, with modifications to account for the interface terms in the norm $\|\boldsymbol{\tau}\|_{\text{E}_f}$ and in the additional term in the denominator, and it is reported in Appendix \ref{sec:inf-sup}. 

Under the previous assumption, we can establish the instrumental result:
\begin{lemma}\label{lem:inf-sup}
Let Assumption \ref{hp:infsup} be verified.
Then, %given the continuous  solution $(\bm{u},\bm{w}, \boldsymbol{\Sigma},\bm r) \in \boldsymbol{V}^p\times\boldsymbol{V}^p\times\boldsymbol{S}^f\times\boldsymbol{\Lambda}^f$ to \eqref{eq:weak-form} and  the discrete solution $(\bm{u}_h,\bm{w}_h, \boldsymbol{\Sigma}_h,\bm r_h) \in \boldsymbol{V}_h^p\times\boldsymbol{V}_h^p\times\boldsymbol{S}_h^f\times\boldsymbol{\Lambda}_h^f$ to \eqref{eq:weak-dg}, 
% \begin{equation}\label{eq:coerc}
% \sup_{(\bm{v},\bm{z},\boldsymbol{\tau},\boldsymbol{\lambda})\in\boldsymbol{V}_h^p\times\boldsymbol{V}_h^p\times\boldsymbol{S}_h^f\times\boldsymbol{\Lambda}_h^f}\frac{\mathfrak{A}_h((\bm{u},\bm{w},\boldsymbol{\Sigma},\boldsymbol{r}),(\bm{v},\bm{z},\boldsymbol{\tau},\boldsymbol{\lambda}))}{\|\boldsymbol{\tau}\|_
% } \geq \mathfrak{a}??
% \end{equation}
 the following holds: % for any $(\bm{v}_h,\bm{z}_h, \boldsymbol{\sigma}_h,\bm s_h) \in \boldsymbol{V}_h^p\times\boldsymbol{V}_h^p\times\boldsymbol{S}_h^f\times\boldsymbol{\Lambda}_h^f$:
 % \begin{multline}\label{eq:bound-rh}
 %    \|{\bm e}^{\bm r}_h\|_{\Omega_f}
 %    \lesssim \frac{1}{\beta^f_h}%\Biggl(
 %    \left(\|\dot{\bm e}^{\boldsymbol{\Sigma}}_h\|_{\rm E_f}
 %    + \|{\bm e}^{\boldsymbol{\Sigma}}_h\|_{\rm E_f}
 %    +\|(\dot{\bm e}^{\bm u}_h,\dot{\bm e}^{\bm w}_h)\|_{\rm E_p}\right)     % +\|\dot{\bm e}^{\boldsymbol{\Sigma}}_I\|_{\rm E_f}
 %    % + \|{\bm e}^{\boldsymbol{\Sigma}}_I\|_{\rm E_f}
 %    % +\|(\dot{\bm e}^{\bm u}_I,\dot{\bm e}^{\bm w}_I)\|_{\rm E_p}
 %    % +\|{\bm e}^{\bm r}_I\|_{\mathcal T_h^f}
 %    % \\
 %    % +\trinorm{\dot{\bm \Sigma}-\dot{\bm \sigma}_h}_{\rm E_f}
 %    % + \|\bm{r}-\bm{s}_h\|_{\bm L^2(\mathcal T_h^f)}
 %    % +\trinorm{\bm \Sigma-\bm \sigma_h}_{\rm E_f}
 %    % \\+\trinorm{(\dot{\bm u}-\dot{\bm v}_h,\dot{\bm w}_h-\dot{\bm z}_h)}_{\rm E_p}
 %    % \Biggr).
% \end{multline}
 \begin{multline}\label{eq:bound-rh}
    \beta^f_h\|{\bm e}^{\bm r}_h\|_{\Omega_f}
    \lesssim \\ %\Biggl(
    \|\dot{\bm e}^{\boldsymbol{\Sigma}}_h\|_{\rm E_f}
    + \|{\bm e}^{\boldsymbol{\Sigma}}_h\|_{\rm E_f}
    +\|(\dot{\bm e}^{\bm u}_h,\dot{\bm e}^{\bm w}_h)\|_{\rm E_p}
    + \trinorm{\dot{\bm e}^{\boldsymbol{\Sigma}}_I}_{\rm E_f}
    + \trinorm{{\bm e}^{\boldsymbol{\Sigma}}_I}_{\rm E_f}
    +\|(\dot{\bm e}^{\bm u}_I,\dot{\bm e}^{\bm w}_I)\|_{\rm E_p}
    + \|{\bm e}^{\bm r}_I\|_{\Omega_f}
 \end{multline}
\end{lemma}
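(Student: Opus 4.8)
The plan is to exploit the inf-sup condition in Assumption~\ref{hp:infsup} applied to the discrete error component $\bm e_h^r = \bm r_{fI} - \bm r_{fh} \in \boldsymbol\Lambda_h^f$, and then to control the numerator $\mathcal{B}^f(\bm e_h^r, \bm\tau)$ by testing the error equation \eqref{eq:error} with a suitable choice of test functions. Concretely, for any $\bm\tau \in \boldsymbol S_h^f \setminus \{\bm 0\}$, I would pick the test tuple $(\bm v, \bm z, \bm \tau, \bm\lambda) = (\bm 0, \bm 0, \bm\tau, \bm 0)$ in \eqref{eq:error}. This kills the purely poroelastic terms $\mathcal{M}^p$, $\mathcal{D}^p$, $\mathcal{A}_h^p$, the term $\mathcal{C}^{fp}$, and the term $\mathcal{B}^f(\bm\lambda, \dot{\bm e}^\Sigma)$, leaving
\begin{equation*}
\mathcal{B}^f(\bm e^r, \bm\tau) = -\mathcal{M}^f(\dot{\bm e}^\Sigma, \bm\tau) - \mathcal{D}^f(\dot{\bm e}^\Sigma, \bm\tau) - \mathcal{A}_h^f(\bm e^\Sigma, \bm\tau) - \mathcal{C}^{pf}((\dot{\bm e}^u, \dot{\bm e}^w), \bm\tau).
\end{equation*}

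Next I would split $\bm e^r = \bm e_I^r - \bm e_h^r$ on the left-hand side, so that $\mathcal{B}^f(\bm e_h^r, \bm\tau) = \mathcal{B}^f(\bm e_I^r, \bm\tau) + \mathcal{M}^f(\dot{\bm e}^\Sigma, \bm\tau) + \mathcal{D}^f(\dot{\bm e}^\Sigma, \bm\tau) + \mathcal{A}_h^f(\bm e^\Sigma, \bm\tau) + \mathcal{C}^{pf}((\dot{\bm e}^u, \dot{\bm e}^w), \bm\tau)$, and then again decompose each $\bm e^{\bullet}$ on the right into its interpolation part $\bm e_I^{\bullet}$ and discrete part $\bm e_h^{\bullet}$. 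Each of the resulting terms must be bounded by (a constant times) the denominator $\|\bm\tau\|_{{\rm E}_f} + \|\chi_f^{1/2}\llbracket\bm\tau\rrbracket\|^2_{\mathcal{F}_h^I}$ times one of the quantities appearing on the right-hand side of \eqref{eq:bound-rh}. For $\mathcal{B}^f(\bm e_I^r, \bm\tau) = (\bm e_I^r, {\rm skew}(\bm\tau))_{\Omega_f}$ this is Cauchy–Schwarz and $\|{\rm skew}(\bm\tau)\|_{\Omega_f} \le \|\bm\tau\|_{\Omega_f} \lesssim \|\bm\tau\|_{{\rm E}_f}$, giving the $\|\bm e_I^r\|_{\Omega_f}$ contribution. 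For $\mathcal{M}^f$ and $\mathcal{D}^f$ one uses the continuity bound \eqref{eq:AM-cont-fluid} (together with the trivial bound on $\mathcal{D}^f$ via the interface trace term in the ${\rm E}_f$-norm), yielding $\|\dot{\bm e}^\Sigma\|_{{\rm E}_f}$-type contributions; splitting then gives $\|\dot{\bm e}_h^\Sigma\|_{{\rm E}_f}$ and $\trinorm{\dot{\bm e}_I^\Sigma}_{{\rm E}_f}$ (the augmented norm is needed for the interpolation part because $\dot{\bm e}_I^\Sigma$ is only piecewise $H^2$, not discrete). For $\mathcal{A}_h^f(\bm e^\Sigma, \bm\tau)$ one uses the continuity bounds of Lemma~\ref{lem::cont_coerc}: the discrete-discrete part via $\mathcal{A}_h^f(\bm e_h^\Sigma, \bm\tau) \lesssim |\bm e_h^\Sigma|_{\rm dG,f}|\bm\tau|_{\rm dG,f} \lesssim \|\bm e_h^\Sigma\|_{{\rm E}_f}\|\bm\tau\|_{{\rm E}_f}$, and the interpolation part via the augmented continuity bound $\mathcal{A}_h^f(\bm e_I^\Sigma, \bm\tau) \lesssim \trinorm{\bm e_I^\Sigma}_{\rm dG,f}|\bm\tau|_{\rm dG,f}$. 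Finally, for $\mathcal{C}^{pf}((\dot{\bm e}^u, \dot{\bm e}^w), \bm\tau)$, which consists of interface duality pairings of $(\alpha \dot{\bm e}^u + \dot{\bm e}^w)\cdot\bm n_p$ and $\dot{\bm e}^u \wedge \bm n_p$ against $\bm\tau\bm n_p\cdot\bm n_p$ and $\bm\tau\bm n_p\wedge\bm n_p$, I would use Cauchy–Schwarz on $\Gamma_I$, a trace inequality to bound the $\dot{\bm e}^{u,w}$ traces on $\Gamma_I$ by their ${\rm E}_p$-norms (for the discrete part, via the trace-inverse inequality \eqref{eq::traceinv} and the dG-norms; for the interpolation part, via the Stein extension and the augmented norm), and control the $\bm\tau\bm n_p$ traces by $\|\bm\tau\|_{{\rm E}_f}$ plus the extra jump term $\|\chi_f^{1/2}\llbracket\bm\tau\rrbracket\|^2_{\mathcal{F}_h^I}$ in the denominator — this last point is exactly why that extra term was inserted into \eqref{eq:infsup}. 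Collecting all contributions, dividing by the denominator, and taking the supremum over $\bm\tau$ yields \eqref{eq:bound-rh}.

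The main obstacle I anticipate is the rigorous handling of the interface coupling term $\mathcal{C}^{pf}$: one needs a trace estimate on $\Gamma_I$ that is compatible with the broken, polytopal setting, controlling $\|\bm\tau\bm n_p\|_{\Gamma_I}$ (and its normal/tangential components) by the fluid energy norm augmented with the interface-jump penalty, while simultaneously bounding the poroelastic traces $\dot{\bm e}^u, \dot{\bm e}^w$ on $\Gamma_I$ by $\|(\dot{\bm e}^u, \dot{\bm e}^w)\|_{{\rm E}_p}$ (which contains only the time-derivative $L^2$-norms and the dG/div seminorms — so one must use that $\dot{\bm e}_h^u, \dot{\bm e}_h^w$ are polynomials to invoke \eqref{eq::traceinv}, and route the interpolation parts through the augmented norm and the Stein extension). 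A secondary technical point is being careful about which pieces of $\bm e^\Sigma$ and $(\bm e^u, \bm e^w)$ require the plain energy norm versus the augmented triple-norm: the rule of thumb is that discrete (subscript $h$) components live in the finite element space and get the plain norm, whereas interpolation-error (subscript $I$) components are only piecewise smooth and require the augmented norm so that the boundary/interface terms make sense. Once these trace and norm-bookkeeping issues are settled, the remainder is a routine assembly of Cauchy–Schwarz estimates and the continuity bounds from Lemma~\ref{lem::cont_coerc}.
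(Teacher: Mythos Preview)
Your proposal is correct and follows essentially the same route as the paper: test the error equation \eqref{eq:error} with $(\bm v,\bm z,\bm\tau,\bm\lambda)=(\bm 0,\bm 0,\bm\tau,\bm 0)$, isolate $\mathcal{B}^f(\bm e_h^r,\bm\tau)$, bound the remaining $\mathcal{M}^f$, $\mathcal{D}^f$, $\mathcal{A}_h^f$, $\mathcal{C}^{pf}$, and $\mathcal{B}^f(\bm e_I^r,\cdot)$ terms via the continuity estimates of Lemmas~\ref{lemma:stab-fluid} and~\ref{lem::cont_coerc} together with the trace-inverse inequality \eqref{eq::traceinv} for the interface coupling, and then invoke the inf-sup condition \eqref{eq:infsup}. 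Your careful bookkeeping of plain versus augmented norms for the $h$- and $I$-components, and your identification of the $\mathcal{C}^{pf}$ interface bound as the reason for the extra jump term in the denominator of \eqref{eq:infsup}, are exactly right.
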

\begin{proof}
To prove \eqref{eq:bound-rh}, we start from the error equation \eqref{eq:error} with $\bm v=\bm 0, \bm z=\bm 0, \bm \lambda=\bm 0$ and a generic $\boldsymbol{\tau}\in{\bm S}_h^f$.
Rearranging the terms to isolate $\mathcal B^f({\bm e}^{\bm r}_h, {\bm \tau})$ on one side of the equality, using the continuity results of Lemmas \ref{lemma:stab-fluid} and \ref{lem::cont_coerc} and the trace-inverse inequality \eqref{eq::traceinv} on the interface term $\mathcal{C}^{pf}$, we obtain the following:
% \begin{multline*}
%     \mathcal B^f(\bm{r}_h-\bm{s}_h, \boldsymbol{\tau})
%     \lesssim 
%     (\|\dot{\bm e}_h^{\boldsymbol{\Sigma}}\|_{\text{E}_f} + \|\dot{\bm e}_I^{\boldsymbol{\Sigma}}\|_{\text{E}_f} + \|{\bm e}_h^{\boldsymbol{\Sigma}}\|_{\text{E}_f} + \|{\bm e}_I^{\boldsymbol{\Sigma}}\|_{\text{E}_f} + \|{\bm e}_I^{\bm r}\|_{\mathcal{T}_h^f}) 
%     \|{\boldsymbol{\tau}}\|_{\text{E}_f}
%     \\
%     +\sum_{\kappa\in\mathcal{T}_{h,f}^I}p_{f,\kappa}h_\kappa^{-1/2}\|\boldsymbol{\tau}\|_\kappa(\|\dot{\bm e}_I^{\bm u}\|_{\partial\kappa\cap\Gamma_I} + \|\dot{\bm e}_I^{\bm w}\|_{\partial\kappa\cap\Gamma_I})
%     +\sum_{\kappa\in\mathcal{T}_{h,p}^I}p_{p,\kappa}h_\kappa^{-1/2}\|\boldsymbol{\tau}\|_{\partial\kappa\cap \Gamma_I}(\|\dot{\bm e}_h^{\bm u}\|_{\kappa} + \|\dot{\bm e}_h^{\bm w}\|_{\kappa})
%     \\
%     \lesssim
%     (\|\dot{\bm e}_h^{\boldsymbol{\Sigma}}\|_{\text{E}_f} + \|\dot{\bm e}_I^{\boldsymbol{\Sigma}}\|_{\text{E}_f} + \|{\bm e}_h^{\boldsymbol{\Sigma}}\|_{\text{E}_f} + \|{\bm e}_I^{\boldsymbol{\Sigma}}\|_{\text{E}_f} + \|{\bm e}_I^{\bm r}\|_{\mathcal{T}_h^f}) 
%     \|{\boldsymbol{\tau}}\|_{\text{E}_f}
%     \\
%     +\sum_{\kappa\in\mathcal{T}_{h,f}^I}p_{f,\kappa}h_\kappa^{-1/2}(\|\dot{\bm e}_I^{\bm u}\|_{\partial\kappa\cap\Gamma_I} + \|\dot{\bm e}_I^{\bm w}\|_{\partial\kappa\cap\Gamma_I})\|\boldsymbol{\tau}\|_{\text{E}_f}
%     +\sum_{\kappa\in\mathcal{T}_{h,p}^I}p_{p,\kappa}h_\kappa^{-1/2}\|\boldsymbol{\tau}\|_{\partial\kappa\cap\Gamma_I}(\|\dot{\bm e}_h^{\bm u}\|_{\Omega_p} + \|\dot{\bm e}_h^{\bm w}\|_{\Omega_p}).
% \end{multline*} 
\begin{multline*}
    \mathcal B^f({\bm e}^{\bm r}_h, \boldsymbol{\tau})
    =
    \mathcal{M}^f(\dot{\bm{e}}^\Sigma,\bm{\tau})
+\mathcal{D}^f(\dot{\bm{e}}^\Sigma, \bm \tau) + \mathcal{A}_h^f(\bm{e}^\Sigma, \bm{\tau}) +
\mathcal{C}^{pf} ((\dot{\bm{e}}^u,\dot{\bm{e}}^w),\bm{\tau})
+ \mathcal B^f({\bm e}^{\bm r}_I, \boldsymbol{\tau}) \lesssim 
    \\
    \qquad (\|\dot{\bm e}_h^{\boldsymbol{\Sigma}}\|_{\text{E}_f} + \|{\bm e}_h^{\boldsymbol{\Sigma}}\|_{\text{E}_f} ) 
    \|{\boldsymbol{\tau}}\|_{\text{E}_f}
    +\| \ \chi_f^{1/2}\boldsymbol{\tau}\bm n_p \|^2_{\mathcal{F}_h^I}
    (\|\dot{\bm e}_h^{\bm u}\|_{\text{E}_p} + \|\dot{\bm e}_h^{\bm w}\|_{\text{E}_p}+\|\dot{\bm e}_I^{\bm u}\|_{\text{E}_p} + \|\dot{\bm e}_I^{\bm w}\|_{\text{E}_p})
    \\
    \quad + (\trinorm{\dot{\bm e}_I^{\boldsymbol{\Sigma}}}_{\text{E}_f} + \trinorm{{\bm e}_h^{\boldsymbol{\Sigma}}}_{\text{E}_f} 
    + \|{\bm e}^{\bm r}_I\|_{\Omega_f}) 
    \|{\boldsymbol{\tau}}\|_{\text{E}_f}. 
    %\\\lesssim 
    %(\|\dot{\bm e}_h^{\boldsymbol{\Sigma}}\|_{\text{E}_f} + \|{\bm e}_h^{\boldsymbol{\Sigma}}\|_{\text{E}_f} + \|(\dot{\bm e}^{\bm u}_h,\dot{\bm e}^{\bm w}_h)\|_{\rm E_p}) 
    %\left(
    %\|{\boldsymbol{\tau}}\|_{\text{E}_f}
    %+\| \ \chi_f^{1/2}\llbracket \boldsymbol{\tau}\rrbracket\ \|^2_{\mathcal{F}_h^I}\right)
\end{multline*}
Taking the supremum over $\boldsymbol{\tau}\in\boldsymbol{S}_h^f\setminus\{\boldsymbol{0}\}$ and using the inf-sup condition \eqref{eq:infsup} completes the proof.
\end{proof}

We are now ready to state the main result of this section.
\begin{theorem}[A-priori error estimates] \label{thm::error-estimate}
Let Assumption \ref{ass:mesh} and \ref{ass::2}  and the hypothesis of Theorem \ref{teo:stability} hold and let the exact solution $(\bm u_p,\bm w_p,\bm \Sigma_f, \bm r_f)$ of problem \eqref{eq:weak-form} be such that 
\begin{multline*}
(\bm u_p, \bm w_p) \in C^2((0,T];\bm H^m(\mathcal{T}_h^p)\times\bm H^\ell(\mathcal{T}_h^p)) \,\cap\, C^1([0,T]; \bm{H}^1_0(\Omega_p)\times \bm H_{0,\Gamma_p^D}({\rm div},\Omega_p)),\\
\bm \Sigma_f \in C^1((0,T];\mathbb{H}^n(\mathcal{T}_h^f)) \,\cap\, C^0([0,T]; \mathbb{H}_{0,\Gamma_p^N}({\rm div},\Omega_f)), \; and \; \bm r_f \in  C^0((0,T];\, H^\nu(\mathcal{T}_h^f)^{d^*}),
\end{multline*}
 with $m,\ell,n,\nu \geq 2$ and let $(\bm u_{ph},\bm w_{ph},\bm \Sigma_{fh}, \bm r_{fh})$ such that 
\begin{multline*}
(\bm u_{ph}, \bm w_{ph}) \in C^2([0,T];\bm V_h^p \times\bm V_h^p), \; 
\bm \Sigma_{fh} \in C^1([0,T];\bm S_h^f), \; and \; \bm r_{fh} \in  C^0([0,T];\, \bm \Lambda_h^f),
\end{multline*}
 be the solution of the semi-discrete problem \eqref{eq:weak-dg}, with sufficiently large penalty parameters $c_1$, $c_2$ and $c_3$. 
Then, for any $t\in (0,t]$, the discretization error $\bm E(t)=(\bm e^u, \bm e^w, \bm e^\Sigma)(t)$ satisfies
\begin{align*}
\sup_{t\in(0,T]} \norm{\bm E(t)}_{\rm E}\lesssim &  \sum_{\kappa\in\mathcal{T}_h^p}\left({\frac{h_\kappa^{s_\kappa-1}}{p_{p,\kappa}^{m-3/2}}} \Theta_{\bm u} +
{\frac{h_\kappa^{r_\kappa-1}}{p_{p,\kappa}^{\ell-3/2}}}\Theta_{\bm w}\right) +
\sum_{\kappa\in\mathcal{T}_h^f}\left({\frac{h_\kappa^{q_\kappa-1}}{p_{f,\kappa}^{n-3/2}}} \Theta_{\bm \Sigma}
 + \frac{h_\kappa^{\zeta_\kappa}}{(p_{f,\kappa}-1)^\nu} \Theta_{\bm r}\right) 
\end{align*}
where
\begin{align*}
 \Theta_{\bm u}  & =  \sup_{t\in(0,T]} \left(\norm{\widetilde{\mathcal{E}}\dot{\bm u}(t)}_{m,\mathcal{K}_\kappa}+\norm{\widetilde{\mathcal{E}}\bm u(t)}_{m,\mathcal{K}_\kappa}\right) +
\int_0^T \left( \norm{\widetilde{\mathcal{E}}\ddot{\bm u}(s)}_{m,\mathcal{K}_\kappa}+\norm{\widetilde{\mathcal{E}}\dot{\bm u}(s)}_{m,\mathcal{K}_\kappa} \right) \,ds,  \\
 \Theta_{\bm w}  & =  \sup_{t\in(0,T]} \left(\norm{\widetilde{\mathcal{E}}\dot{\bm w}(t)}_{m,\mathcal{K}_\kappa}+\norm{\widetilde{\mathcal{E}}\bm w(t)}_{m,\mathcal{K}_\kappa}\right) +
\int_0^T \left( \norm{\widetilde{\mathcal{E}}\ddot{\bm w}(s)}_{m,\mathcal{K}_\kappa}+\norm{\widetilde{\mathcal{E}}\dot{\bm w}(s)}_{m,\mathcal{K}_\kappa} \right) \,ds,  \\
\Theta_{\bm \Sigma} & = \sup_{t\in(0,T]} \norm{\widetilde{\mathcal{E}}\bm \Sigma(t) }_{n,\mathcal{K}_\kappa} + \int_0^T \norm{\widetilde{\mathcal{E}}\dot {\bm \Sigma}(s) }_{n,\mathcal{K}_\kappa} \, ds,\\
\Theta_{\bm r} & = \int_0^T \norm{\widetilde{\mathcal{E}}\bm r(s) }_{\nu,\mathcal{K}_\kappa} \, ds,
\end{align*}
and where $s_\kappa = \min(m,p_{p,\kappa}+1), r_\kappa = \min(\ell,p_{p,\kappa}+1), q_\kappa = \min(n,p_{f,\kappa}+1)$ and $\zeta_\kappa = \min(\nu,p_{f,\kappa})$ for any $\kappa \in \mathcal{T}_h$.
Here the hidden constant depends on the material properties but is independent of the discretization parameters.
\end{theorem}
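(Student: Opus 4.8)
The plan is to follow the classical energy-argument route already used in the proof of Theorem~\ref{teo:stability}, but now applied to the discrete error equation \eqref{eq:error}, with the new ingredient being the handling of the rotation error $\bm e^r$ via Lemma~\ref{lem:inf-sup}. First I would split each error into an interpolation part $\bm e_I$ and a discrete part $\bm e_h$ as in the displayed decompositions, so that $\bm E = \bm E_I - \bm E_h$. The quantities $\bm e_I^u,\bm e_I^w,\bm e_I^\Sigma$ are controlled directly by the interpolation estimates of Lemma~\ref{lem::interp2} (and $\bm e_I^r$ by \eqref{eq:interp_est_sigma}), so it remains to bound the discrete components $(\bm e_h^u,\bm e_h^w,\bm e_h^\Sigma)$ in the energy norm $\|\cdot\|_{\rm E}$; the final estimate then follows from the triangle inequality.

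To bound the discrete part, I would test the error equation \eqref{eq:error} with $(\bm v,\bm z,\bm\tau,\bm\lambda)=(\dot{\bm e}_h^u,\dot{\bm e}_h^w,\dot{\bm e}_h^\Sigma,\bm e_h^r)$. Rewriting $\bm e^{\bullet}=\bm e_I^{\bullet}-\bm e_h^{\bullet}$ throughout moves all the $\bm e_h$-terms to the left-hand side, where they assemble—after integration in time over $(0,t)$ and use of the identity $\mathcal B(\bm\psi,\bm\psi)(t)\lesssim\mathcal B(\bm\psi,\bm\psi)(0)+\int_0^t\mathcal B(\dot{\bm\psi},\dot{\bm\psi})$ together with the coercivity bounds \eqref{eq:M-coer}, \eqref{eq:A-coer}, \eqref{eq:AM-coer-fluid} and Lemma~\ref{lem::cont_coerc}—into a lower bound of the form $\|(\bm e_h^u,\bm e_h^w,\bm e_h^\Sigma)(t)\|_{\rm E}^2$ (note the pure constraint terms $\mathcal B^f(\bm r_h,\dot{\bm\Sigma}_h)$ and $-\mathcal B^f(\bm r_h,\dot{\bm\Sigma}_h)$ coming from the symmetric test choice cancel, as in the continuous case). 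On the right-hand side one is left with the interpolation-error terms: the $\mathcal M$, $\mathcal D$, $\mathcal A_h$ contributions are estimated by the continuity bounds (including the augmented-norm continuity of $\mathcal A_h^e,\mathcal B_h^p,\mathcal A_h^f$ from Lemma~\ref{lem::cont_coerc}), the coupling forms $\mathcal C^{pf},\mathcal C^{fp}$ by Cauchy--Schwarz and the trace-inverse inequality \eqref{eq::traceinv} on the interface, and the time-derivative terms by integration by parts in time exactly as in Theorem~\ref{teo:stability}. Crucially, the term $\mathcal B^f(\bm e_h^r,\dot{\bm e}_h^\Sigma)$ surviving on the right must be controlled by $\|\bm e_h^r\|_{\Omega_f}\|\dot{\bm e}_h^\Sigma\|_{\rm E_f}$, and here Lemma~\ref{lem:inf-sup} is invoked to bound $\|\bm e_h^r\|_{\Omega_f}$ by discrete energy-norm quantities plus interpolation errors; the resulting $\|\dot{\bm e}_h^\Sigma\|_{\rm E_f}^2$ is absorbed either into the left-hand side (it appears there as $\int_0^t\mathcal M^f(\dot{\bm e}_h^\Sigma,\dot{\bm e}_h^\Sigma)$) or handled by Gronwall. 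After applying Young's inequality with small parameter to all cross terms, taking the supremum over $t\in(0,T]$ and invoking Gronwall's Lemma, one obtains $\sup_t\|(\bm e_h^u,\bm e_h^w,\bm e_h^\Sigma)(t)\|_{\rm E}\lesssim(\text{interpolation errors of }\bm u,\bm w,\bm\Sigma,\bm r)$, which after substituting the bounds of Lemma~\ref{lem::interp2} and collecting the $\Theta$-quantities yields the claimed estimate.

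The main obstacle I anticipate is the rigorous absorption of the rotation-error term: $\bm e_h^r$ is not directly controlled by coercivity (the form $\mathcal B^f$ is only a constraint, not coercive), so one genuinely needs the inf-sup route of Lemma~\ref{lem:inf-sup}, and the bound there mixes $\dot{\bm e}_h^\Sigma$, $\bm e_h^\Sigma$, $(\dot{\bm e}_h^u,\dot{\bm e}_h^w)$ and interpolation terms. One must check that, after multiplying by $\|\dot{\bm e}_h^\Sigma\|_{\rm E_f}$ and applying Young's inequality, every discrete term produced is either already present with the right sign on the left-hand side (the viscous dissipation $\int_0^t\mathcal M^f(\dot{\bm e}_h^\Sigma,\dot{\bm e}_h^\Sigma)\,ds$ and $\int_0^t\mathcal D^f(\dot{\bm e}_h^\Sigma,\dot{\bm e}_h^\Sigma)\,ds$ give control of part of $\|\dot{\bm e}_h^\Sigma\|_{\rm E_f}$, but not of $\|\rho_f^{-1/2}\nabla_h\cdot\bm e_h^\Sigma\|$—this piece must instead be fed into Gronwall), or is of interpolation type. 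A secondary technical point is the treatment of the extra interface term $\|\chi_f^{1/2}\llbracket\cdot\rrbracket\|_{\mathcal F_h^I}^2$ appearing in the denominator of the inf-sup condition \eqref{eq:infsup}, which is consistent with the energy norm only up to the trace-inverse inequality; keeping track of the corresponding constants throughout is bookkeeping-heavy but does not raise a conceptual difficulty.
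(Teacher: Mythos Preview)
Your proposal is essentially the same strategy as the paper's proof: test \eqref{eq:error} with $(\dot{\bm e}_h^u,\dot{\bm e}_h^w,\dot{\bm e}_h^\Sigma,\bm e_h^r)$, integrate in time, use coercivity and continuity from Lemma~\ref{lem::cont_coerc}, integrate by parts in time the $\mathcal A_h$-- and $\mathcal C^{pf}$--terms to avoid discrete time derivatives on the right, control the interface coupling via the trace-inverse inequality \eqref{eq::traceinv}, invoke Lemma~\ref{lem:inf-sup} for the rotation error, apply Young and Gronwall, and finish with Lemma~\ref{lem::interp2}.

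One small inaccuracy: you state that the surviving constraint term on the right is $\mathcal B^f(\bm e_h^r,\dot{\bm e}_h^\Sigma)$, but you correctly noted just before that this term cancels with its twin. What actually remains (cf.\ the paper's term $T_5$) is $\mathcal B^f(\bm e_I^r,\dot{\bm e}_h^\Sigma)-\mathcal B^f(\bm e_h^r,\dot{\bm e}_I^\Sigma)$; it is the \emph{second} of these, paired with the interpolation error $\dot{\bm e}_I^\Sigma$, that forces the use of Lemma~\ref{lem:inf-sup} to bound $\|\bm e_h^r\|_{\Omega_f}$. After Young's inequality this produces $\epsilon\int_0^t\|\dot{\bm e}_h^\Sigma\|_{\rm E_f}^2$, $\epsilon\int_0^t\|\bm e_h^\Sigma\|_{\rm E_f}^2$, and $\epsilon\int_0^t\|(\dot{\bm e}_h^u,\dot{\bm e}_h^w)\|_{\rm E_p}^2$, which are absorbed into the left-hand side dissipative integrals for $\epsilon$ small enough---exactly the mechanism you anticipate in your final paragraph.
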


\begin{proof}
We consider equation \eqref{eq:error} for $\bm v = \dot{\bm e}_h^u, \bm z = \dot{\bm e}_h^w, \bm \tau = \dot{\bm e}_h^\Sigma$ and $\bm \lambda = \bm e_h^r$ to get 
\begin{multline*}
\circled{1} = \mathcal{M}^p((\ddot{\bm e}_{h}^u,\ddot{\bm{e}}_{h}^w),(\dot{\bm e}_h^u,\dot{\bm e}_h^w)) 
+ \mathcal{D}^p(\dot{\bm{e}}_{h}^w, \dot{\bm{e}}_{h}^w) +  \mathcal{A}^p_h((\bm{e}_{h}^u,\bm{e}_{h}^w),(\dot{\bm e}_h^u,\dot{\bm e}_h^w)) \\ 
+ 
\mathcal{M}^f(\dot{\bm e}_{h}^\Sigma,\dot{\bm e}_h^\Sigma) + \mathcal{D}^f(\dot{\bm e}_{h}^\Sigma, \dot{\bm e}_{h}^\Sigma) + \mathcal{A}_h^f(\bm{e}_{h}^\Sigma, \dot{\bm e}_{h}^\Sigma) % + \mathcal{B}^f(\bm e_{h}^r, \dot{\bm e}_{h}^\Sigma) - \mathcal{B}^f(\bm e_h^r, \dot{\bm e}_{h}^\Sigma)  \\
%+ \mathcal{C}^{pf} ((\dot{\bm{e}}_{h}^u,\dot{\bm{e}}_{h}^w),\dot{\bm e}_{h}^\Sigma)  - \mathcal{C}^{fp} (\dot{\bm e}_{h}^\Sigma,(\dot{\bm{e}}_{h}^u,\dot{\bm{e}}_{h}^w)) 
= \\
%RHS
\mathcal{M}^p((\ddot{\bm e}_{I}^u,\ddot{\bm{e}}_{I}^w),(\dot{\bm e}_h^u,\dot{\bm e}_h^w)) 
+ \mathcal{D}^p(\dot{\bm{e}}_{I}^w, \dot{\bm{e}}_{h}^w) + \mathcal{A}^p_h((\bm{e}_{I}^u,\bm{e}_{I}^w),(\dot{\bm e}_h^u,\dot{\bm e}_h^w)) 
\\ + \mathcal{M}^f(\dot{\bm e}_{I}^\Sigma,\dot{\bm e}_h^\Sigma)
 + \mathcal{D}^f(\dot{\bm e}_{I}^\Sigma, \dot{\bm e}_{h}^\Sigma) + \mathcal{A}_h^f(\bm{e}_{I}^\Sigma, \dot{\bm e}_{h}^\Sigma)   \\ + \mathcal{B}^f(\bm e_{I}^r, \dot{\bm e}_{h}^\Sigma) - \mathcal{B}^f(\bm e_h^r, \dot{\bm e}_{I}^\Sigma)  
+ \mathcal{C}^{pf} ((\dot{\bm{e}}_{I}^u,\dot{\bm{e}}_{I}^w),\dot{\bm e}_{h}^\Sigma)  - \mathcal{C}^{fp} (\dot{\bm e}_{I}^\Sigma,(\dot{\bm{e}}_{h}^u,\dot{\bm{e}}_{h}^w)) = \circled{2}.
\end{multline*}

By integrating $\circled{1}$ and $\circled{2}$ with respect to time in $(0,t)$ we obtain 

\begin{multline}\label{eq:error_lhs}
\int_0^t \circled{1} \,ds = \frac12 \mathcal{M}^p((\dot{\bm e}_{h}^u,\dot{\bm{e}}_{h}^w),(\dot{\bm e}_h^u,\dot{\bm e}_h^w)) 
+ \int_0^t \mathcal{D}^p(\dot{\bm{e}}_{h}^w, \dot{\bm{e}}_{h}^w)\,ds + \frac12  \mathcal{A}^p_h((\bm{e}_{h}^u,\bm{e}_{h}^w),(\bm e_h^u,\bm e_h^w)) \\ 
+ 
\int_0^t \mathcal{M}^f(\dot{\bm e}_{h}^\Sigma,\dot{\bm e}_h^\Sigma)\, ds + \int_0^t \mathcal{D}^f(\dot{\bm e}_{h}^\Sigma, \dot{\bm e}_{h}^\Sigma)\, ds + \frac12 \mathcal{A}_h^f(\bm{e}_{h}^\Sigma, \bm e_{h}^\Sigma),
\end{multline}
since $\bm e_h^u(0) = \bm e_h^w(0) = \dot{\bm e}_h^u(0) = \dot{\bm e}_h^w(0) = \bm e_h^\Sigma(0) = \bm 0$ and 
\begin{multline*}
\int_0^t \circled{2} \, ds = \int_0^t
\overbrace{    \Big( \mathcal{M}^p((\ddot{\bm e}_{I}^u,\ddot{\bm{e}}_{I}^w),(\dot{\bm e}_h^u,\dot{\bm e}_h^w)) 
+  \mathcal{D}^p(\dot{\bm{e}}_{I}^w, \dot{\bm{e}}_{h}^w) - \mathcal{A}^p_h((\dot{\bm{e}}_{I}^u,\dot{\bm{e}}_{I}^w),(\bm e_h^u,\bm e_h^w)) \Big)}^{T_1(s)} \, ds 
\\ + \overbrace{\mathcal{A}^p_h((\bm{e}_{I}^u,\bm{e}_{I}^w),(\bm e_h^u,\bm e_h^w))}^{T_2} \\ + \int_0^t \overbrace{\Big(\mathcal{M}^f(\dot{\bm e}_{I}^\Sigma,\dot{\bm e}_h^\Sigma)
 + \mathcal{D}^f(\dot{\bm e}_{I}^\Sigma, \dot{\bm e}_{h}^\Sigma) -\mathcal{A}_h^f(\dot{\bm{e}}_{I}^\Sigma, \bm e_{h}^\Sigma) \Big)}^{T_3(s)} \, ds + \overbrace{\mathcal{A}_h^f(\bm{e}_{I}^\Sigma, \bm e_{h}^\Sigma)}^{T_4}   \\ + \int_0^t \overbrace{\Big(\mathcal{B}^f(\bm e_{I}^r, \dot{\bm e}_{h}^\Sigma) - \mathcal{B}^f(\bm e_h^r, \dot{\bm e}_{I}^\Sigma)\Big)}^{T_5(s)}  \, ds \\
+ \int_0^t \Big(\overbrace{\mathcal{C}^{pf} ((\dot{\bm{e}}_{I}^u,\dot{\bm{e}}_{I}^w),\dot{\bm e}_{h}^\Sigma)}^{T_6(s)}  - \overbrace{\mathcal{C}^{fp} (\dot{\bm e}_{I}^\Sigma,(\dot{\bm{e}}_{h}^u,\dot{\bm{e}}_{h}^w))}^{T_7(s)} \Big) \, ds, 
\end{multline*}
respectively.
Next, we treat separately the terms $T_i$, $i=1,...,7$ as follows. For positive $\epsilon_i$, $i=1,...,4$, we employ Cauchy-Schwarz and Young's inequalities as follows  \begin{multline}\label{eq:T_1}
    \int_0^t T_1(s) \, ds + T_2 \lesssim \int_0^t \trinorm{(\dot{\bm e}_{I}^u,\dot{\bm{e}}_{I}^w)}_{\rm E_p}\norm{(\bm e_{h}^u,\bm{e}_{h}^w)}_{\rm E_p} \, ds \\
    + \frac{1}{2\epsilon_1} \int_0^t \mathcal{D}^p(\dot{\bm{e}}_{I}^w, \dot{\bm{e}}_{I}^w) \, ds +  \frac{\epsilon_1}{2} \int_0^t \mathcal{D}^p(\dot{\bm{e}}_{h}^w, \dot{\bm{e}}_{h}^w) \, ds \\
    + \frac{1}{2\epsilon_2} \mathcal{A}^p_h((\bm{e}_{I}^u,\bm{e}_{I}^w),(\bm e_I^u,\bm e_I^w))
    +  \frac{\epsilon_2}{2} \mathcal{A}^p_h((\bm{e}_{h}^u,\bm{e}_{h}^w),(\bm e_h^u,\bm e_h^w)),
\end{multline}
and
\begin{multline}
    \int_0^t T_3(s) \, ds + T_4 \lesssim  \frac{1}{2\epsilon_3} \int_0^t 
    \Big(\mathcal{M}^f(\dot{\bm e}_{I}^\Sigma,\dot{\bm e}_I^\Sigma) + \mathcal{D}^f(\dot{\bm e}_{I}^\Sigma,\dot{\bm e}_I^\Sigma)\Big) \, ds \\ + 
    \frac{\epsilon_3}{2} \int_0^t 
    \Big(\mathcal{M}^f(\dot{\bm e}_{h}^\Sigma,\dot{\bm e}_h^\Sigma) +     \mathcal{D}^f(\dot{\bm e}_{h}^\Sigma,\dot{\bm e}_h^\Sigma) \Big) \, ds 
    +  \int_0^t \trinorm{\dot{\bm e}^\Sigma_I}_{{\rm E_f}}\norm{\bm e^\Sigma_h}_{{\rm E_f}}  \, ds
    \\
    + \frac{1}{2\epsilon_4} \mathcal{A}_h^f(\bm{e}_{I}^\Sigma, \bm e_{I}^\Sigma)
    +  \frac{\epsilon_4}{2} \mathcal{A}_h^f(\bm{e}_{h}^\Sigma, \bm e_{h}^\Sigma).
\end{multline}

For $T_5$, we employ Cauchy-Schwarz and Young inequalities and then estimate \eqref{eq:bound-rh} of Lemma \ref{lem:inf-sup} to obtain, for positive $\epsilon_5,\epsilon_6$,
\begin{multline}\label{eq:T5-init}
    \int_0^tT_5(s)\,ds \lesssim
    \int_0^t\frac{1}{2\epsilon_5}\|{\bm e}_I^r\|_{\Omega_f}^2 + \int_0^t\frac{\epsilon_5}{2}\|\dot{\bm e}_h^{\bm \Sigma}\|_{\Omega_f}^2
    + \int_0^t\frac{1}{2\epsilon_6}\|\dot{\bm e}_I^{\bm \Sigma}\|_{\Omega_f}^2 + \int_0^t\frac{\epsilon_6}{2}\|{\bm e}_h^r\|_{\Omega_f}^2
    \\
    \lesssim
    \int_0^t\frac{1}{2\epsilon_5}\|{\bm e}_I^r\|_{\Omega_f}^2 + \int_0^t\frac{1}{2\epsilon_6}\|\dot{\bm e}_I^{\bm \Sigma}\|_{\Omega_f}^2
+    \int_0^t\left(\frac{\epsilon_5}{2}+\frac{\epsilon_6}{2(\beta_h^f)^2}\right)\|\dot{\bm e}_h^{\bm \Sigma}\|_{\rm E_f}^2
    \\+ 
      \int_0^t\frac{\epsilon_6}{2(\beta_h^f)^2}\|{\bm e}_h^{\boldsymbol{\Sigma}}\|_{\rm E_f}^2
    + \int_0^t\frac{\epsilon_6}{2(\beta_h^f)^2}\|(\dot{\bm e}_h^{\bm u},\dot{\bm e}_h^{\bm w})\|_{\rm E_p}^2
    \overset{\rm def}{=}
    \mathcal J_{\mathcal B}^t.
\end{multline}

Finally, for the coupling terms in $\mathcal{C}^{fp}(\cdot)$ we use the inverse inequality \eqref{eq::traceinv} together with Assumption \ref{ass:mesh} to get
\begin{multline}
\int_0^t T_7(s) \,ds 
 = \int_0^t \Big(<\dot{\bm{e}}_I^\Sigma \bm{n}_p \wedge \bm{n}_p, (\alpha \dot{\bm{e}}_{h}^u + \dot{\bm{e}}_{h}^w) \cdot \bm{n}_p>_{\Gamma_I} + <\dot{\bm{e}}_I^\Sigma \bm{n}_p \wedge \bm{n}_p, \dot{\bm{e}}_{h}^u \wedge \bm{n}_p>_{\Gamma_I} \Big) \, ds
 \\
 \lesssim
\int_0^t \sum_{\kappa_p\in\mathcal{T}_{h,I}^p,\,
\kappa_f\in\mathcal{T}_{h,I}^f} \norm{\dot{\bm{e}}_I^\Sigma}_{\partial \kappa_f}\Big(\norm{ \dot{\bm{e}}_{h}^u}_{\partial \kappa_p} + \norm{\dot{\bm{e}}_{h}^w}_{\partial \kappa_p} \Big) \, ds
\\ \lesssim
\int_0^t \Big(
 \sum_{\kappa \in\mathcal{T}_{h,I}^f} {p_{f,\kappa} h_{\kappa}^{-1/2}} \norm{\dot{\bm{e}}_I^\Sigma}_{\partial \kappa}\Big) \Big(\norm{ \dot{\bm{e}}_{h}^u}_{\Omega_p} + \norm{\dot{\bm{e}}_{h}^w}_{\Omega_p} \Big) \, ds\\
\stackrel{\text{def}}{=} \int_0^t \mathcal{I}_h^f(\dot{\bm e}_I^\Sigma)\Big(\norm{ \dot{\bm{e}}_{h}^u}_{\Omega_p} + \norm{\dot{\bm{e}}_{h}^w}_{\Omega_p} \Big) \, ds,
\end{multline}
being $\mathcal{T}_{h,I}^p$ and. $\mathcal{T}_{h,I}^f$ the sets of mesh elements sharing an edge with $\Gamma_I$. For $T_6$,
we use the integration by parts formula and we reason as before 
\begin{multline}\label{eq:T_6}
\int_0^t T_6(s) \,ds = \mathcal{C}^{pf} ((\dot{\bm{e}}_{I}^u,\dot{\bm{e}}_{I}^w),\bm e_{h}^\Sigma) - \int_0^t \mathcal{C}^{pf} ((\ddot{\bm{e}}_{I}^u,\ddot{\bm{e}}_{I}^w),\bm e_{h}^\Sigma) \, ds \\ 
\lesssim
 \sum_{\kappa \in\mathcal{T}_{h,p}^I} {p_{p,\kappa} h_{\kappa}^{-1/2}} \Big( \norm{ \dot{\bm{e}}_{I}^u}_{\partial \kappa} + \norm{\dot{\bm{e}}_{I}^w}_{\partial \kappa} \Big)
 \norm{\bm{e}_h^\Sigma}_{{\rm E_f}} \\
 + \int_0^t \sum_{\kappa \in\mathcal{T}_{h,p}^I} {p_{p,\kappa} h_{\kappa}^{-1/2}} \Big( \norm{ \ddot{\bm{e}}_{I}^u}_{\partial \kappa} + \norm{\ddot{\bm{e}}_{I}^w}_{\partial \kappa} \Big)
 \norm{\bm{e}_h^\Sigma}_{{\rm E_f}} \, ds
\\
 \stackrel{\text{def}}{=}  \mathcal{I}_h^p(\dot{\bm e}_I^u,\dot{\bm e}_I^w) \norm{\bm{e}_h^\Sigma}_{{\rm E_f}}  + \int_0^t \mathcal{I}_h^p(\ddot{\bm e}_I^u,\ddot{\bm e}_I^w) \norm{\bm{e}_h^\Sigma}_{{\rm E_f}} \, ds,
\end{multline}
where we also use the norm $\norm{\cdot}_{\rm E_f}$ to bound the $L^2$-norm $\norm{\cdot}_{\Omega_f}$.
Now, by putting together \eqref{eq:error_lhs} with \eqref{eq:T_1}--\eqref{eq:T_6} and choosing $\epsilon_i$ for $i=1,\ldots,6$, we obtain
\begin{multline}
\circled{3}  =  \mathcal{M}^p((\dot{\bm e}_{h}^u,\dot{\bm{e}}_{h}^w),(\dot{\bm e}_h^u,\dot{\bm e}_h^w)) 
+ \int_0^t \mathcal{D}^p(\dot{\bm{e}}_{h}^w, \dot{\bm{e}}_{h}^w)\,ds +  \mathcal{A}^p_h((\bm{e}_{h}^u,\bm{e}_{h}^w),(\bm e_h^u,\bm e_h^w)) \\ 
+ 
\int_0^t \mathcal{M}^f(\dot{\bm e}_{h}^\Sigma,\dot{\bm e}_h^\Sigma)\, ds + \int_0^t \mathcal{D}^f(\dot{\bm e}_{h}^\Sigma, \dot{\bm e}_{h}^\Sigma)\, ds + \mathcal{A}_h^f(\bm{e}_{h}^\Sigma, \bm e_{h}^\Sigma)
%\\
%{\color{magenta} +\int_0^t\left[\mathcal{B}^f(\bm e_{I}^r, \dot{\bm e}_{h}^\Sigma) - \mathcal{B}^f(\bm e_h^r, \dot{\bm e}_{I}^\Sigma)\right]\, ds}
\\ \lesssim
\int_0^t \trinorm{(\dot{\bm e}_{I}^u,\dot{\bm{e}}_{I}^w)}_{\rm E_p}\norm{(\bm e_{h}^u,\bm{e}_{h}^w)}_{\rm E_p} \, ds 
    +  \int_0^t \mathcal{D}^p(\dot{\bm{e}}_{I}^w, \dot{\bm{e}}_{I}^w) \, ds  \\
+ \mathcal{A}^p_h((\bm{e}_{I}^u,\bm{e}_{I}^w),(\bm e_I^u,\bm e_I^w)) + 
 \int_0^t 
    \Big(\mathcal{M}^f(\dot{\bm e}_{I}^\Sigma,\dot{\bm e}_I^\Sigma) + \mathcal{D}^f(\dot{\bm e}_{I}^\Sigma,\dot{\bm e}_I^\Sigma)\Big) \, ds \\ 
    +  \int_0^t \trinorm{\dot{\bm e}^\Sigma_I}_{{\rm E_f}}\norm{\bm e^\Sigma_h}_{{\rm E_f}}  \, ds
    + \mathcal{A}_h^f(\bm{e}_{I}^\Sigma, \bm e_{I}^\Sigma)
     + \int_0^t \mathcal{I}_h^f(\dot{\bm e}_I^\Sigma)\Big(\norm{ \dot{\bm{e}}_{h}^u}_{\Omega_p} + \norm{\dot{\bm{e}}_{h}^w}_{\Omega_p} \Big) \, ds \\
      + \mathcal{I}_h^p(\dot{\bm e}_I^u,\dot{\bm e}_I^w) \norm{\bm{e}_h^\Sigma}_{{\rm E_f}}  + \int_0^t \mathcal{I}_h^p(\ddot{\bm e}_I^u,\ddot{\bm e}_I^w) \norm{\bm{e}_h^\Sigma}_{{\rm E_f}} \, ds
    {\color{black}+\mathcal J_{\mathcal B}^t}
      = \circled{4} + 
    {\color{black}\mathcal J_{\mathcal B}^t}.
\end{multline}
To bound $\circled{3}$ from below we reason as for the proof of Therorem \ref{teo:stability}  to have 
\begin{equation*}
    |\circled{3}| \gtrsim  \| (\bm e_h^u, \bm e_h^w)(t) \|^2_{{\rm E}_p} + \| \bm e_h^\Sigma (t) \|^2_{{\rm E_f}}.
\end{equation*}
Next, we rearrange the terms for  $\circled{4}$ and write
\begin{multline}\label{eq:circled_4}
\circled{4}  =
\int_0^t \trinorm{(\dot{\bm e}_{I}^u,\dot{\bm{e}}_{I}^w)}_{\rm E_p}\norm{(\bm e_{h}^u,\bm{e}_{h}^w)}_{\rm E_p} \, ds 
   +  \int_0^t \trinorm{\dot{\bm e}^\Sigma_I}_{{\rm E_f}}\norm{\bm e^\Sigma_h}_{{\rm E_f}}  \, ds \\ 
     + \int_0^t \mathcal{I}_h^f(\dot{\bm e}_I^\Sigma)\Big(\norm{ \dot{\bm{e}}_{h}^u}_{\Omega_p} + \norm{\dot{\bm{e}}_{h}^w}_{\Omega_p} \Big) \, ds 
       + \int_0^t \mathcal{I}_h^p(\ddot{\bm e}_I^u,\ddot{\bm e}_I^w) \norm{\bm{e}_h^\Sigma}_{{\rm E_f}} \, ds \\
   +  \int_0^t \mathcal{D}^p(\dot{\bm{e}}_{I}^w, \dot{\bm{e}}_{I}^w) \, ds  
+ \mathcal{A}^p_h((\bm{e}_{I}^u,\bm{e}_{I}^w),(\bm e_I^u,\bm e_I^w)) \\ + 
 \int_0^t 
    \Big(\mathcal{M}^f(\dot{\bm e}_{I}^\Sigma,\dot{\bm e}_I^\Sigma) + \mathcal{D}^f(\dot{\bm e}_{I}^\Sigma,\dot{\bm e}_I^\Sigma)\Big) \, ds 
    + \mathcal{A}_h^f(\bm{e}_{I}^\Sigma, \bm e_{I}^\Sigma)  + \mathcal{I}_h^p(\dot{\bm e}_I^u,\dot{\bm e}_I^w) \norm{\bm{e}_h^\Sigma}_{{\rm E_f}}. 
\end{multline}
We bound all terms by using the definition of the norms, except the last one for which we employ Young inequality for $\epsilon >0$ 
\begin{multline}
\circled{4}  \lesssim 
\int_0^t \Big( \trinorm{(\dot{\bm e}_{I}^u,\dot{\bm{e}}_{I}^w)}_{\rm E_p}
+ \trinorm{\dot{\bm e}^\Sigma_I}_{{\rm E_f}}
 + \mathcal{I}_h^f(\dot{\bm e}_I^\Sigma)
 + \mathcal{I}_h^p(\ddot{\bm e}_I^u,\ddot{\bm e}_I^w)
\Big)
\Big( \norm{(\bm e_{h}^u,\bm{e}_{h}^w)}_{\rm E_p} 
 + \norm{\bm e^\Sigma_h}_{{\rm E_f}} \Big) \, ds \\ 
   +  \int_0^t \mathcal{D}^p(\dot{\bm{e}}_{I}^w, \dot{\bm{e}}_{I}^w) \, ds  
+ \mathcal{A}^p_h((\bm{e}_{I}^u,\bm{e}_{I}^w),(\bm e_I^u,\bm e_I^w))  + 
 \int_0^t 
    \Big(\mathcal{M}^f(\dot{\bm e}_{I}^\Sigma,\dot{\bm e}_I^\Sigma) + \mathcal{D}^f(\dot{\bm e}_{I}^\Sigma,\dot{\bm e}_I^\Sigma)\Big) \, ds 
   \\ + \mathcal{A}_h^f(\bm{e}_{I}^\Sigma, \bm e_{I}^\Sigma)  + \frac{1}{2\epsilon}\mathcal{I}_h^p(\dot{\bm e}_I^u,\dot{\bm e}_I^w)^2 + \frac{\epsilon}{2}\norm{\bm e^\Sigma_h}_{{\rm E_f}}^2  
    \\
    \lesssim
\int_0^t \Big( \trinorm{(\dot{\bm e}_{I}^u,\dot{\bm{e}}_{I}^w)}_{\rm E_p}
+ \trinorm{\dot{\bm e}^\Sigma_I}_{{\rm E_f}}
 + \mathcal{I}_h^f(\dot{\bm e}_I^\Sigma)
 + \mathcal{I}_h^p(\ddot{\bm e}_I^u,\ddot{\bm e}_I^w)
\Big)
\Big( \norm{(\bm e_{h}^u,\bm{e}_{h}^w)}_{\rm E_p} 
 + \norm{\bm e^\Sigma_h}_{{\rm E_f}} \Big) \, ds \\ 
   +  \int_0^t \Big( \trinorm{(\dot{\bm e}_{I}^u,\dot{\bm{e}}_{I}^w)}_{\rm E_p}^2 + \trinorm{\dot{\bm e}^\Sigma_I}_{{\rm E_f}}^2 \Big)\, ds  
+ \trinorm{(\bm e_{I}^u,\bm{e}_{I}^w)}_{\rm E_p}^2 + \trinorm{\bm e^\Sigma_I}_{{\rm E_f}}^2 \\ + \frac{1}{2\epsilon}\mathcal{I}_h^p(\dot{\bm e}_I^u,\dot{\bm e}_I^w)^2 + \frac{\epsilon}{2}\norm{\bm e^\Sigma_h}_{{\rm E_f}}^2
= \circled{5}.
\end{multline}
{\color{black} Moreover, we observe that this bound for $\circled{4}$ is an upper bound also for $\circled{4}+\mathcal J_\mathcal{B}^t\lesssim \circled{5}+\int_0^t\|{\boldsymbol{e}}_I^r\|_{\Omega_f}^2\,ds$, for sufficiently small $\epsilon_5,\epsilon_6$.}
Finally, we consider $\epsilon$ small enough and take the supremum over $(0,t]$ to get 
\begin{multline*}
\sup_{t\in(0,T]}\| (\bm e_h^u, \bm e_h^w)(t) \|^2_{{\rm E}_p} + \| \bm e_h^\Sigma (t) \|^2_{{\rm E_f}}
    \lesssim
\int_0^T \mathcal{J}_1(s)
\Big( \norm{(\bm e_{h}^u,\bm{e}_{h}^w)}_{\rm E_p} 
 + \norm{\bm e^\Sigma_h}_{{\rm E_f}} \Big) \, ds \\ 
   +  \int_0^T \mathcal{J}_2(s) \, ds  
+ \sup_{t\in(0,T]}\mathcal{J}_3(t) {\color{black}+\int_0^T\|{\boldsymbol{e}}_I^r\|_{\Omega_f}^2\,ds},
\end{multline*}
where
\begin{align*}
\mathcal{J}_1 & =  \trinorm{(\dot{\bm e}_{I}^u,\dot{\bm{e}}_{I}^w)}_{\rm E_p}
+ \trinorm{\dot{\bm e}^\Sigma_I}_{{\rm E_f}}
 + \mathcal{I}_h^f(\dot{\bm e}_I^\Sigma)
 + \mathcal{I}_h^p(\ddot{\bm e}_I^u,\ddot{\bm e}_I^w) , \\
 \mathcal{J}_2 & =  \trinorm{(\dot{\bm e}_{I}^u,\dot{\bm{e}}_{I}^w)}_{\rm E_p}^2 + \trinorm{\dot{\bm e}^\Sigma_I}_{{\rm E_f}}^2, \\
 \mathcal{J}_3 &  = \trinorm{(\bm e_{I}^u,\bm{e}_{I}^w)}_{\rm E_p}^2 + \trinorm{\bm e^\Sigma_I}_{{\rm E_f}}^2  + \mathcal{I}_h^p(\dot{\bm e}_I^u,\dot{\bm e}_I^w)^2.
\end{align*}
 By applying the Gronwall Lemma we obtain
\begin{equation*}
\sup_{t\in(0,T]}\| (\bm e_h^u, \bm e_h^w, \bm e_h^\Sigma) (t) \|_{\rm E}
    \lesssim 
\int_0^T \mathcal{J}_1(s) \, ds + 
  \Big( \int_0^T \mathcal{J}_2(s) \, ds  
+ \sup_{t\in(0,T]}\mathcal{J}_3(t)
+\int_0^T\|{\boldsymbol{e}}_I^r\|_{\Omega_f}^2\,ds
\Big)^{\frac12}   
\end{equation*}
We conclude the proof by using the results in Lemma \ref{lem::interp2} and estimate the terms $\mathcal{I}_h^f(\cdot)$ and $\mathcal{I}_h^p(\cdot,\cdot)$ by using  \cite[Lemma 33]{CangianiDongGeorgoulisHouston_2017} as follows
\begin{gather*}
\mathcal{I}_h^f(\dot{\bm e}_I^\Sigma)^2
\lesssim
\sum_{\kappa\in\mathcal{T}^f_{h,I}}\frac{h_\kappa^{2q_\kappa-2}}{p_{f,\kappa}^{2n-3}}
\norm{\widetilde{\mathcal{E}}\dot{\bm \Sigma_f}}_{n,\mathcal{K}_\kappa}^2,
\\
\mathcal{I}_h^p(\dot{\bm e}_I^u, \dot{\bm e}_I^w)^2
\lesssim
\sum_{\kappa\in\mathcal{T}^p_{h,I}}\frac{h_\kappa^{2s_\kappa-2}}{p_{p,\kappa}^{2m-3}}
\norm{\widetilde{\mathcal{E}}\dot{\bm u}_p}_{m,\mathcal{K}_\kappa}^2	+
\sum_{\kappa\in\mathcal{T}^p_{h,I}}\frac{h_\kappa^{2r_\kappa-2}}{p_{p,\kappa}^{2\ell-3}}
\norm{\widetilde{\mathcal{E}}\dot{\bm w}_p}^2_{\ell,\mathcal{K}_\kappa}, \\
\mathcal{I}_h^p(\ddot{\bm e}_I^u, \ddot{\bm e}_I^w)^2
\lesssim
\sum_{\kappa\in\mathcal{T}^p_{h,I}}\frac{h_\kappa^{2s_\kappa-2}}{p_{p,\kappa}^{2m-3}}
\norm{\widetilde{\mathcal{E}}\ddot{\bm u}_p}_{m,\mathcal{K}_\kappa}^2	+
\sum_{\kappa\in\mathcal{T}^p_{h,I}}\frac{h_\kappa^{2r_\kappa-2}}{p_{p,\kappa}^{2\ell-3}}
\norm{\widetilde{\mathcal{E}}\ddot{\bm w}_p}^2_{\ell,\mathcal{K}_\kappa}
\end{gather*}
\end{proof}

\section{Time integration}\label{sec:time_int}

To integrate in time \eqref{eq:weak-dg}
we introduce in $\Omega_p \times (0,T]$ the auxiliary variables $\bm v_{ph} = \dot{\bm u}_{ph}$ and $\bm z_{ph} = \dot{\bm w}_{ph}$ and write the following (modified) formulation: for any $t \in (0,T]$ find 
$(\bm u_{ph}, \bm w_{ph}, \bm v_{ph}, \bm z_{ph}, \bm \Sigma_{fh}, \bm r_{fh})(t) \in  \bm W_h  = \bm V_h^p \times \bm V_h^p \times \bm V_h^p \times \bm V_h^p \times \bm S_h^f \times \bm \Lambda_h^f$ s.t.  
\begin{multline*}
(\dot{\bm u}_{ph}-\bm v_{ph} ,  \hat{\bm v})_{\Omega_p} + (\dot{\bm w}_{ph}-\bm z_{ph}, \hat{ \bm z})_{\Omega_p} + 
\mathcal{M}^p((\dot{\bm v}_{ph},\dot{\bm{z}}_{ph}),(\hat{\bm u},\hat{\bm w})) + 
 \mathcal{D}^p(\bm{z}_{ph}, \hat{\bm w})  \\+ \mathcal{A}^p_h((\bm{u}_{ph},\bm{w}_{ph}),(\hat{\bm{u}},\hat{\bm{w}})) 
 + \mathcal{M}^f(\dot{\bm{\Sigma}}_{fh},\hat{\bm{\tau}}) +  \mathcal{D}^f(\dot{\bm{\Sigma}}_{fh}, \hat{\bm \tau}) + \mathcal{A}_h^f(\bm{\Sigma}_{fh}, \hat{\bm{\tau}}) 
\\ + \mathcal{B}^f(\bm r_{fh}, \hat{\bm \tau}) - \mathcal{B}^f(\hat{\bm \lambda}, \dot{\bm \Sigma}_{fh}) 
+ \mathcal{C}^{pf} ((\dot{\bm{u}}_{ph},\dot{\bm{w}}_{ph}),\hat{\bm{\tau}})  - \mathcal{C}^{fp} (\dot{\bm{\Sigma}}_{fh},(\hat{\bm{u}},\hat{\bm{v}})) = \mathcal{F}(\hat{\bm{u}},\hat{\bm{w}},\hat{\bm{\tau}}) 
\end{multline*}
for any $(\hat{\bm{u}},\hat{\bm{w}}, \hat{\bm{v}},\hat{\bm{z}},\hat{\bm{\tau}}, \hat{\bm \lambda}) \in \bm W_h$, 
with initial conditions $\bm{u}_{ph}(0)=\bm u_{0h}, \bm{v}_{ph}(0)=\bm{v}_{0h}, \bm{w}_{ph}(0) = \bm{w}_{0,h}, \bm{z}_{ph}(0)=\bm{z}_{0h}$ and $\bm{\Sigma}_{fh}(0)=\bm{0}$.

By fixing a basis for the spaces $\bm V_h^p, \bm S_h^f$ and $\bm \Lambda_h^f$, and denoting by $\bm X(t) = (\bm U_p,\bm W_p, \bm V_p, \bm Z_p, \bm S_f, \bm R_f)^T \in \mathbb{R}^{ndof}$ the vector of the
$ndof$ expansion coefficients in the chosen basis, the above system can be written equivalently as
\begin{multline}
    \begin{bmatrix}
        I^p & 0 & 0 & 0 & 0 & 0 \\
        0 & I^p & 0 & 0 & 0 & 0 \\ 
        0 & 0 & M^p_{\rho} & M^p_{\rho_f} & -(N_{\alpha} + T)^T & 0 \\
        0 & 0 & M^p_{\rho_f} & M^p_{\rho_w} & -N^T & 0 \\
        0 & 0 & 0 & 0 & M^f + D^f_\delta & 0 \\
        0 & 0 & 0 & 0 & {B^f}^T & 0 
    \end{bmatrix}
    \begin{bmatrix}
        \dot{\bm U}_p \\ \dot{\bm W}_p \\ \dot{\bm V}_p \\ \dot{\bm Z}_p \\ \dot{\bm S}_f \\    \dot{\bm R}_f
    \end{bmatrix}
    \\ +     
    \begin{bmatrix}
        0 & 0 & -I^p & 0 & 0 & 0 \\
        0 & 0 & 0 & -I^p & 0 & 0 \\ 
        A^e + B^p_{\beta^2} & B^p_\beta  & 0 & 0 & 0 & 0 \\
        B^p_\beta & B^p & 0 & D^p_{\eta \kappa} + D^p_\gamma & 0 & 0 \\
        0 & 0 & N_\alpha + T & N & A^f & B^f \\
        0 & 0 & 0 & 0 & 0 & 0 
    \end{bmatrix}
    \begin{bmatrix}
    {\bm U}_p \\ {\bm W}_p \\ {\bm V}_p \\ {\bm Z}_p \\ {\bm S}_f \\ {\bm R}_f
    \end{bmatrix} = 
        \begin{bmatrix}
    {\bm 0} \\ {\bm 0} \\ {\bm F}_p \\ {\bm G}_p \\ {\bm H}_f \\ {\bm 0}
    \end{bmatrix},\label{eq:algebraic-system}
\end{multline}
with $\bm X(0) = \bm X_0 = (\bm U_0,\bm W_0, \bm V_0, \bm Z_0, \bm 0, \bm 0)^T$. 
In \eqref{eq:algebraic-system} the block  matrices
\begin{equation*}
    M^p = \begin{bmatrix}
        M^p_{\rho} & M^p_{\rho_f} \\
        M^p_{\rho_f} & M^p_{\rho_w}
    \end{bmatrix}
    \quad {\rm and} \quad 
    A^p = \begin{bmatrix}
        A^e + B^p_{\beta^2} & B^p_\beta \\
        B^p_\beta & B^p
    \end{bmatrix},
\end{equation*}
are the algebraic representation of the bilinear forms $\mathcal{M}^p(\cdot, \cdot)$ and $\mathcal{A}^p_h(\cdot,\cdot)$, respectively.
The damping matrix $D^p_{\eta \kappa} + D_\gamma^p, $ is associated with $\mathcal{D}^p(\cdot,\cdot)$, while
$M^f, D_\delta^f$, $A^f$ and $B^f$ to $\mathcal{M}^f(\cdot,\cdot), \mathcal{D}^f(\cdot,\cdot), \mathcal{A}_h^f(\cdot,\cdot)$ and $\mathcal{B}^f(\cdot,\cdot)$, respectively.
$N, N_\alpha$ and $T$ are related to the coupling terms in $\mathcal{C}^{fp}(\cdot, \cdot)$. 
Now, we rewrite problem \eqref{eq:algebraic-system} in a compact form as: \begin{equation}\label{eq:system-alg-compact}
    \begin{cases}
        {\rm M} \dot{\bm X}(t) + {\rm A} \bm X(t) = \bm F(t), &  t \in (0,T],\\
        \bm X(0) = \bm X_0, &
    \end{cases}
\end{equation}
and  partition the interval $[0, T]$ by introducing a time step $\Delta t > 0$ and define the following finite sequence of temporal steps
$t^k = k \Delta t$ for $k = 0, ...., N_T$, being $N_T = T /\Delta t$. Finally, we integrate system \eqref{eq:system-alg-compact} by using a $\theta$-method scheme with $\theta \in [1/2, 1]$, cf.~\cite{qss2007}, and get for $k=1,...,N_T$
\begin{equation}\label{eq:teta-metodo}
    ({\rm M} + \Delta t \theta{\rm A})\bm X^{k+1} =  ({\rm M} - \Delta t (1-\theta){\rm A})\bm X^{k} + \Delta t(\theta \bm F^{k+1} + (1-\theta) \bm F^{k}), 
\end{equation}
with $\bm X^k =  \bm{X}(t^k)$.

\section{Numerical results}\label{sec:nume_res}
The results obtained in this section have been achieved through the  \textsc{Matlab} code \texttt{lymph} \cite{antonietti2024lymphdiscontinuouspolytopalmethods}. The verification of the numerical scheme is presented in the 
first and second test for which we consider problems \eqref{eq:biot-system} and \eqref{eq:stokes-sigma} with the following modified coupling conditions on $\Gamma_I \times (0,T]$: 
\begin{equation}\label{eq:coupling-conditions-modified}
    \begin{cases}
        (\alpha \dot{\bm{u}}_p + \dot{\bm{w}}_p) \cdot \bm{n}_p  = \bm{u}_f \cdot \bm{n}_p - f_I^1, & \\
        \dot{\bm{\Sigma}}_f \bm{n}_p \cdot \bm{n}_p = 
        \gamma \dot{\bm{w}}_p \cdot \bm{n}_p - p_p + f_I^2, &  \\
        \alpha \dot{\bm{\Sigma}}_f \bm{n}_p \cdot \bm{n}_p
        = \bm{\sigma}_p \bm{n}_p \cdot \bm{n}_p - f_I^3, & \\
        \dot{\bm{\Sigma}}_f \bm{n}_p \wedge \bm{n}_p = \bm{\sigma}_p \bm{n}_p \wedge \bm{n}_p - f_I^4, & \\
        \dot{\bm{\Sigma}}_f \bm{n}_p \wedge \bm{n}_p = 
        \delta (\bm{u}_f - \dot{\bm{u}}_p) \wedge \bm{n}_p - f_I^5, & \\
    \end{cases}
\end{equation}
with $f_I^i$, for $i=1,...,5$ properly defined to obtain a reference  solution.
The last example concerns an application of geophysical interest.

\begin{figure}
\begin{minipage}{\textwidth}
\begin{minipage}{0.5\textwidth}
    \centering
    \includegraphics[width=1\linewidth]{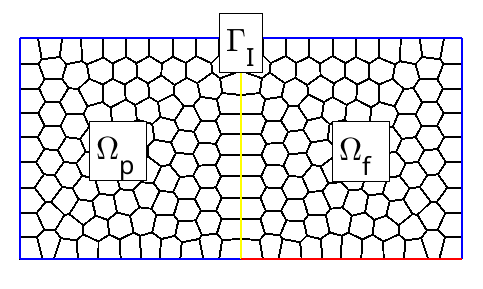}
    \caption{Test Case 1 and 2: polygonal mesh with Dirichlet (blue), Neumann (red), and interface (yellow) boundaries.}
    \label{fig:domain_convergence}
    \end{minipage}
 \hspace*{1cm}    
\begin{minipage}{0.35\textwidth}
\centering
\begin{tabular}{|c|c|c|}
\hline
\textbf{Field}      & \textbf{Test 1} & \textbf{Test 2} \\ \hline
$\rho_f$, $\rho_s$ & 1  & 1                         \\ \hline
$\lambda$, $\mu$   & 1,1 &   1, 0.5                        \\ \hline
$a$   & 1               & 1            \\ \hline
$\phi$   & 0.5           & 0.5                \\ \hline
$\eta/\kappa$             & 1   & 1                \\ \hline
$\rho_w$           & 2      & 2                 \\ \hline
$\beta$, $m$         & 1       & 1                \\ \hline
$\mu_f$ & 0.5 & 0.5 \\ \hline
%$\rho_f$ & 1 \\ \hline 
$\alpha$ & 1 & 2                       \\
\hline
$\delta$ & 1 & 1                       \\
\hline
$\gamma$ & 0 & 0\\ \hline
\end{tabular}
\caption{Test case 1 and 2.  Physical parameters.}
\label{param}
\end{minipage}
\end{minipage}    
\end{figure}

\subsection{Test case 1}
We consider $\Omega = (-1,1) \times (0,1)$ with $\Omega_p = (-1,0) \times (0,1)$ and  $\Omega_f = (0,1) \times (0,1)$ such that $\Gamma_I = \{0 \}\times (0,1)$, cf. Figure \ref{fig:domain_convergence}.  We fix the final time $T=0.1$, $\Delta t = 0.001$ and chose $\theta=\frac12$ in \eqref{eq:teta-metodo} (Crank-Nicolson scheme). We select the interface parameters $\alpha=\delta =1$, and $\gamma=0$, set the 
analytical solution as
\begin{equation}\label{eq:test1_analytical_sol}
    \bm u_p = \begin{bmatrix}
     x \frac{t^2}{4} \\ t x^2 \frac{y}{2} - y \frac{t^3}{6}
    \end{bmatrix},
    \; 
    \bm w_p = \begin{bmatrix}
       - t x \frac{y^2}{2} + x \frac{t^3}{6} \\ y \frac{t^2}{4} 
    \end{bmatrix},
    \; \bm \Sigma_f = \begin{bmatrix}
    \frac{t^3}{6} - t^2 \frac{(y^2-x^2)}{4} & 0 \\ 0 & - \frac{t^3}{6} - t^2 \frac{(y^2-x^2)}{4}
    \end{bmatrix},
\end{equation}
and compute the remaining data accordingly.
In particular, in \eqref{eq:coupling-conditions-modified} we have $f_I^1 = f_I^2 = f_I^4 = f^I_5 = 0$ while $f_I^3 = -\frac{1}{6}t^3 + \frac{3}{4}t^2$. The physical parameters considered are listed in Figure \ref{param}. In Figure \ref{fig:testcase1_convergence} (left), resp. (right), we report the computed error $\norm{(\bm e^u,\bm e^w)}_{\rm E_p}$, resp.  $\norm{ \bm e^\Sigma}_{\rm E_f}$, as a function of the mesh size $h_p$, resp. $h_f$, by choosing a polynomial degree equal to 1 and 2. The results agree with the theoretical estimates shown in Theorem \ref{thm::error-estimate}.
In Figure \ref{fig:testcase1_convergence_p} we plot the computed errors $\norm{(\bm e^u,\bm e^w)}_{\rm E_p}$ and $\norm{ \bm e^\Sigma}_{\rm E_f}$ with respect to the polynomial degree $p_p = p_f = 1,...5$ for different choices of the the time step: $\Delta t = 0.001$ (left) and $\Delta t = 0.0001$ (right). As expected, since the analytical solution is polynomial, cf. \eqref{eq:test1_analytical_sol}, the error curves decay exponentially until the threshold $\mathcal{O}(\Delta t^2)$, given by the time integration scheme \eqref{eq:teta-metodo}, is reached.

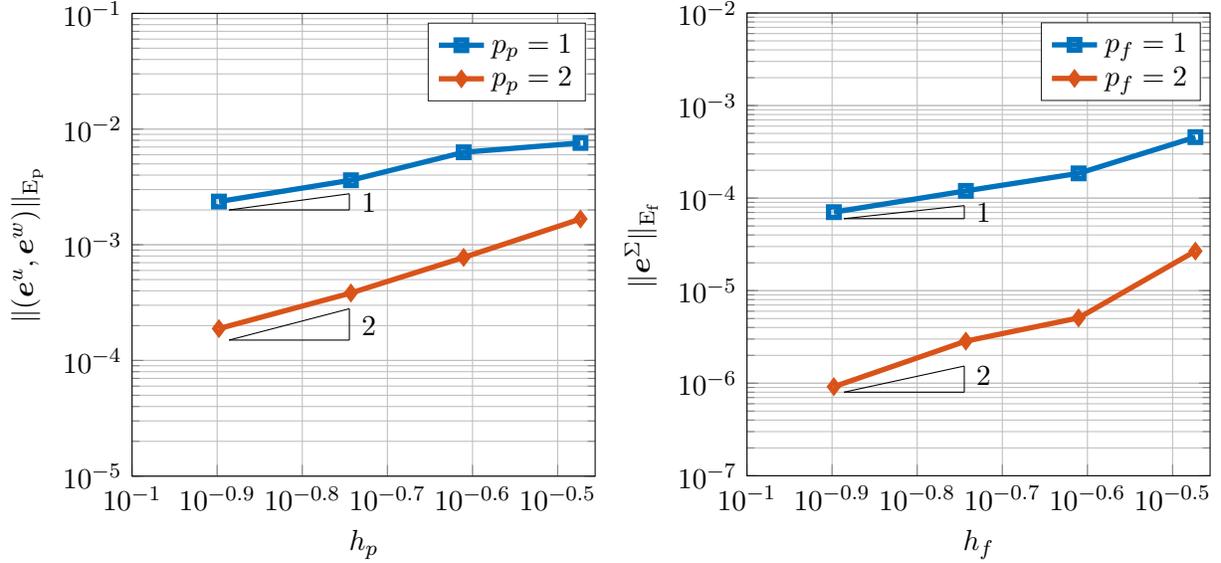
\begin{figure}
    \centering
 % This file was created by matlab2tikz.
%
%The latest updates can be retrieved from
%  http://www.mathworks.com/matlabcentral/fileexchange/22022-matlab2tikz-matlab2tikz
%where you can also make suggestions and rate matlab2tikz.
%
\definecolor{mycolor1}{rgb}{0.00000,0.44700,0.74100}%
\definecolor{mycolor2}{rgb}{0.85000,0.32500,0.09800}%
\definecolor{mycolor3}{rgb}{0.92900,0.69400,0.12500}%
\begin{tikzpicture}

\begin{axis}[%
width=0.35\textwidth,
height=0.35\textwidth,
at={(0.78in,0.521in)},
scale only axis,
xmode=log,
xmin=0.1,
xmax=0.35,
xlabel style={font=\color{black}},
xlabel={$h_p$},
xminorticks=true,
ymode=log,
ymin=1e-05,
ymax=0.1,
yminorticks=true,
ylabel style={font=\color{black}},
ylabel={$\norm{(\bm e^u, \bm e^w)}_{\rm E_p}$},
axis background/.style={fill=white},
xmajorgrids,
xminorgrids,
ymajorgrids,
yminorgrids,
legend style={legend cell align=left, align=left, draw=white!15!black}
]

\addplot [color=mycolor1, line width=2.0pt, mark=square, mark options={solid, mycolor1}]
  table[row sep=crcr]{%
0.336355009488299	7.591887589746592e-03\\
0.245201900765537	6.302001906734867e-03\\
0.180773859749347	3.612722981843438e-03\\
0.126486924607499	2.363823568826487e-03\\
};
\addlegendentry{$p_p=1$}

\addplot [color=mycolor2, line width=2.0pt,mark=diamond, mark options={solid, mycolor2}]
  table[row sep=crcr]{%
0.336355009488299	1.670529602307839e-03\\
0.245201900765537	7.768656972746108e-04\\
0.180773859749347	3.831395354815264e-04\\
0.126486924607499	1.884747579919900e-04\\
};
\addlegendentry{$p_p=2$}

\addplot [color=black, forget plot]
  table[row sep=crcr]{%
0.13	2.e-03\\
0.18	2.76e-03\\
};

\addplot [color=black, forget plot]
  table[row sep=crcr]{%
0.18	2.e-03\\
0.18	2.76e-03\\
};
\addplot [color=black, forget plot]
  table[row sep=crcr]{%
0.13	2.e-03\\
0.18	2.e-03\\
};

\node[right, align=left, text=black, font=\normalsize]
at (axis cs:0.181,2.3e-3) {1};

\addplot [color=black, forget plot]
  table[row sep=crcr]{%
0.13	1.5e-04\\
0.18	2.8e-04\\
};

\addplot [color=black, forget plot]
  table[row sep=crcr]{%
0.18	1.5e-04\\
0.18	2.8e-04\\
};
\addplot [color=black, forget plot]
  table[row sep=crcr]{%
0.13	1.5e-04\\
0.18	1.5e-04\\
};

\node[right, align=left, text=black, font=\normalsize]
at (axis cs:0.181,2.e-4) {2};

\end{axis}
\end{tikzpicture}%
  % This file was created by matlab2tikz.
%
%The latest updates can be retrieved from
%  http://www.mathworks.com/matlabcentral/fileexchange/22022-matlab2tikz-matlab2tikz
%where you can also make suggestions and rate matlab2tikz.
%
\definecolor{mycolor1}{rgb}{0.00000,0.44700,0.74100}%
\definecolor{mycolor2}{rgb}{0.85000,0.32500,0.09800}%
\definecolor{mycolor3}{rgb}{0.92900,0.69400,0.12500}%
\begin{tikzpicture}

\begin{axis}[%
width=0.35\textwidth,
height=0.35\textwidth,
at={(0.78in,0.521in)},
scale only axis,
xmode=log,
xmin=0.1,
xmax=0.35,
xminorticks=true,
xlabel style={font=\color{black}},
xlabel={$h_f$},
ymode=log,
ymin=1e-07,
ymax=0.01,
yminorticks=true,
axis background/.style={fill=white},
xmajorgrids,
xminorgrids,
ymajorgrids,
yminorgrids,
ylabel style={font=\color{black}},
ylabel={$\norm{\bm e^\Sigma}_{\rm E_f}$},
legend style={legend cell align=left, align=left, draw=white!15!black}
]

\addplot [color=mycolor1, line width=2.0pt, mark=square, mark options={solid, mycolor1}]
  table[row sep=crcr]{%
0.336355009488299     4.559216955591511e-04\\
0.245201900765537     1.851609617315513e-04\\
0.180773859749347     1.197463557603900e-04\\
0.126486924607499     7.054989781683092e-05\\
};
\addlegendentry{$p_f=1$}

\addplot [color=mycolor2, line width=2.0pt, mark=diamond, mark options={solid, mycolor2}]
  table[row sep=crcr]{%
0.336355009488299	2.677377345533187e-05\\
0.245201900765537	5.073554481429559e-06\\
0.180773859749347	2.849855015942675e-06\\
0.126486924607499	9.172592107167155e-07\\
};
\addlegendentry{$p_f=2$}

\addplot [color=black, forget plot]
  table[row sep=crcr]{%
0.13	6.e-05\\
0.18	8.3e-05\\
};

\addplot [color=black, forget plot]
  table[row sep=crcr]{%
0.18	6.e-05\\
0.18	8.3e-05\\
};
\addplot [color=black, forget plot]
  table[row sep=crcr]{%
0.13	6.e-05\\
0.18	6.e-05\\
};

\node[right, align=left, text=black, font=\normalsize]
at (axis cs:0.181,7.e-5) {1};

\addplot [color=black, forget plot]
  table[row sep=crcr]{%
0.13	8.e-07\\
0.18	1.533e-06\\
};

\addplot [color=black, forget plot]
  table[row sep=crcr]{%
0.18	8.e-07\\
0.18	1.533e-06\\
};
\addplot [color=black, forget plot]
  table[row sep=crcr]{%
0.13	8.e-07\\
0.18	8.e-07\\
};

\node[right, align=left, text=black, font=\normalsize]
at (axis cs:0.181,1.2e-6) {2};

\end{axis}
\end{tikzpicture}%
  \caption{Test Case 1. Left: log-log plot of the computed error $\norm{(\bm e^u, \bm e^w)}_{\rm E_p}$ as a function of the mesh size $h_p$ for $p_p = 1,2$. Right: log-log plot of the computed error
$\norm{\bm e^\Sigma}_{\rm E_f}$ as a function of the mesh size $h_f$ for $p_f = 1,2$. Final time $T=0.1$ and $\Delta t = 0.001$. }
    \label{fig:testcase1_convergence}
\end{figure}

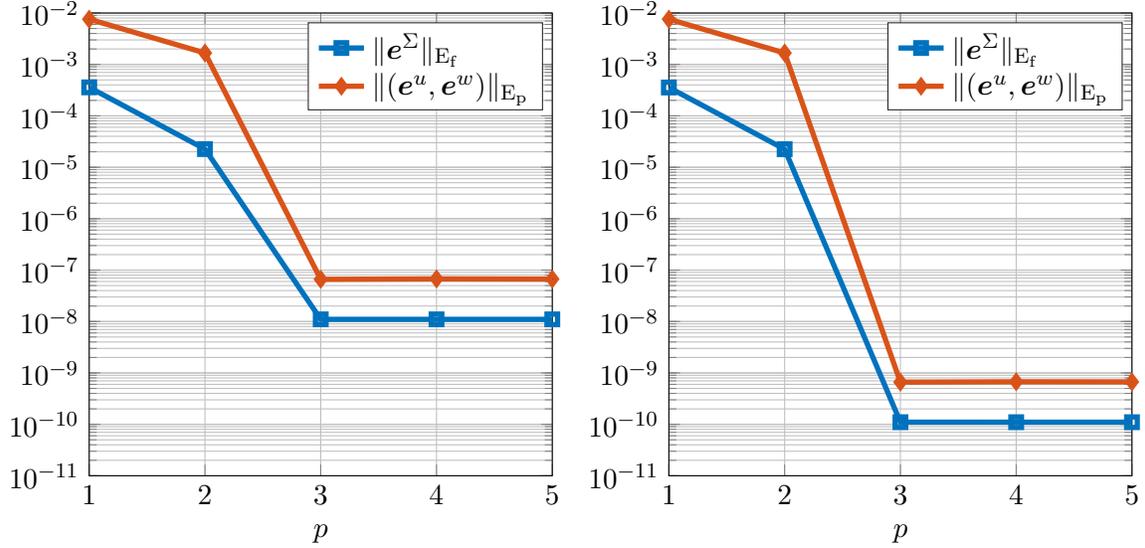
\begin{figure}
    \centering
 % This file was created by matlab2tikz.
%
%The latest updates can be retrieved from
%  http://www.mathworks.com/matlabcentral/fileexchange/22022-matlab2tikz-matlab2tikz
%where you can also make suggestions and rate matlab2tikz.
%
\definecolor{mycolor1}{rgb}{0.00000,0.44700,0.74100}%
\definecolor{mycolor2}{rgb}{0.85000,0.32500,0.09800}%
\begin{tikzpicture}

\begin{axis}[%
width=0.35\textwidth,
height=0.35\textwidth,
at={(0.78in,0.521in)},
scale only axis,
xmin=1,
xmax=5,
xlabel={$p$},
ymode=log,
ymin=1e-11,
ymax=0.01,
yminorticks=true,
axis background/.style={fill=white},
xmajorgrids,
ymajorgrids,
yminorgrids,
legend style={legend cell align=left, align=left, draw=white!15!black}
]
\addplot [color=mycolor1, line width=2.0pt, mark=square, mark options={solid, mycolor1}]
  table[row sep=crcr]{%
1	0.000358455335373836\\
2	2.25654917023717e-05\\
3	1.10208593080805e-08\\
4	1.10142968184293e-08\\
5	1.10129600298562e-08\\
};
\addlegendentry{$\norm{\bm e^\Sigma}_{\rm E_f}$}

\addplot [color=mycolor2, line width=2.0pt, mark=diamond, mark options={solid, mycolor2}]
  table[row sep=crcr]{%
1	0.00759188758974659\\
2	0.00167052960230784\\
3	6.6189100259971e-08\\
4	6.74381612220654e-08\\
5	6.69473629227672e-08\\
};
\addlegendentry{$\norm{(\bm e^u, \bm e^w)}_{\rm E_p}$}

\end{axis}
\end{tikzpicture}%
 % This file was created by matlab2tikz.
%
%The latest updates can be retrieved from
%  http://www.mathworks.com/matlabcentral/fileexchange/22022-matlab2tikz-matlab2tikz
%where you can also make suggestions and rate matlab2tikz.
%
\definecolor{mycolor1}{rgb}{0.00000,0.44700,0.74100}%
\definecolor{mycolor2}{rgb}{0.85000,0.32500,0.09800}%
\begin{tikzpicture}

\begin{axis}[%
width=0.35\textwidth,
height=0.35\textwidth,
at={(0.78in,0.521in)},
scale only axis,
xmin=1,
xmax=5,
xlabel={$p$},
ymode=log,
ymin=1e-11,
ymax=0.01,
yminorticks=true,
axis background/.style={fill=white},
xmajorgrids,
ymajorgrids,
yminorgrids,
legend style={legend cell align=left, align=left, draw=white!15!black}
]
\addplot [color=mycolor1, line width=2.0pt, mark=square, mark options={solid, mycolor1}]
  table[row sep=crcr]{%
1	0.000356081386333717\\
2	2.24591266397099e-05\\
3	1.10165721943702e-10\\
4	1.10102563629454e-10\\
5	1.10097624777985e-10\\
};
\addlegendentry{$\norm{\bm e^\Sigma}_{\rm E_f}$}

\addplot [color=mycolor2, line width=2.0pt, mark=diamond, mark options={solid, mycolor2}]
  table[row sep=crcr]{%
1	0.00759235229431997\\
2	0.00166991831022002\\
3	6.61661368134932e-10\\
4	6.73085688898942e-10\\
5	6.70093174348001e-10\\
};
\addlegendentry{$\norm{(\bm e^u, \bm e^w)}_{\rm E_p}$}

\end{axis}
\end{tikzpicture}%
  \caption{Test Case 1. Semi-log plot of the computed errors $\norm{(\bm e^u, \bm e^w)}_{\rm E_p}$ and $\norm{\bm e^\Sigma}_{\rm E_f}$ as a function of the polynomial degree $p=p_p = p_f$ fixing the number of mesh element equal to $100$. Final time $T=0.1$ and time step $\Delta t = 0.001$ (left), $\Delta t = 0.0001$ (right). }
    \label{fig:testcase1_convergence_p}
\end{figure}

\subsection{Test case 2}
With the same setup of the previous test case and using the parameters in Table \ref{param}, we consider the following analytical solution:
\begin{equation*}
    \bm u_p = e^{-t} \begin{bmatrix}
     \sin(x-y) \\ \sin(x-y) 
    \end{bmatrix},
    \; 
    \bm w_p = - \bm u_p,
    \; \bm \Sigma_f = (e^{-t}-1)  \begin{bmatrix}
    \cos(x-y) & 0 \\ 0 & - \cos(x-y)
    \end{bmatrix},
\end{equation*}
where $\bm \Sigma^f$ is obtained by selecting 
\begin{equation*}
    \bm u_f = -e^{-t} \begin{bmatrix}
     \sin(x-y) \\ \sin(x-y) 
    \end{bmatrix},
    \; {\rm and} \;
    p_f = 0 \; {\rm in } \; \Omega_f.  
\end{equation*}
The remaining data are computed accordingly, and in particular, we set $f_I^1 = f_I^4 = f_I^5 = 0$ while $f_I^2 = -e^{-t} \cos(y)$ and $f_I^3 = 3e^{-t}\cos(y)$.
We report in Figure \ref{fig:testcase2_convergence} (left) the computed error $\norm{(\bm e^u, \bm e^w, \bm e^\Sigma)}_{\rm E}$, as a function of the mesh size $h = \max(h_p,h_f)$, for a polynomial degree $p=p_p=p_f$ ranging from 1 to 4. The results agree with the theoretical results of Theorem \ref{thm::error-estimate}.
In Figure \ref{fig:testcase2_convergence} the computed error $\norm{(\bm e^u, \bm e^w, \bm e^\Sigma)}_{\rm E}$ is shown as a function of the polynomial degree $p=p_p = p_f = 1,...5$ fixing the number of mesh element equal to 100. Also in this case the numerical results are aligned with the theoretical estimates in Theorem \ref{thm::error-estimate}.

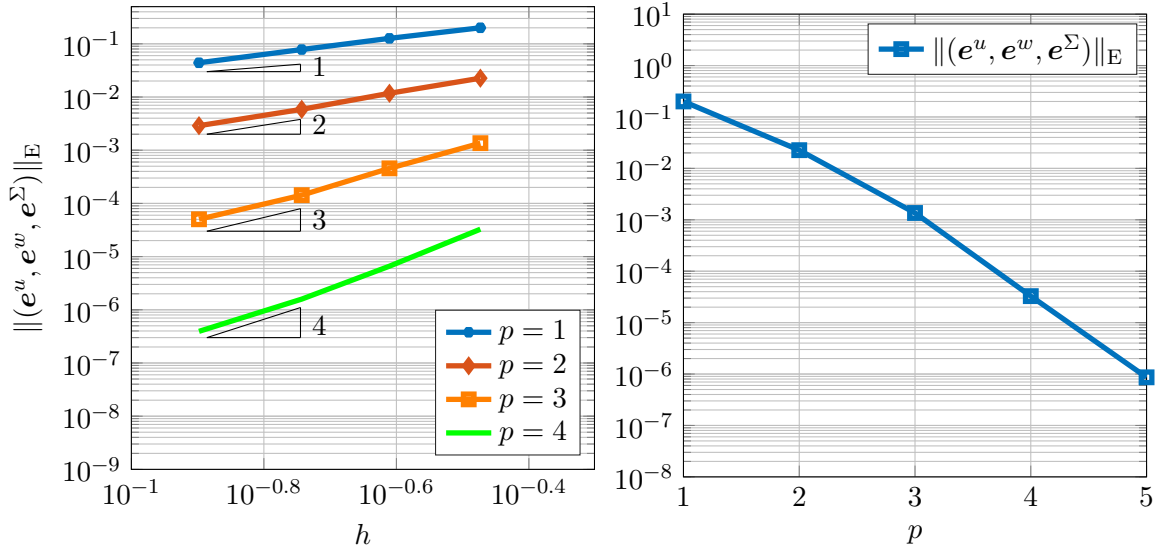
\begin{figure}
    \centering
 % This file was created by matlab2tikz.
%
%The latest updates can be retrieved from
%  http://www.mathworks.com/matlabcentral/fileexchange/22022-matlab2tikz-matlab2tikz
%where you can also make suggestions and rate matlab2tikz.
%
\definecolor{mycolor1}{rgb}{0.00000,0.44700,0.74100}%
\definecolor{mycolor2}{rgb}{0.85000,0.32500,0.09800}%
\definecolor{mycolor3}{rgb}{0.92900,0.69400,0.12500}%
\begin{tikzpicture}

\begin{axis}[%
width=0.35\textwidth,
height=0.35\textwidth,
at={(0.78in,0.521in)},
scale only axis,
xmode=log,
xmin=0.1,
xmax=0.5,
xlabel style={font=\color{black}},
xlabel={$h$},
xminorticks=true,
ymode=log,
ymin=1e-09,
ymax=0.5,
yminorticks=true,
ylabel style={font=\color{black}},
ylabel={$\norm{(\bm e^u, \bm e^w, \bm e^\Sigma)}_{\rm E}$},
axis background/.style={fill=white},
xmajorgrids,
xminorgrids,
ymajorgrids,
yminorgrids,
legend style={legend cell align=left, align=left, draw=white!15!black},
legend pos=south east
]

\addplot [color=mycolor1, line width=2.0pt, mark=asterisk, mark options={solid, mycolor1}]
  table[row sep=crcr]{%
0.336355009488299	0.200385383178282\\
0.245201900765537	0.126678127662751\\
0.180773859749347	0.0780792041867877\\
0.126486924607499	0.0440455800319966\\
};
\addlegendentry{$p=1$}

\addplot [color=mycolor2, line width=2.0pt,mark=diamond, mark options={solid, mycolor2}]
  table[row sep=crcr]{%
0.336355009488299	0.0225977020118877\\
0.245201900765537	0.0117550778280885\\
0.180773859749347	0.00587143901658651\\
0.126486924607499	0.00289099806032648\\
};
\addlegendentry{$p=2$}

\addplot [color=orange, line width=2.0pt, mark=square, mark options={solid, orange}]
  table[row sep=crcr]{%
0.336355009488299	0.00135668598601958\\
0.245201900765537	0.000456274914108075\\
0.180773859749347	0.00014286591083415\\
0.126486924607499	5.03407224263044e-05\\
};
\addlegendentry{$p=3$}

\addplot [color=green, line width=2.0pt, mark=triangle-, mark options={solid, green}]
  table[row sep=crcr]{%
0.336355009488299	3.27502567648369e-05\\
0.245201900765537	6.60928144590893e-06\\
0.180773859749347	1.58814787890381e-06\\
0.126486924607499	3.93462671739523e-07\\
};
\addlegendentry{$p=4$}

\addplot [color=black, forget plot]
  table[row sep=crcr]{%
0.13	3.e-02\\
0.18	0.0415\\
};

\addplot [color=black, forget plot]
  table[row sep=crcr]{%
0.18	3.e-02\\
0.18	0.0415\\
};
\addplot [color=black, forget plot]
  table[row sep=crcr]{%
0.13	3.e-02\\
0.18	3.e-02\\
};

\node[right, align=left, text=black, font=\normalsize]
at (axis cs:0.181,3.8e-2) {1};

\addplot [color=black, forget plot]
  table[row sep=crcr]{%
0.13	2e-03\\
0.18	0.0038\\
};

\addplot [color=black, forget plot]
  table[row sep=crcr]{%
0.18	2e-03\\
0.18	0.0038\\
};
\addplot [color=black, forget plot]
  table[row sep=crcr]{%
0.13	2e-03\\
0.18	2e-03\\
};

\node[right, align=left, text=black, font=\normalsize]
at (axis cs:0.181,3.e-3) {2};

\addplot [color=black, forget plot]
  table[row sep=crcr]{%
0.13	3e-05\\
0.18	7.9636e-05\\
};

\addplot [color=black, forget plot]
  table[row sep=crcr]{%
0.18	3e-05\\
0.18	7.9636e-05\\
};
\addplot [color=black, forget plot]
  table[row sep=crcr]{%
0.13	3e-05\\
0.18	3e-05\\
};

\node[right, align=left, text=black, font=\normalsize]
at (axis cs:0.181,5.e-5) {3};

\addplot [color=black, forget plot]
  table[row sep=crcr]{%
0.13	3e-07\\
0.18	1.1027e-06\\
};

\addplot [color=black, forget plot]
  table[row sep=crcr]{%
0.18	3e-07\\
0.18	1.1027e-06\\
};
\addplot [color=black, forget plot]
  table[row sep=crcr]{%
0.13	3e-07\\
0.18	3e-07\\
};

\node[right, align=left, text=black, font=\normalsize]
at (axis cs:0.181,5.e-7) {4};

\end{axis}
\end{tikzpicture}%
  % This file was created by matlab2tikz.
%
%The latest updates can be retrieved from
%  http://www.mathworks.com/matlabcentral/fileexchange/22022-matlab2tikz-matlab2tikz
%where you can also make suggestions and rate matlab2tikz.
%
\definecolor{mycolor1}{rgb}{0.00000,0.44700,0.74100}%
\definecolor{mycolor2}{rgb}{0.85000,0.32500,0.09800}%
\begin{tikzpicture}

\begin{axis}[%
width=0.35\textwidth,
height=0.35\textwidth,
at={(0.78in,0.521in)},
scale only axis,
xmin=1,
xmax=5,
xlabel={$p$},
ymode=log,
ymin=1e-8,
ymax=10,
yminorticks=true,
axis background/.style={fill=white},
xmajorgrids,
ymajorgrids,
yminorgrids,
legend style={legend cell align=left, align=left, draw=white!15!black}
]
\addplot [color=mycolor1, line width=2.0pt, mark=square, mark options={solid, mycolor1}]
  table[row sep=crcr]{%
1	0.200385383178282\\
2	0.0225977020118877\\
3	0.00135668598601065\\
4	3.27502568228775e-05\\
5	8.6339051248388e-07\\
};
\addlegendentry{$\norm{(\bm e^u, \bm e^w,\bm e^\Sigma)}_{\rm E}$}

\end{axis}
\end{tikzpicture}%
  \caption{Test Case 2. Left: log-log plot of the computed error $\norm{(\bm e^u, \bm e^w, \bm e^\Sigma)}_{\rm E}$ as a function of the mesh size $h = \max(h_p,h_f)$ for $p = p_p = p_f = 1,2,3,4$. Right: semi-log plot of the computed error
$\norm{(\bm e^u, \bm e^w, \bm e^\Sigma)}_{\rm E}$ as a function of the polynomial degree $p=p_p = p_f$ fixing the number of mesh element equal to $100$. Final time $T=0.1$ and time step $\Delta t = 0.001$. }
    \label{fig:testcase2_convergence}
\end{figure}

\subsection{Test case 3}

In this last example, we apply our method to a problem similar to the one presented in \cite{LiYotov2022} which is motivated by the coupling of surface and subsurface hydrological systems. On the domain $\Omega = (0, 2) \times (-1, 1)$, 
we associate the upper half to $\Omega_f$ and the lower half to $\Omega_p$. This can be interpreted as a surface flow (lake or river) modeled by the Stokes problem over a poroelastic aquifer, governed by the Biot system.
In each subdomain, we consider 800 polygonal elements, see Figure \ref{fig:domain_phys}, and polynomial degrees $p_p = p_f = 3$ for a final simulation time $T=1.5~s$ and time step $\Delta t=0.01~s$. The appropriate interface conditions are enforced along the interface $\Gamma_I = \{y = 0\}$.
We consider two cases with different values of $ \eta/\kappa$, $m$, $\lambda_p$ and $\delta$, as described in Figure \ref{param_phys}.
\begin{figure}
\begin{minipage}{\textwidth}
\begin{minipage}{0.65\textwidth}
    \centering
    \includegraphics[width=0.8\linewidth]{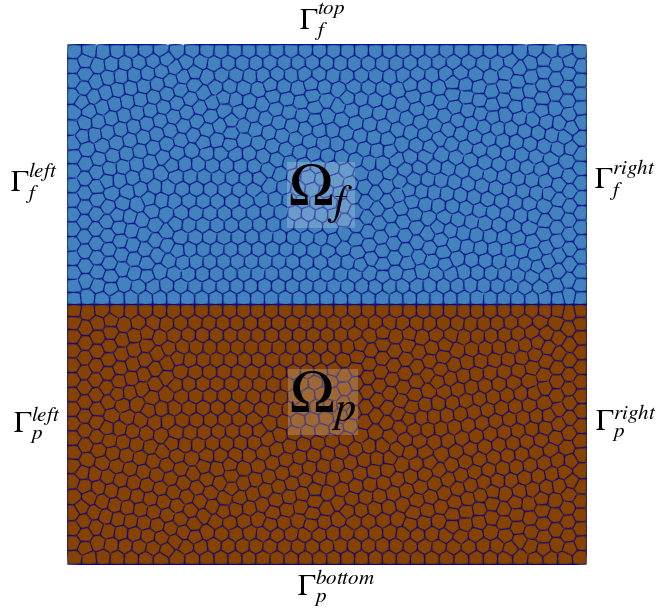}
    \caption{Test Case 3. Fluid $\Omega_f = (0,2)\times(0,1)$ and poroelastic $\Omega_p = (0,2)\times (-1,0)$ domains. Polygonal mesh with 1600 elements.}
    \label{fig:domain_phys}
    \end{minipage}
\hfill
\begin{minipage}{0.3\textwidth}
\centering
\begin{tabular}{|c|c|c|}
\hline
\textbf{Field}      & \textbf{Set A} & \textbf{Set B} \\ \hline
$\rho_f$, $\rho_s$ & 1  & 1                         \\ \hline
$\lambda$  & 1 &   $10^6$                        \\ \hline
$\mu$   & 1 &   1                        \\ \hline
$a$   & 1               & 1            \\ \hline
$\phi$   & 0.5           & 0.5                \\ \hline
$\eta/\kappa$             & 1   & $10^4$                \\ \hline
$\rho_w$           & 2      & 2                 \\ \hline
$\beta$         & 1      & 1        \\ \hline        $m$         & 1      & $10^4$                \\ \hline
$\mu_f$ & 0.5 & 0.5 \\ \hline
%$\rho_f$ & 1 \\ \hline 
$\alpha$ & 1 & 1                       \\
\hline
$\delta$ & 1 & 100                       \\
\hline
$\gamma$ & 1 & 1\\ \hline
\end{tabular}
\caption{Test case 3.  Physical parameters.}
\label{param_phys}
\end{minipage}
\end{minipage}    
\end{figure}

The body forces and external source are zero, as well as the initial conditions. The flow is driven by a parabolic fluid velocity imposed on the left boundary of the fluid region. The corresponding boundary conditions are as follows:
\begin{equation*}
    \begin{cases}
        \nabla \cdot \bm \Sigma_f = h(t)(-40y(y-1),0)^T& on \; \Gamma_f^{left} \times (0,T],\\
        \nabla \cdot \bm \Sigma_f = \bm 0 & on \; \Gamma_f^{top} \times (0,T], \\
        \Sigma_f \bm n_f = \bm 0 & on \; \Gamma_f^{right} \times (0,T],\\
        p_p  = 0 & on \; \Gamma_p^{bottom} \times (0,T],\\
       \bm \sigma_p \bm n_p = \bm 0 & on \; \Gamma_p^{bottom} \times (0,T],\\
         \bm u_p = \bm 0 & on \; \Gamma_p^{left} \cup \Gamma_p^{right} \times (0,T],\\
         \bm w_p\cdot \bm n_p =  0 & on \; \Gamma_p^{left} \cup \Gamma_p^{right} \times (0,T],\\         
    \end{cases}
\end{equation*}
where $h(t) = 1/(1 + e^{-10(t-1)})$. 
For each case, we present the plots of computed velocities and pressure 
at final time $T$.
In particular, in $\Omega_f$ we compute $\bm u_f$ using \eqref{eq:1st-eq-rewrite} and $p_f = -\frac12 {\rm tr}(\dot{\bm \Sigma_f})$, while in $\Omega_p$ we use \eqref{eq:constitutive-poro} to obtain $p_p$ while $\dot{\bm u}_p$ and $\dot{\bm w}_p$ are directly inferred from \eqref{eq:teta-metodo}.
From the velocity plots, cf. Figures \ref{fig:caseA} and \ref{fig:caseB} (left), we observe that the fluid is driven into the poroelastic medium due to zero pressure
at the bottom, which simulates gravity. 

\begin{figure}
    \centering
    \includegraphics[width=0.45\linewidth]{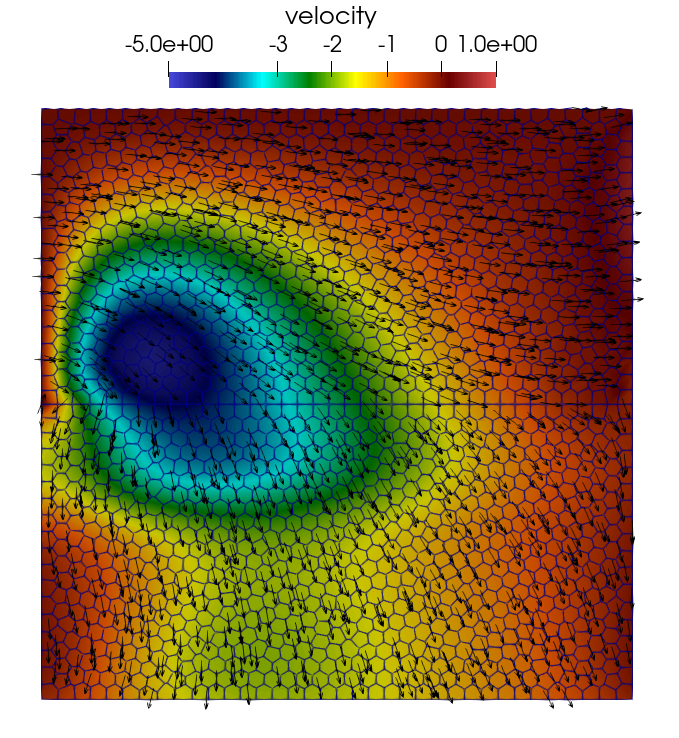}
    \includegraphics[width=0.45\linewidth]{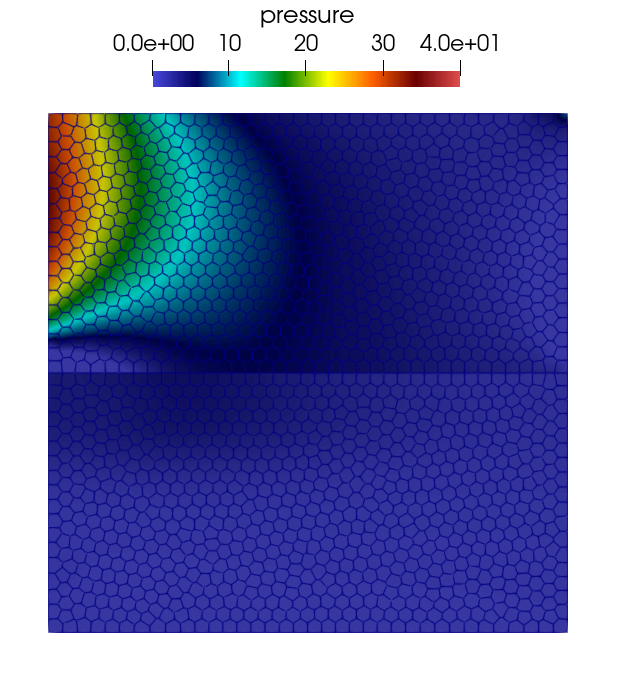}    
    \caption{Test case 3: set A. Computed
solutions at final time $T = 1.5~s$. Left: velocities $\bm u_f$ and $\dot{\bm u}_p + \dot{\bm w}_p$ (arrows), $\bm u_{f,2}$ and $\dot{\bm u}_{p,2} + \dot{\bm w}_{p,2}$
(color). Right: computed pressures $p_f$ and $p_p$.}
    \label{fig:caseA}
\end{figure}

The mass conservation $(\alpha \dot{\bm{u}}_p + \dot{\bm{w}}_p) \cdot \bm{n}_p  = \bm{u}_f \cdot \bm{n}_p$ on the interface with $\bm n_p = (0, 1)^T$ indicates continuity of second components of these two velocity vectors, which is observed from the color plot of the velocity, see cf. Figures \ref{fig:caseA} and \ref{fig:caseB} (left). 
We observe large values for the fluid pressure near the left boundary, which is due to the inflow
condition. A discontinuity close to the left lower corner $(0,0)$ appears due to the mismatch in inflow boundary conditions between the fluid and poroelastic regions.
These results are in agreement with \cite{LiYotov2022}.

\begin{figure}
    \centering
    \includegraphics[width=0.45\linewidth]{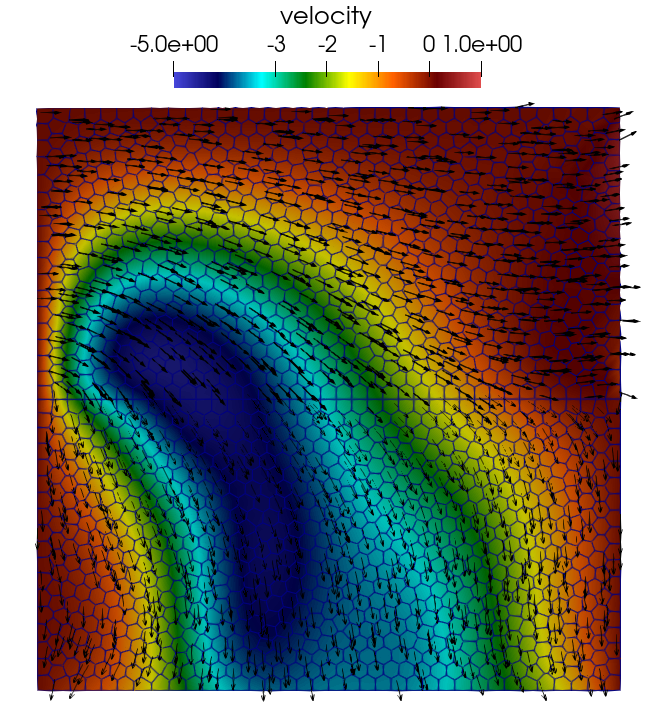}
    \includegraphics[width=0.45\linewidth]{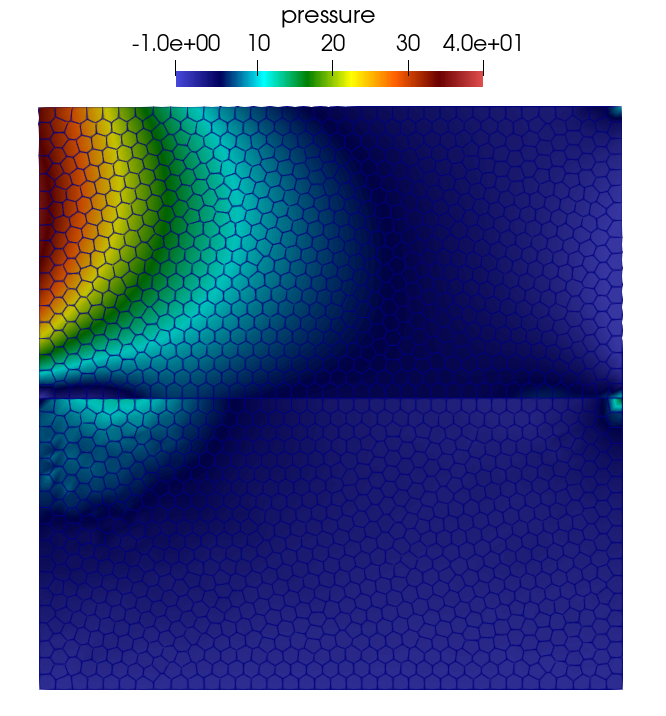}    
    \caption{Test case 3: set B. Computed
solutions at final time $T = 1.5~s$. Left: velocities $\bm u_f$ and $\dot{\bm u}_p + \dot{\bm w}_p$ (arrows), $\bm u_{f,2}$ and $\dot{\bm u}_{p,2} + \dot{\bm w}_{p,2}$
(color). Right: computed pressures $p_f$ and $p_p$.}
    \label{fig:caseB}
\end{figure}

For the set B,  the model problem exhibits both locking regimes for poroelasticity: (i) small
permeability and storativity and (ii) almost incompressible material as observed in \cite{SONYOUNG2017}. In particular, the Poisson’s ratio $\nu = \frac{\lambda_p}{2(\lambda_p + \nu_p)} = 0.4999995$. The computed solution does not exhibit locking or oscillations. The behavior is qualitatively similar to set A, with larger fluid and 
poroelastic pressure, see Figure ~\ref{fig:caseB} (right).

\section{Conclusions}\label{sec:conc}
This study has presented a comprehensive numerical analysis of a polygonal discontinuous Galerkin scheme for simulating fluid exchange between a deformable, saturated poroelastic structure and an adjacent free-flow channel. The investigation specifically addressed wave phenomena governed by the low-frequency Biot model in the poroelastic region and unsteady Stokes flow in the free-flow domain. Transmission conditions enforce conservation laws, achieving coupling at the interface between the two regions and ensuring robust interaction between the subsystems. Spatial discretization relied on the two-displacement weak form of the poroelasticity system and a stress-based formulation of the Stokes equation with weakly imposed symmetry. A thorough stability analysis of the proposed semi-discrete formulation was conducted, confirming the robustness of the method. Furthermore, a-priori $hp$-error estimates were derived, providing theoretical guarantees on the accuracy and convergence of the numerical scheme. These findings establish a solid foundation for the reliable and efficient simulation of coupled poroelastic and fluid-flow systems using advanced DG methods as has been shown in the numerical examples.

%\section*{Appendix}

%\begin{lemma}\label{lemma:sigma_l2}
%Let $\boldsymbol{\sigma}_h \in \bm S_h^f$. Then, it holds
%\begin{equation*}
%\left\|\boldsymbol{\sigma}_h\right\|^2_{\Omega_f} \lesssim \left\|\mu^{-\frac12}\boldsymbol{{\rm dev}} (\boldsymbol{\sigma}_h)\right\|^2_{\Omega_f} +|\boldsymbol{\sigma}_h|_{\rm dG,f}^2.
%    \end{equation*}
%\end{lemma} 
%\begin{proof}
%    See \cite[Lemma 4]{cancrini2024}
%\end{proof}

\section*{Acknowledgements}
IF and IM have been partially supported by ICSC—Centro Nazionale di Ricerca in High Performance Computing, Big Data, and Quantum Computing funded by European Union—NextGenerationEU. 
The present research is part of the activities of ``Dipartimento di Eccelllenza 2023-2027". The authors are members of INdAM-GNCS.

\section*{Declarations}

\textbf{Conflict of interest/Competing interests} 
The authors have no conflicts of interest to declare that are relevant to the content of this article. 
\textbf{Data availability}. The datasets generated during the current study are available from Ilario Mazzieri on reasonable request.
%\textbf{Code availability}.  All tests have been carried out through the  \textsc{Matlab} code \texttt{lymph} \cite{antonietti2024lymphdiscontinuouspolytopalmethods}.
%\textbf{Author contribution}. 

\begin{appendices}

\section{Proof of inf-sup inequality \eqref{eq:infsup}}\label{sec:inf-sup}

% An appendix contains supplementary information that is not an essential part of the text itself but which may be helpful in providing a more comprehensive understanding of the research problem or it is information that is too cumbersome to be included in the body of the paper.

We first observe that \eqref{eq:infsup} is equivalent to be able to find, for each $\boldsymbol{\lambda}_h\in\boldsymbol{\Lambda}_h^f$ a $\boldsymbol{\tau}_h\in\boldsymbol{S}_h^f$ such that
\begin{equation}\label{eq:rewrite-infsup}
\mathcal B^f(\boldsymbol{\lambda}_h,\boldsymbol{\tau}_h)=\|\boldsymbol{\lambda}_h\|_{\Omega_f}^2
\qquad\text{ and }\qquad
\|\boldsymbol{\tau}_h\|_{\star}\lesssim \|\boldsymbol{\lambda}_h\|_{\Omega_f},
\end{equation}
where
\[
\|\boldsymbol{\tau}\|_\star \overset{\rm def}{=}
\|\boldsymbol{\tau}\|_{\text{E}_f}+\displaystyle\sum_{\kappa\in\mathcal{T}_{h,p}^I}p_{p,\kappa}h_\kappa^{-1/2}\|\boldsymbol{\tau}\|_{\partial\kappa\cap \Gamma_I}.
\]
We thus construct such a $\boldsymbol{\tau}_h$ by extending the analysis of \cite{boffi2009reduced}, to include the interface terms %$\|\delta^{-1/2}\boldsymbol{\tau}_h{\bm n}_p\cdot{\bm t}_p\|_{\Gamma_I}$ of the norm $\|\boldsymbol{\tau}_h\|_{\text{E}_f}$.
of the norm $\|\boldsymbol{\tau}_h\|_\star$.
% is accounted for, since boundary terms are not included in the results of \cite{boffi2009reduced}.
This construction is carried on considering
\begin{itemize}
    \item the two-dimensional case $\Omega_f\subset \mathbb R^2$;
    \item the fact that our discontinuous space ${\bm S}_h^f\times\boldsymbol{\Lambda}_h^f$ includes the Amara-Thomas space \cite{amara1979equilibrium}.
\end{itemize}
The extension to the three-dimensional case is not trivial: as indicated in \cite{boffi2009reduced}, a more complex or completely alternative approach should be considered, and also a different auxiliary finite element space.

\bigskip

This proof relies on the following property, which is
%properties, which are
verified if we take ${\boldsymbol{\Psi}}={\bm H}^1_{0,\Gamma_I}(\Omega)$, ${\boldsymbol{\Psi}}_h$ is one of the finite element spaces considered in \cite{boffi2009reduced}, and $\mathfrak{S}_h^f$ is the tensor space associated to it:
% \begin{equation}\label{eq:hp-infsup}\begin{gathered}
% \exists{\boldsymbol{\Psi}}_h\subset{\boldsymbol{\Psi}} \text{ s.t. }\inf_{{\bm v}_h\in{\boldsymbol{\Psi}}_h}\sup_{\boldsymbol{\tau}_h\in{\bm S}_h^f} \frac{(\nabla\cdot\boldsymbol{\tau}_h,{\bm v}_h)_{\Omega_f}}{\|\boldsymbol{\tau}_h\|_{\text{E}_f}\|{\bm v}_h\|_{\boldsymbol{\Psi}}},
% \\
% \{\boldsymbol{\tau}_h\in{\bm S}_h^f\colon (\nabla\cdot\boldsymbol{\tau}_h,{\bm v}_h)_{\Omega_f}=0\ \forall {\bm v}_h\in{\boldsymbol{\Psi}}_h\}
% \subset
% \{\boldsymbol{\tau}\colon \nabla\cdot\boldsymbol{\tau}={\bm 0}\}.
% \end{gathered}\end{equation}
\begin{equation}\label{eq:hp-infsup}
\forall \boldsymbol{\tau}\in{\bm S}^f\quad \exists\boldsymbol{\tau}_h^1\in{\mathfrak S}_h^f\text{ s.t. }
\begin{cases}
    (\nabla\cdot(\boldsymbol{\tau}-\boldsymbol{\tau}_h^1),{\bm v}_h)_{\Omega_f} = 0\quad \forall{\bm v}_h\in{\boldsymbol{\Psi}}_h\subset{\boldsymbol{\Psi}},\\
    % \|\boldsymbol{\tau}_h^1\|_{\text{E}_f}\lesssim\|\boldsymbol{\tau}\|_{\text{E}_f}.
    \|\boldsymbol{\tau}_h^1\|_{\bm S^f}\lesssim\|\boldsymbol{\tau}\|_{\bm S^f},
\end{cases}
\end{equation}
where $\|\boldsymbol{\tau}\|_{\bm S^f}^2=\|(2\mu_f)^{-1/2}\text{dev}(\boldsymbol{\tau})\|_{\Omega_f}^2+\|\rho_f^{-1/2}\nabla\cdot\boldsymbol{\tau}\|_{\Omega_f}^2$.

Following the proof of \cite[Proposition 2]{boffi2009reduced}, we introduce a discrete space $\mathfrak{L}_h^f$ approximating $\boldsymbol{\Lambda}^f$ and for a fixed $\boldsymbol{\lambda}_h\in\mathfrak{L}_h^f$ we can build up a continuous tensor $\boldsymbol{\tau}\in{\bm S}^f$ such that
\begin{equation}\label{eq:cont-rewrite-infsup}
\mathcal B^f(\boldsymbol{\lambda}_h,\boldsymbol{\tau})=\|\boldsymbol{\lambda}_h\|_{\Omega_f}^2
\qquad\text{ and }\qquad
% \|\boldsymbol{\tau}\|_{E_f}\lesssim \|\boldsymbol{\lambda}_h\|_{\Omega_f}.
\|\boldsymbol{\tau}\|_{\bm S^f}\lesssim \|\boldsymbol{\lambda}_h\|_{\Omega_f}.
\end{equation}
This tensor is defined as $\overline{\boldsymbol{\tau}} = \zeta(\boldsymbol{\psi}):=\begin{bmatrix}
    -\partial_2\psi_1 & \partial_1\psi_1\\-\partial_2\psi_2 & \partial_1\psi_2
\end{bmatrix}$, where $\boldsymbol{\psi}\in {\boldsymbol{\Psi}}$ is the velocity component of the solution to the following Stokes problem:
\[\begin{cases}
\mathfrak{a}(\boldsymbol{\psi},\boldsymbol{\varphi}) + (p,\nabla\cdot\boldsymbol{\varphi})_{\Omega_f}=0
& \forall\boldsymbol{\varphi}\in{\boldsymbol{\Psi}},\\
(q,\nabla\cdot\boldsymbol{\psi})_{\Omega_f} = (\mathfrak{s}(q),\boldsymbol{\lambda}_h)_{\Omega_f} & \forall q\in Q=L^2(\Omega_f),
\end{cases}\]
where
\[
\mathfrak{s}(q)=\begin{bmatrix}
    0 & q \\ -q & 0
\end{bmatrix},
\quad
\mathfrak{a}(\boldsymbol{\psi},\boldsymbol{\varphi}) = (2\mu_f\varepsilon(\boldsymbol{\psi}),\varepsilon(\boldsymbol{\varphi}))_{\Omega_f}. %+ \int_{\Gamma_I}\delta^{-1}\zeta(\boldsymbol{\psi})\colon({\bm t}_p\otimes{\bm n}_p)\ \zeta(\boldsymbol{\varphi})\colon({\bm t}_p\otimes{\bm n}_p).
\]
Indeed, $\mathfrak{a}$ is coercive over ${\boldsymbol{\Psi}}$, $\mathcal B^f(\boldsymbol{\lambda}_h,\overline{\boldsymbol{\tau}})=(\mathfrak{s}(\nabla\cdot\boldsymbol{\psi}), \boldsymbol{\lambda}_h)=\|\nabla\cdot\boldsymbol{\psi}\|_{\Omega_f}^2=\|\boldsymbol{\lambda}_h\|_{\Omega_f}^2$, and the continuity inequality in \eqref{eq:cont-rewrite-infsup} follows from 
% $\|\overline{\boldsymbol{\tau}}\|_{\text{E}_f}^2\lesssim\mathfrak{a}(\boldsymbol{\psi},\boldsymbol{\psi})$
$\|\overline{\boldsymbol{\tau}}\|_{\bm S^f}^2\lesssim\mathfrak{a}(\boldsymbol{\psi},\boldsymbol{\psi})$
and classical Stokes analysis.

Now, we construct a projection $\Pi_h:{\bm S}^f\to{\mathfrak{S}}_h^f$ such that $\overline{\boldsymbol{\tau}}_h=\Pi_h\overline{\boldsymbol{\tau}}$ satisfies \eqref{eq:rewrite-infsup}.
Taking a fixed $\boldsymbol{\tau}\in{\bm S}^f$, we define $\Pi_h\boldsymbol{\tau}=\boldsymbol{\tau}_h^1+\boldsymbol{\tau}_h^2$ as a sum of two terms.
The first one is the $\boldsymbol{\tau}_h^1$ corresponding to \eqref{eq:hp-infsup}.
The second one is defined as $\boldsymbol{\tau}_h^2=\zeta(\boldsymbol{\psi}_h)$, where $\boldsymbol{\psi}_h$ is the solution of the following discrete Stokes problem over an inf-sup stable pair of discrete spaces ${\boldsymbol{\Psi}}_h\times Q_h\subset {\boldsymbol{\Psi}}\times Q$:
\begin{equation}\label{eq:stokes-infsup}\begin{cases}
\mathfrak{a}(\boldsymbol{\psi}_h,\boldsymbol{\varphi}_h) + (p_h,\nabla\cdot\boldsymbol{\varphi}_h)_{\Omega_f}= 0%\int_{\Gamma_I}\delta^{-1}\zeta(\boldsymbol{\varphi}_h)\colon({\bm t}_p\otimes{\bm n}_p)\ (\overline{\boldsymbol{\tau}}-\boldsymbol{\tau}_h^1)\colon({\bm t}_p\otimes{\bm n}_p)
& \forall\boldsymbol{\varphi}_h\in{\boldsymbol{\Psi}}_h,\\
(q_h,\nabla\cdot\boldsymbol{\psi}_h)_{\Omega_f} =\mathcal B^f(\mathfrak{s}(q_h),\overline{\boldsymbol{\tau}}-\boldsymbol{\tau}_h^1) & \forall q_h\in Q_h.
\end{cases}\end{equation}

Since $\mathfrak{s}$ is a bijection between $Q_h$ and $\mathfrak{L}_h^f$,
we can denote by $\overline{q}_h$ the element of $Q_h$ such that $\mathfrak{s}(\overline{q}_h)=\boldsymbol{\lambda}_h$ and observe that $\mathcal B^f(\boldsymbol{\lambda}_h,\boldsymbol{\tau}_h^2) = (\mathfrak{s}(\overline{q}_h), \nabla\cdot\boldsymbol{\psi}_h) = \mathcal{B}^f(\boldsymbol{\lambda}_h,\overline{\boldsymbol{\tau}}-\boldsymbol{\tau}_h^1)$.
% Moreover, classical Stokes analysis yields $\|\boldsymbol{\tau}_h^2\|_{\text{E}_f}\lesssim\|\overline{\boldsymbol{\tau}}-\boldsymbol{\tau}_h^1\|_{\text{E}_f}$.
Moreover, classical Stokes analysis yields $\|\boldsymbol{\tau}_h^2\|_{\bm S^f}\lesssim\|\overline{\boldsymbol{\tau}}-\boldsymbol{\tau}_h^1\|_{\bm S^f}$.
Summarizing, we end up with a $\overline{\boldsymbol{\tau}}_h=\boldsymbol{\tau}_h^1+\boldsymbol{\tau}_h^2$ that satisfies 
\[
\mathcal{B}^f(\boldsymbol{\lambda}_h,\overline{\boldsymbol{\tau}}_h)=\|\boldsymbol{\lambda}_h\|_{\Omega_f}^2 \qquad \text{and}\qquad \|\overline{\boldsymbol{\tau}}_h\|_{\bm S^f}\lesssim\|\boldsymbol{\lambda}_h\|_{\Omega_f}.
\]
Now, $\overline{\boldsymbol{\tau}}_h$ is continuous over $\Omega_f$ by construction and $\overline{\boldsymbol{\tau}}_h|_{\Gamma_I}=\boldsymbol{0}$ because of the zero Dirichlet condition encoded in the spaces $\boldsymbol{\Psi}, \boldsymbol{\Psi}_h$ to which $\boldsymbol{\psi},\boldsymbol{\psi}_h$ belong, respectively.
Therefore $\|\overline{\boldsymbol{\tau}}_h\|_{\bm S^f}=\|\overline{\boldsymbol{\tau}}_h\|_{\rm E_f}$.
Finally, observing that the dG spaces considered in this work are such that $\bm S_h^f\supset\mathfrak{S}_h^f$ and $\boldsymbol{\Lambda}_h=\mathfrak{L}_h^f$, the proof is complete.
\\
\qed

%%=============================================%%
%% For submissions to Nature Portfolio Journals %%
%% please use the heading ``Extended Data''.   %%
%%=============================================%%

%%=============================================================%%
%% Sample for another appendix section			       %%
%%=============================================================%%

%% \section{Example of another appendix section}\label{secA2}%
%% Appendices may be used for helpful, supporting or essential material that would otherwise 
%% clutter, break up or be distracting to the text. Appendices can consist of sections, figures, 
%% tables and equations etc.

\end{appendices}

%\bibliographystyle{abbrv}
%\bibliography{sn-bibliography}

\end{document}